\documentclass[12pt]{amsart}
\usepackage{color,amscd,amsmath,amssymb,enumerate,stmaryrd}
\usepackage[all]{xy}   
\usepackage{charter}
\usepackage[unicode,colorlinks=true,linkcolor=black,citecolor=black]{hyperref}

\DeclareMathOperator{\Coker}{Coker}
\newcommand{\Comod}[1]{\mathbf{Comod}(#1)}
\newcommand{\Comodf}[1]{\mathbf{Comod}_{\mathbf f}(#1)}
\newcommand{\algebras}[1]{#1\text{-}\mathbf{Alg}}

\DeclareMathOperator{\Grp}{\bf{Grp}}
\DeclareMathOperator{\Hom}{Hom}
\DeclareMathOperator{\Img}{Im}

\DeclareMathOperator{\Ker}{Ker}

\DeclareMathOperator{\Spec}{Spec}

\DeclareMathOperator{\Tor}{Tor}
\DeclareMathOperator{\tf}{tf}

\newcommand{\oti}{\otimes}

\newcommand{\afr}{\mathfrak a}
\newcommand{\hh}{\mathfrak h}
\newcommand{\lfr}{\mathfrak l}
\newcommand{\mm}{\mathfrak m}

\newcommand{\pp}{\mathfrak p}
\newcommand{\qq}{\mathfrak q}
\newcommand{\uu}{\mathfrak u}

\newcommand{\g}[1]{\mathfrak{#1}}
\newcommand{\ovl}{\overline}
\renewcommand{\to}{\longrightarrow}
\newcommand{\bm}{\boldsymbol}

 \textwidth=16cm
\textheight=22cm
\oddsidemargin=0cm
\evensidemargin=0cm
\topmargin=0cm 
 
\newtheorem{thm}{Theorem}[section]
\newtheorem{lem}[thm]{Lemma}
\newtheorem{cor}[thm]{Corollary}
\newtheorem{prop}[thm]{Proposition}

\theoremstyle{definition}
\newtheorem{dfn}[thm]{Definition}
\newtheorem{question}[thm]{Question}

\newtheorem{setting}[thm]{Setting}
\newtheorem*{claim*}{Claim}
\newtheorem{ex}[thm]{Example}
\newtheorem{rmk}[thm]{Remark}

\parskip2pt  

\title[Fiber  criteria for flatness]{Fiber   criteria for flatness and homomorphisms of \\flat affine group schemes} 
\author[P.H. Hai]{Ph\`ung H\^o Hai}
\address[P.H.Hai]{Institute of Mathematics, Vietnam Academy of Science and Technology, 18 Hoang Quoc Viet, Hanoi, Vietnam}
\email{phung@math.ac.vn}
\author[H.D. Nguyen]{Hop D. Nguyen}
\address[H.D. Nguyen]{Institute of Mathematics, Vietnam Academy of Science and Technology, 18 Hoang Quoc Viet, Hanoi, Vietnam}
\email{ngdhop@gmail.com}
\author[J.P. dos Santos]{Jo\~ao Pedro   dos Santos}
\address[J.P.  dos Santos]{Institut Montpelli\'erain Alexander Grothendieck, Universit\'e de Montpellier,
Place Eug\`ene Bataillon, 34090 Montpellier, France}
\email{joao\_pedro.dos\_santos@yahoo.com}

\begin{document}
\date{\today} 

\begin{abstract}A very useful result concerning flatness  in Algebraic Geometry is EGA's ``fiber'' criterion.  
We propose similar fiber criteria to verify flatness of a module while  avoiding  ``finiteness'' assumptions. Motivated by a Tannakian  viewpoint (where the category of representations comes to the front),  we derive applications to the theory of affine and flat group schemes. 
\end{abstract}

\makeatletter
\@namedef{subjclassname@2020}{\textup{2020} Mathematics Subject Classification}
\makeatother
\keywords{Flat modules over commutative rings, group schemes}
\subjclass[2020]{13C11, 14L15, 14L17}

\maketitle

\section{Introduction}
A very useful result concerning flatness  in Algebraic Geometry is EGA's fiber    criterion for flatness \cite[$\mathrm{IV}_3$, 11.3.10]{ega}. (See also Lemma \ref{lem.08.09.21--1} for a simple statement.) This important result requires certain ``finiteness'' assumptions on the given objects, and it is the objective of the present paper to explore what can be said when the ``finiteness'' conditions are dropped. The interest behind this comes mainly  from the first and third named authors' works on Tannakian group schemes:  these   quite easily fail to be of finite type. But  the reader will have no difficulty in finding other instances where a more general result would be useful: classifying spaces, universal coverings, etc. 

Here is our first main result, which is also our first goal. 
\begin{thm}[= Theorem \ref{prop_fiber}]\label{main1_intro}
Let $R$ be a noetherian ring and   $f:R\to A $ be a flat  homomorphisms of rings. Let $M$ be an $A$-module.  Assume that   $M$ is flat over $R$ and 
\[\textup{(Fiber flatness)} \quad \text{for all $\mathfrak p\in {\rm Spec}\,  R$, the $A\oti_R \bm k({\mathfrak p})$-module $M\oti_R\bm k({\mathfrak p})$ is flat.}\] Then $M$ is flat over $A$.
\end{thm}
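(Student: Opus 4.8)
The plan is to reduce flatness to the vanishing of a single $\Tor$ module, localize that module over $R$ by flat base change, and then run an induction on $\dim R$ whose heart is an infinitesimal computation at the closed point. Recall that $M$ is flat over $A$ if and only if $\Tor_1^A(M,A/I)=0$ for every finitely generated ideal $I\subseteq A$. Fix such an $I$ and set $T:=\Tor_1^A(M,A/I)$, viewed as an $R$-module through $f$. Since a module vanishes precisely when all its localizations at primes of $R$ vanish, it suffices to show $T\oti_R R_\pp=0$ for every $\pp\in\Spec R$. As $R\to R_\pp$ is flat, base change for $\Tor$ gives $T\oti_R R_\pp\cong\Tor_1^{A_\pp}(M_\pp,A_\pp/IA_\pp)$ with $A_\pp=A\oti_R R_\pp$ and $M_\pp=M\oti_R R_\pp$; moreover $R_\pp\to A_\pp$ is flat, $M_\pp$ is $R_\pp$-flat, and the fibers of $R_\pp\to A_\pp$ are among those of $R\to A$, so all hypotheses pass to $R_\pp$. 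Hence I may assume $R$ is local noetherian with maximal ideal $\mm$ and residue field $\bm k:=\bm k(\mm)$, and I must show $T=0$.

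I now argue by induction on $\dim R$. For $\pp\subsetneq\mm$, localizing as above lands in $R_\pp$ with $\dim R_\pp<\dim R$, so by the inductive hypothesis $M_\pp$ is flat over $A_\pp$ and thus $T\oti_R R_\pp=0$; therefore $\supp_R T\subseteq\{\mm\}$, i.e. $T$ is $\mm$-power torsion. The key positive input is that $M/\mm^nM$ is flat over $A/\mm^nA$ for every $n$. To obtain this I apply the finiteness-free local criterion for the nilpotent ideal $J:=\mm A/\mm^nA$ of $B:=A/\mm^nA$ to $P:=M/\mm^nM$: one must check that $P/JP$ is flat over $B/J$ and that the natural maps $(J^i/J^{i+1})\oti_{B/J}(P/JP)\to J^iP/J^{i+1}P$ are isomorphisms. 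Here $B/J=A\oti_R\bm k$ and $P/JP=M\oti_R\bm k$, which is flat by fiber flatness at $\mm$; and since $A$ and $M$ are $R$-flat, $J^i/J^{i+1}$ and $J^iP/J^{i+1}P$ are obtained from $\mm^i/\mm^{i+1}$ by extension of scalars to $A\oti_R\bm k$ and $M\oti_R\bm k$ respectively, so the comparison maps are isomorphisms. This also disposes of the base case $\dim R=0$, where $\mm$ is nilpotent and the criterion yields $A$-flatness of $M$ outright.

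It remains to upgrade ``$M/\mm^nM$ flat over $A/\mm^nA$ for all $n$'' to the vanishing of the $\mm$-power-torsion module $T$, and I expect this to be the main obstacle. The naive routes are blocked: without finiteness Nakayama is unavailable (an $\mm$-power-torsion module may satisfy $T=\mm T$ while being nonzero), and $M$ need not be $\mm$-adically ideally separated, so the classical $\mm$-adic local criterion does not apply. My plan is instead to prove directly that $I\oti_A M\to M$ is injective, by comparing it modulo $\mm^n$ with the injective maps $IA_n\oti_{A_n}M_n\to M_n$ furnished by the infinitesimal flatness (with $A_n=A/\mm^nA$, $M_n=M/\mm^nM$), and to control the discrepancy among $\mm^n(I\oti_A M)$, $I\cap\mm^nA$ and $\mm^nI$. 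This is exactly where the noetherian hypothesis on $R$ must enter in a finiteness-free way: although $A$ itself need not be noetherian, the ideals $\mm^n\subseteq R$ are finitely generated, so Artin--Rees is available on the $R$-side, and combined with the $R$-flatness of $A$ and $M$ it should pin down these filtrations and force $T=0$.
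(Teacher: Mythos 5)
Your reductions are sound as far as they go: the passage to local $R$ via flat base change of $\Tor$, the induction on $\dim R$ giving $\supp_R T\subseteq\{\mathfrak m\}$, the base case, and the proof that $M/\mathfrak m^nM$ is flat over $A/\mathfrak m^nA$ (via Lemma \ref{local_flat}(i) and the graded comparison, which indeed only needs $R$-flatness of $A$ and $M$ plus fiber flatness at $\mathfrak m$) are all correct. But the step you yourself flag as the main obstacle is precisely where the theorem lives, and the route you sketch for it cannot be made to work. What infinitesimal flatness gives you is: for each $n$, the image of $T=\Tor_1^A(M,A/I)$ in $\bigl(I/(I\cap\mathfrak m^nA)\bigr)\otimes_AM$ vanishes, i.e. $T\subseteq\bigcap_n \mathrm{Im}\bigl((I\cap\mathfrak m^nA)\otimes_AM\to I\otimes_AM\bigr)$. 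To get $T=0$ from this you need two inputs, both unavailable. First, an Artin--Rees bound of the shape $I\cap\mathfrak m^nA\subseteq\mathfrak m^{n-c}I$: Artin--Rees over the noetherian ring $R$ applies only to finitely generated $R$-modules, and neither $A$ nor $I$ is finite over $R$ (nor is $A$ assumed noetherian), so ``Artin--Rees on the $R$-side'' has no purchase here. Second, even granting such a bound, you would only learn $T\subseteq\bigcap_k\mathfrak m^k(I\otimes_AM)$, and Krull intersection fails for non-finitely-generated modules; moreover $\mathfrak m$-power torsionness of $T$ does not rescue you, since a module can be $\mathfrak m$-power torsion and satisfy $\mathfrak m T=T$ (e.g. $k[[x]][x^{-1}]/k[[x]]$ over $R=k[[x]]$). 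This separatedness is exactly what the finiteness hypotheses of the classical local criterion exist to provide; having dropped them, a closed-fiber/$\mathfrak m$-adic strategy has no footing, and this is a genuine gap, not a technical loose end.

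The paper's proof of Theorem \ref{prop_fiber} avoids this wall by never localizing and never arguing $\mathfrak m$-adically: it fixes $R$ and filters the \emph{test module} $N$ instead of the $\Tor$ module. Over a domain quotient of $R$ it splits $N$ into its torsion part --- written as a direct limit of modules killed by nonzero ideals, then moved down to $R/\mathfrak p$, $A/\mathfrak pA$, $M/\mathfrak pM$ via the isomorphism $\Tor_i^A(M,N)\simeq\Tor_i^{A/\mathfrak pA}(M/\mathfrak pM,N)$ (which uses $R$-flatness of $A$ and $M$) and handled by induction on dimension --- and its torsionfree part, which embeds into $N_K=N\otimes_RK$; there the \emph{generic} fiber flatness gives $\Tor_i^A(M,N_K)=\Tor_i^{A_K}(M_K,N_K)=0$, and the error term $N_K/N$ is torsion. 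In other words, the role your plan assigns to the closed fiber plus completion is played in the paper by the generic fiber plus localization of the test module; that is the missing idea. If you try to salvage your skeleton by decomposing $A/I$ this way after your reduction, the localization and support argument become superfluous and you are simply running the paper's proof.
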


It should be noticed   that in contrast to EGA's fiber criterion (see Lemma \ref{lem.08.09.21--1}) --- which requires flatness at one single fiber --- here we make use of flatness for {\it all}
fibers. Moreover, in Theorem \ref{main1_intro}, we also require flatness of   the map $f:R\to A$ (unnecessary for the statement in \cite{ega}); this time, it is possible to replace this assumption by something finer, leading us to our second main result.

\begin{thm}[= Corollary \ref{cor_fibercriterion_prime}]\label{main2_intro}
Let $R$ be a noetherian ring and  $f:R\to A$ a ring map. Let $M$ be an $A$-module. For any $\pp\in \Spec\,R$, let $t(A/\pp A)$ be the torsion submodule of the $(R/\pp)$-module $A/\pp A$. Then $M$ is a flat $A$-module if and only if for every  $\pp \in \Spec\,R$, the following conditions hold:
\begin{enumerate}[\quad \rm (i)]
 \item \textup{(Tor vanishing over primes)}  $\Tor^A_1(A/\pp A,M)=$ $\Tor^A_1\left(\dfrac{A/\pp A}{t(A/\pp A)},M\right)=0$;
 \item \textup{(Fiber flatness)} The $\left(A\oti_R {\boldsymbol k}(\pp)\right)$-module $M\oti_R {\boldsymbol k}(\pp)$ is  flat.
\end{enumerate}
\end{thm}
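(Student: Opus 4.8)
The forward implication is straightforward. If $M$ is flat over $A$, then $\Tor_1^A(N,M)=0$ for every $A$-module $N$; since both $A/\pp A$ and $(A/\pp A)/t(A/\pp A)$ are $A$-modules (the torsion submodule $t(A/\pp A)$ is stable under $A$, because $A$ acts $R/\pp$-linearly on $A/\pp A$), condition (i) holds for every $\pp$. For (ii) one writes $M\oti_R\bm k(\pp)=M\oti_A(A\oti_R\bm k(\pp))$ and uses that the base change of a flat module along $A\to A\oti_R\bm k(\pp)$ is again flat. Thus the whole content lies in the converse, which the plan is to deduce from Theorem \ref{prop_fiber}.

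For the converse the difficulty is that Theorem \ref{prop_fiber} presupposes that $f$ and $M$ are flat over $R$, whereas here only (i) and (ii) are available; the idea is that (i) is exactly what lets one bypass these two flatness hypotheses. First I would localise: flatness of $M$ over $A$ can be tested after replacing $(R,A,M)$ by $(R_\pp,A_\pp,M_\pp)$ for each $\pp\in\Spec R$, and both (i) and (ii) are stable under this operation, so one may assume $R$ is noetherian local. The algebraic heart is the short exact sequence of $A$-modules
\[
0\to t(A/\pp A)\to A/\pp A\to \ovl A_\pp\to 0,\qquad \ovl A_\pp:=\frac{A/\pp A}{t(A/\pp A)},
\]
where $\ovl A_\pp$ is torsion-free over the domain $R/\pp$ and therefore embeds into the generic fibre $A\oti_R\bm k(\pp)$. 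Feeding this into the long exact sequence for $\Tor^A_\bullet(-,M)$ and using the two vanishings in (i), the second one, $\Tor_1^A(\ovl A_\pp,M)=0$, forces $t(A/\pp A)\oti_A M\to(A/\pp A)\oti_A M$ to be injective, while the first controls the cyclic-type term $A/\pp A$ itself. This is what certifies that the $R/\pp$-torsion of $A$ and of $M$ is invisible to flatness over $A$, leaving only the torsion-free datum $\ovl A_\pp\hookrightarrow A\oti_R\bm k(\pp)$, on which the fibre-flatness hypothesis (ii) has genuine content.

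With the torsion handled fibrewise, I would run a dévissage over the noetherian scheme $\Spec R$ to pass from primes to arbitrary finitely generated ideals $\afr$: every $R/\afr$ carries a finite filtration with graded pieces $R/\pp_i$, and the long exact sequences for $\Tor^A_\bullet(-,M)$, together with the vanishings $\Tor_1^A(A/\pp_i A,M)=0$ from (i), propagate $\Tor_1^A(A/\afr A,M)=0$ to all such $\afr$. The correction terms created along the filtration by the non-exactness of $A\oti_R(-)$ are of the torsion type neutralised in the previous paragraph. Combined with the fibre flatness (ii), this reduces the verification to the situation governed by Theorem \ref{prop_fiber}: on the torsion-free quotients the functor $\Tor^A_\bullet(-,M)$ behaves as if $f$ and $M$ were $R$-flat, so the theorem yields flatness of $M$ over $A$ with no finiteness assumption on $A$ or $M$.

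The step I expect to be the main obstacle is the systematic control of the failure of $A$ to be $R$-flat. This failure destroys the base-change identity $\Tor^A_\bullet(A\oti_R N,M)\cong\Tor^R_\bullet(N,M)$ on which Theorem \ref{prop_fiber} implicitly rests, and it is responsible for the appearance of the torsion submodules $t(A/\pp A)$. Checking that the \emph{two} $\Tor$-vanishings of (i) --- rather than a single one --- are together strong enough to neutralise these error terms at every stage, while keeping everything compatible with the passage from fibres to the total space and free of any noetherian hypothesis on $A$, is the delicate point of the argument.
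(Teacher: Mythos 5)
Your forward implication is correct, but the converse --- which is the entire content of the statement --- has two genuine gaps. The first is the d\'evissage from primes to arbitrary ideals. A filtration of $R/\afr$ with graded pieces $R/\pp_i$ does not, after applying the right-exact functor $-\oti_R A$, yield a filtration of $A/\afr A$ with graded pieces $A/\pp_i A$: since $A$ is not assumed $R$-flat, the successive quotients are merely \emph{quotients} of $A/\pp_i A$, and the vanishing of $\Tor_1^A(A/\pp_i A,M)$ does not pass to quotients of $A/\pp_i A$. This cannot be absorbed into ``correction terms of torsion type''; the paper's proof of Theorem \ref{thm_fibercriterion_prime} needs a different mechanism, namely noetherian induction on quotients of $R$. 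Hypotheses (i)--(ii) pass to $R/\pp\to A/\pp A$ and $M/\pp M$, so the induction hypothesis gives that $M/\pp M$ is flat over \emph{all} of $A/\pp A$ for every nonzero prime $\pp$ --- a much stronger fact than your hypothesis (ii), which only concerns the generic fibre $A\oti_R\bm k(\pp)$. That flatness, combined with $\Tor_1^A(A/\pp A,M)=0$ and Lemma \ref{lem_localcriterion_generalform}, yields $\Tor_1^A(M,N)=0$ for \emph{every} $A$-module $N$ killed by $\pp$; the d\'evissage is then performed by filtering an arbitrary module $N$ killed by a product of primes (a class of modules closed under sub- and quotient modules), not by filtering $A/\afr A$. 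Your sketch contains no induction producing this fiberwise flatness, so the filtration argument cannot close.

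The second gap is the endgame. ``Reducing to the situation governed by Theorem \ref{prop_fiber}'' is not a legitimate step: the proof of that theorem uses $R$-flatness of $A$ and $M$ essentially (a flat $A$-resolution of $M$ stays a resolution after $-\oti_R R/\afr$, which is exactly what gives the base-change isomorphism $(\dagger)$ there), and the claim that on torsion-free quotients Tor ``behaves as if'' $f$ and $M$ were $R$-flat is precisely what needs proving. In the paper the logical dependence runs the other way: Theorem \ref{prop_fiber} is a special case of the fibre criterion being proved here (Remark \ref{rmk_torsionfree_hypothesis}). The actual treatment of a torsion-free module $N$ (Case 2, under (F3b), in the proof of Theorem \ref{thm_fibercriterion_strong}) is a self-contained argument: take a surjection $F_{\tf}\to N$ with $F$ free and kernel $U$, use $\Tor_1^A(A_{\tf},M)=0$ --- this is where the second vanishing in (i) enters irreplaceably, and the example in Section \ref{sect_counterexamples} shows the statement is false without it even when $M$ is $R$-flat --- together with the torsion case applied to $U_K/U$ and flatness of $M_K$ over $A_K$, to force $\Tor_1^A(M,N)=0$ by a diagram chase. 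None of this is contained in, or deducible from, Theorem \ref{prop_fiber}. In short, you identify the right torsion/torsion-free dichotomy, but the two ideas that make the hard direction work are missing.
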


It is important to observe that the hypothesis in Theorem \ref{main2_intro}
are weaker than those in Theorem \ref{main1_intro}, as shall be explained in the body of the paper. But we thought useful to deal with Theorem \ref{main1_intro} separately  in order to highlight  the simplicity of its statement and proof.

As mentioned at the beginning of this introduction, one reason for considering this sort of Commutative Algebra is motivated by its employment in the theory of {\it affine group schemes}. When dealing with these objects, it is a relevant question to ask how much a certain property of a morphism is determined by the functor  of  ``restriction of representations''.   In this direction, we can put forward the following.  

\begin{thm}[= Theorem \ref{surjective_criterion_group_hom}]
Let $R$ be a noetherian ring and $f:G'\to G$ be a homomorphism of flat affine group schemes over $R$. Denote by $f^*: \mathbf{Rep}_{\mathbf f} \,G  \to \mathbf{Rep}_{\mathbf f} \,G'$ the induced functor on categories of representations on finite $R$-modules. Then $f$ is  faithfully flat if and only if  $f^*$ is fully faithful and its essential image is stable under taking  subobjects.
\end{thm}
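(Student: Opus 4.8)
The plan is to pass to Hopf algebras: write $G=\Spec B$ and $G'=\Spec A$, so that $f$ corresponds to a homomorphism of Hopf $R$-algebras $\varphi\colon B\to A$ with $A,B$ flat over $R$; the functor $f^*$ sends a $B$-comodule $(M,\rho_M)$ to the $A$-comodule with coaction $(1_M\oti\varphi)\circ\rho_M$, and $f$ is faithfully flat precisely when $A$ is a faithfully flat $B$-module. Throughout I would use that, $R$ being noetherian and $B$ flat, every $B$-comodule is the filtered union of its finite $R$-submodule subcomodules, so that $\Comodf{B}=\mathbf{Rep}_{\mathbf f}\,G$ genuinely controls the whole comodule category.

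For the direct implication I would argue by faithfully flat descent. If $A$ is faithfully flat over $B$, then for every $B$-module $P$ the unit $P\to P\oti_B A$ is injective; taking $P=N\oti_R B$ shows that $1_N\oti\varphi\colon N\oti_R B\to N\oti_R A$ is injective for every $R$-module $N$. Fullness is then immediate: the condition for an $R$-linear $g\colon M\to N$ to be $A$-colinear is obtained from the condition for it to be $B$-colinear by post-composing with the injective map $1_N\oti\varphi$, so the two conditions coincide (faithfulness being clear from the underlying maps). For stability under subobjects, given an $A$-subcomodule $W\subseteq f^*(V)$, one combines the injectivity of $1_{V/W}\oti\varphi$ with the flatness of $B$ over $R$ (which identifies $\Ker\!\big(V\oti_R B\to(V/W)\oti_R B\big)$ with $W\oti_R B$) to conclude that $\rho_V(W)\subseteq W\oti_R B$; hence $W$ is already a $B$-subcomodule and lies in the essential image of $f^*$.

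The converse is the substantial direction, and here the fiber criteria do the work. Granting the categorical hypotheses, I must show that $A$ is faithfully flat over $B$. For flatness I would invoke Theorem~\ref{main1_intro} with the noetherian base $R$, the flat $R$-algebra $B$ in the role of the target ring, and $A$ (viewed as a $B$-module through $\varphi$) in the role of the module: since $A$ is flat over $R$, it remains only to verify \emph{fiber flatness}, that is, that for every $\pp\in\Spec R$ the map $B\oti_R\bm k(\pp)\to A\oti_R\bm k(\pp)$ is flat. But this is exactly the homomorphism $\mathcal O(G_{\bm k(\pp)})\to\mathcal O(G'_{\bm k(\pp)})$ attached to the fiber $f_{\bm k(\pp)}\colon G'_{\bm k(\pp)}\to G_{\bm k(\pp)}$ of affine group schemes over the field $\bm k(\pp)$; and over a field the present theorem is classical (Deligne--Milne), an injective homomorphism of commutative Hopf algebras being automatically faithfully flat. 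Thus, provided each $f_{\bm k(\pp)}$ again satisfies the categorical hypotheses, every fiber map is faithfully flat, in particular flat, and fiber flatness holds.

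The heart of the matter --- and the step I expect to be the main obstacle --- is therefore to descend the hypotheses on $f^*$ to the fiber functors $f_{\bm k(\pp)}^*$ on $\mathbf{Rep}_{\mathbf f}\,G_{\bm k(\pp)}$. The difficulty is that $-\oti_R\bm k(\pp)$ is not exact and that a representation, or a subrepresentation, of a fiber need not lift to one over $R$; reconciling this with full faithfulness and subobject-stability will require the local finiteness of comodules together with careful use of the flatness of $A$ and $B$ over $R$, and plausibly the sharper $\Tor$-vanishing form of the criterion, Theorem~\ref{main2_intro}. Once flatness of $A$ over $B$ is secured, faithful flatness follows by checking surjectivity of $\Spec A\to\Spec B$ fiberwise over $\Spec R$: each fiber map is faithfully flat, hence surjective on spectra, and the fibers cover $\Spec B$, so $A$ is faithfully flat over $B$ and $f$ is faithfully flat.
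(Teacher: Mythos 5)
Your forward implication is correct and is essentially the paper's: faithful flatness of $R[G]\to R[G']$ gives injectivity of $1_N\oti\varphi$ for every $R$-module $N$ (i.e.\ purity), and then your two diagram chases for fullness and for subobject-stability are the same ones the paper performs in the proof of Theorem \ref{flat_criterion_coalgebra_hom}. The converse, however, contains a genuine gap --- one you flag yourself --- and it sits exactly where the entire substance of the theorem lies: you never prove that full faithfulness and subobject-stability of $f^*$ descend to the fiber functors $f^*_{\bm k(\pp)}$. Without that, your appeals to the classical field case of Deligne--Milne and to Theorem \ref{main1_intro} have nothing to feed on; the difficulties you name (non-exactness of $-\oti_R\bm k(\pp)$, non-liftability of representations and subrepresentations of a fiber) are real, and no amount of ``careful use of flatness'' closes them in a single step, because for a general prime $\pp$ there is no direct way to pull subcomodules of $M\oti_R\bm k(\pp)$ back to subcomodules of $M$.

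The paper avoids this by never passing to $\bm k(\pp)$ in one jump. It reformulates faithful flatness as purity of $f^\sharp:R[G]\to R[G']$ (Theorem \ref{pure_flat}, combining Corollary \ref{cor_fiberpurecriterion_onebase}, Waterhouse's faithful-flatness theorem over fields, and Corollary \ref{cor_fiber_faithfullyflat}), and derives purity from the categorical hypotheses (Theorem \ref{flat_criterion_coalgebra_hom}) by induction on $\dim R$ after reducing to the case of a domain. The two reductions that replace your missing step are: (a) passage to a quotient $R/\pp$ is painless, because $\Comodf{L\oti R/\pp}$ is just the full subcategory of $\Comodf{L}$ of objects annihilated by $\pp$, so the hypotheses descend verbatim and the induction hypothesis applies; (b) passage to the fraction field $K$ of the domain $R$ is handled not by abstract base-change arguments but by Serre's model lemma (Lemma \ref{lem_serre}, from \cite{Se68}): every finite-dimensional $L\oti K$-comodule $V$ has a finite $R$-model $\mathcal V$, a given $L'\oti K$-subcomodule $W\subset V$ is compared with $\mathcal W=\mathcal V\cap W$, and fullness is transferred by clearing denominators. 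The composite of (a) and (b) reaches precisely the residue fields $\bm k(\pp)$ your outline needs, but only through the induction; to complete your proof you would have to replace ``descend the hypotheses to all fibers at once'' by this two-step reduction, at which point you will have reproduced the paper's argument (and the sharper Tor-vanishing criterion, Theorem \ref{main2_intro}, turns out not to be needed).
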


We now  summarize the contents of the remaining sections in the paper. In Section \ref{preliminar},  we recall some basic facts concerning the original fiber-by-fiber flatness criterion (cf. Section \ref{preliminar_classical}) and the rudiments of the theory of affine group schemes (cf. Section \ref{prelimina_group_schemes}). These summaries shall render the text more self-contained. They may also be of use to readers having a background in  Commutative Algebra, but less experience in the these more specific themes. Then,   in Section \ref{sect_criteria_flat}, we concentrate on a direct proof of Theorem \ref{main1_intro} by making decisive use of the $R$-flatness assumption on the $A$-module $M$ in order to compute Tor modules. That  Theorem \ref{main1_intro} is less general than   Theorem \ref{main2_intro} is explained by Remark \ref{rmk_torsionfree_hypothesis}.  
Section \ref{sect_criteria_Tor} is dedicated to  Theorem \ref{main2_intro}. The key result is the slightly less general Theorem \ref{thm_fibercriterion_strong}.
   Section \ref{sect_purity} presents  criteria for purity of maps between modules employing the material already developed in the previous parts and with an eye towards applications in Section \ref{sect_applications}. 
Section \ref{sect_counterexamples} provides some counter-examples to show how sharp our hypothesis are. We then conclude the work  with   Section \ref{sect_applications}, which  discusses   morphisms of flat affine group schemes over an arbitrary  noetherian  base under the light of our fibre criteria for flatness.

We end this introduction by noting that Theorem \ref{main1_intro} was also recently discovered  in the work \cite{barthel-et-al23}; there, it is claimed that the method of proof is already visible in Expos\'e XVII in SGA 4.


\subsection*{Notations and conventions}
\begin{enumerate}[(1)]
\item All rings are commutative and unital, and all morphisms of rings are unital. 

\item If $A$ is a ring and $\g p$ is a prime ideal of $A$, we let $\bm k(\g p)$ stand for the quotient field of the domain  $A/\g p$.

\item The category of groups is denoted by $\Grp$. That of $R$-algebras is denoted by $\algebras R$.
  
\item Given a ring  $A$ and an    $A$-module  $M$, we let $\mathrm{ann}_A(M)$ stand for the ideal $\{a\in A\,:\,\text{$am=0$ for all $m\in M$}\}$. If $m\in M$, we write $\mathrm{ann}_A(m)=\{a\in A\,:\,\text{$am=0$}\}$. 
\item Given a ring  $A$,  an    $A$-module  $M$,  an ideal $\g a\subset A$ and a submodule $N\subset M$, we let $(N:\g a)_M$ be the $A$-submodule $\{m\in M\,:\,\g a\cdot m\in N\}$. If $\g a=aA$,   we then write $(N: a)_M$ instead of $(N:\g a)_M$. 
\end{enumerate}

\section{Preliminaries}\label{preliminar}

\subsection{Grothendieck's flatness criteria}\label{preliminar_classical}
\begin{setting}
\label{setting_10.08.21--1}
 Let $g:X\to S$, $h:Y\to S$, be morphisms of schemes and $f:X\to Y$ be an $S$-morphism (i.e. $h\circ f=g$). Let $x$ be a point of $X$, $y=f(x)$, $s=g(x)$ and let $\mathcal F$  be a quasi-coherent sheaf of $\mathcal O_X$-modules. 
\[
\xymatrix{
x\in X\ar[rr]^f\ar[rd]_g&&Y\ni y\ar[ld]\\
&s\in S.}
\]
The fiber of $g$, $h$  at the point $s\in S$ are the base change of these morphisms with respect to the morphism $s:\Spec\,\boldsymbol k(s)\to S,$ where $\boldsymbol  k(s)$ is the residue field of $s$. 
\[\xymatrix{\ar@{}[rd]|{\Box}
X_s\ar[r]\ar[d]_{f_s}&X\ar[d]^f\\
\Spec\,\boldsymbol  k(s)\ar[r]_-{s}&X.}\]
\end{setting}

We say that $\mathcal F$ is $f$-flat at $X$ if $\mathcal F_x$ -- the stalk of $\mathcal F$ at $x$ -- is a flat module over the local ring $\mathcal O_{Y,y}$ by means of $f$. 

\begin{thm}[Grothendieck's fiber criterion for flatness] \cite[$\mathrm{IV}_3$, 11.3.10]{ega} \label{ega_flatness}
Let us adopt the notations and conventions of Setting \ref{setting_10.08.21--1}. Assume that $\mathcal F_x\neq 0$
and one of the following conditions are satisfied:
\begin{enumerate}[\quad\rm(i)]
\item The schemes $S, X$ and $Y$ are locally noetherian and $\mathcal F$ is coherent;
\item The morphisms $g$ and $h$ are locally of finite presentation and $\mathcal F$ is of finite presentation.
\end{enumerate}  
Then the following are equivalent:
\begin{enumerate}
[\quad\rm(1)]\item The $\mathcal O_X$-module
$\mathcal F$ is $g$-flat at $x$ and $\mathcal F|_{X_s}$, its  pull-back to $X_s$,   is $f_s$-flat at $x$;
\item  The $\mathcal O_X$-module $\mathcal F$ is $f$-flat at $x$ and $h$ is flat at $y$.  
\end{enumerate}
\end{thm}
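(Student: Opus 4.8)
The plan is to reduce the scheme-theoretic statement to a local assertion in commutative algebra and to prove the latter with the local criterion of flatness together with a change-of-rings spectral sequence. Writing $R=\mathcal O_{S,s}$, $B=\mathcal O_{Y,y}$, $A=\mathcal O_{X,x}$, $M=\mathcal F_x$ and $k=\bm k(s)=R/\mm$, the morphisms $g,f,h$ induce local homomorphisms $R\to B\to A$. Under either finiteness hypothesis $M$ is a finite $A$-module with $M\ne 0$, and one identifies $\mathcal O_{Y_s,y}=B/\mm B=:\ovl B$ and the stalk of $\mathcal F|_{X_s}$ at $x$ with $M/\mm M=M\oti_R k=:\ovl M$. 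Condition (1) then reads ``$M$ is $R$-flat and $\ovl M$ is $\ovl B$-flat'', and condition (2) reads ``$B$ is $R$-flat and $M$ is $B$-flat''. In case (ii) I would first pass to the locally noetherian situation (i) by the usual spreading-out arguments, so that $R,B,A$ may all be assumed noetherian local.

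The implication $(2)\Rightarrow(1)$ is the easy one: transitivity of flatness gives that $M$ is $R$-flat, and $\ovl M=M\oti_B\ovl B$ is $\ovl B$-flat by base change. The whole difficulty is in $(1)\Rightarrow(2)$, for which the engine is the change-of-rings spectral sequence attached to $R\to B$ and the $B$-module $M$,
\[
E_2^{p,q}=\Tor_p^B\big(\Tor_q^R(B,k),M\big)\;\Longrightarrow\;\Tor_{p+q}^R(k,M),
\]
whose abutment vanishes in every positive total degree precisely because $M$ is $R$-flat.

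First I would deduce $B$-flatness of $M$. Since $M$ is finite over the noetherian local ring $A$ and $\mm B\subseteq\nn$, it is ideally separated for $\mm B$, so the local criterion of flatness applies over $B$: $M$ is $B$-flat once $\ovl M$ is $\ovl B$-flat (given) and $\Tor_1^B(\ovl B,M)=0$. But $\Tor_1^B(\ovl B,M)=\Tor_1^B(B\oti_R k,M)$ is exactly $E_2^{1,0}$, a spot that receives and emits no nonzero differential in a first-quadrant spectral sequence; hence it equals $E_\infty^{1,0}$, a subquotient of the vanishing degree-one abutment, and so is zero. Thus $M$ is $B$-flat.

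Knowing this, the spectral sequence collapses onto the column $p=0$ and yields $\Tor_q^R(B,k)\oti_B M\cong\Tor_q^R(k,M)$; in degree $q=1$ the right side is zero, so $T\oti_B M=0$ with $T:=\Tor_1^R(k,B)$ finite over $B$. Reducing modulo $\mm$ gives $\ovl T\oti_{\ovl B}\ovl M=0$ where $\ovl T=T\oti_R k$. Here $\ovl M$ is a nonzero finite flat module over the local ring $\ovl B$ --- nonzero because $M\ne0$ is $R$-flat, and faithfully flat because $\ovl M\ne\ovl{\nn}\,\ovl M$ by Nakayama over $\ovl A$ --- so faithful flatness forces $\ovl T=0$, and then Nakayama over $B$ (using $\mm B\subseteq\nn$) gives $T=0$. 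With $\Tor_1^R(k,B)=0$ and $\ovl B$ automatically flat over the field $k$, the local criterion of flatness over $R$ shows $B$ is $R$-flat, completing $(1)\Rightarrow(2)$. I expect the main obstacle to be exactly this last part: recovering flatness of the base $B$ over $R$ while $M$ is only finite over $A$ (not over $B$), which is what makes the faithfully flat descent through the fiber $\ovl M$, and the ideal-separated form of the local criterion, indispensable.
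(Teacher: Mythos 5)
Your proposal is correct, but note that the paper never actually proves this theorem: it is quoted from EGA, and the paper's discussion consists only of the reduction chain that you also sketch --- case (ii) is reduced to case (i) via \cite[$\mathrm{IV}_3$, 11.2.7]{ega}, and case (i) is restated as the local-ring statement of Lemma \ref{lem.08.09.21--1}, which is then attributed to EGA and said to be ``derived from'' the local flatness criterion (Lemma \ref{local_flat}) with no further detail. What you have done, therefore, is supply the proof that the paper delegates to EGA, and your argument is a clean version of the classical one: two applications of the local criterion --- first over $B$ with ideal $\mm B$ (legitimate because $M$ is finite over the noetherian local ring $A$ and $\mm B\cdot A$ lies in the maximal ideal of $A$, which is exactly hypothesis (ii) of Lemma \ref{local_flat}), then over $R$ with ideal $\mm$ applied to the $R$-module $B$ --- with the base-change spectral sequence
\[
E^2_{p,q}=\Tor^B_p\bigl(\Tor^R_q(B,k),M\bigr)\ \Longrightarrow\ \Tor^R_{p+q}(k,M)
\]
supplying the two needed Tor vanishings: $\Tor^B_1(\ovl B,M)=E^2_{1,0}=0$ by the corner argument, and, after collapse, $\Tor^R_1(k,B)\oti_B M=0$, whence $\Tor^R_1(k,B)=0$ by faithful flatness of $\ovl M$ over $\ovl B$; this last step is where $\mathcal F_x\neq 0$ enters, and you place it correctly. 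Three small slips, none fatal. First, $\ovl M$ is a finite module over $\ovl A$, not over $\ovl B$; this is harmless because your Nakayama argument $\ovl M\neq \ovl{\nn}\,\ovl M$ is run over $\ovl A$, and ``flat with nonzero closed fibre over a local ring implies faithfully flat'' requires no finiteness. Second, $\ovl M\neq 0$ follows from Nakayama over $A$ alone; the $R$-flatness of $M$ is irrelevant for that point. Third, $T=\Tor^R_1(k,B)$ is annihilated by $\mm$, so $\ovl T=T$ and the concluding Nakayama over $B$ is superfluous.
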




One remarkable feature of Theorem \ref{ega_flatness} is that  flatness of the morphism \emph{at the closed fiber} implies   flatness in a   neighborhood. This property holds only under a certain finiteness condition.

The proof of the claim under assumption (ii) is reduced to the assumption that $S$ is noetherian (\cite[$\mathrm{IV}_3$, 11.2.7]{ega}) and thus is a sub-case of assumption (i).  On the other hand, if $S, X$ and $Y$ are noetherian schemes, then the assumption of finite presentation on $g$ and $h$ can be dropped.
 The claim under assumption (i) amounts to the following lemma.
\begin{lem}\cite[$\mathrm{IV}_3$, 11.3.10.1]{ega}
\label{lem.08.09.21--1}
Let $\rho:R\to A$, $\sigma:A\to B$ be local homomorphisms of local noetherian rings and $M$ be a nonzero $B$-module of finite type. Let $k$ be the residue field of $R$. The following are equivalent:
\begin{enumerate}
[\quad\rm(1)]
\item The $R$-module $M$ is flat and the $A\oti_Rk$-module $M\oti_Rk$ is   flat;
\item The $A$-module $M$ is  flat and $A$ is $R$-flat.  
\end{enumerate} 
\end{lem}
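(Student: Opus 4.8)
The plan is to prove the two implications separately, the implication $(2)\Rightarrow(1)$ being essentially formal while $(1)\Rightarrow(2)$ carries all the weight. For $(2)\Rightarrow(1)$ I would simply invoke transitivity of flatness: if $M$ is $A$-flat and $A$ is $R$-flat then $M$ is $R$-flat; and since $M\otimes_Rk=M\otimes_A(A\otimes_Rk)$ is the base change of the flat $A$-module $M$ along $A\to A\otimes_Rk$, it is automatically flat over $A\otimes_Rk$. No finiteness or noetherian hypothesis intervenes here, and $M\ne0$ is not needed.

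The engine for $(1)\Rightarrow(2)$ is the local criterion for flatness. Writing $\ovl A=A\otimes_Rk=A/\mm_RA$ and $\ovl M=M\otimes_Rk=M/\mm_RM$, the first point I would record is that the finiteness hypothesis is precisely what makes the criterion applicable: since $\rho$ and $\sigma$ are local we have $\mm_RB\subseteq\mm_AB\subseteq\mm_B$, so $M$, being a finite module over the noetherian local ring $B$, is $\mm_RA$-adically ideal-separated when regarded as an $A$-module. The criterion then asserts that $M$ is $A$-flat as soon as $\Tor_1^A(\ovl A,M)=0$ and $\ovl M$ is flat over $\ovl A$. The second condition is exactly the fibre-flatness part of hypothesis (1), so everything is reduced to the single Tor-vanishing statement.

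To obtain $\Tor_1^A(\ovl A,M)=0$ I would use the Cartan--Eilenberg base-change spectral sequence
$$E^2_{p,q}=\Tor_p^A\bigl(M,\Tor_q^R(A,k)\bigr)\Longrightarrow \Tor_{p+q}^R(M,k).$$
Because $M$ is $R$-flat the abutment vanishes in every positive total degree, and the tail of the five-term exact sequence is a surjection $\Tor_1^R(M,k)\twoheadrightarrow E^2_{1,0}=\Tor_1^A(M,\ovl A)$; as the source is $0$, so is the target. The subtlety I want to emphasise is that \emph{no} flatness of $A$ over $R$ may be presumed at this stage. This is in fact the main obstacle: the naive route of computing $\Tor_1^A(\ovl A,M)$ by tensoring an $R$-free resolution of $k$ up to $A$ is illegitimate, since that complex remains exact only when $A$ is $R$-flat, which is precisely what we are trying to establish. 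The spectral sequence is what breaks this circularity, turning $R$-flatness of $M$ directly into the needed vanishing.

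It remains to deduce that $A$ is $R$-flat, and here the hypotheses $M\ne0$ and $M$ finite re-enter decisively. I would first check that the (now established) $A$-flat module $M$ is in fact \emph{faithfully} flat over $A$: since $\mm_AB\subseteq\mm_B$, Nakayama gives a surjection $M/\mm_AM\twoheadrightarrow M/\mm_BM\ne0$, so the nonzero flat $A$-module $M$ has nonzero closed fibre and is therefore faithfully flat over the local ring $A$. Next, tensoring an $R$-free resolution $F_\bullet\to k$ with $M$ over $A$ and commuting the now-exact functor $-\otimes_AM$ past homology yields $\Tor_n^R(k,A)\otimes_AM\cong\Tor_n^R(k,M)$; for $n=1$ the right-hand side is $0$ by $R$-flatness of $M$, whence $\Tor_1^R(k,A)\otimes_AM=0$ and thus $\Tor_1^R(k,A)=0$ by faithful flatness. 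A final application of the local criterion to the $R$-module $A$ (again ideal-separated, as $\mm_RA\subseteq\mm_A$) shows that $A$ is $R$-flat, completing $(1)\Rightarrow(2)$.
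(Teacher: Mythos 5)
Your proof is correct and follows essentially the route the paper points to: the paper states this lemma without proof, citing EGA and remarking that it is derived from the local criterion for flatness (Lemma \ref{local_flat}), and your argument is exactly such a derivation --- the local criterion applied twice, first to $M$ over $A$ with the ideal $\mathfrak{m}_R A$ (where finiteness of $M$ over $B$ and locality of $\rho,\sigma$ legitimize the criterion), then to $A$ over $R$, with faithful flatness of the nonzero finite module $M$ (via Nakayama) transferring the vanishing $\Tor_1^R(k,M)=0$ to $\Tor_1^R(k,A)=0$. The only inessential deviation is in the sub-step $\Tor_1^A(A\otimes_R k,M)=0$, where you invoke the base-change spectral sequence while the classical argument gets it more elementarily: $R$-flatness of $M$ makes the composite $\mathfrak{m}_R\otimes_R M \twoheadrightarrow (\mathfrak{m}_R A)\otimes_A M\to M$ injective, and surjectivity of the first arrow then forces injectivity of the second, which is the required Tor-vanishing.
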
 

This lemma, in turn, is derived from the following famous lemma, which shall also be used in the course of this work.

 \begin{lem}[Local criterion for flatness, {\cite[$0_{\mathrm{III}}$,~10.2]{ega},    \cite[Thm. 22.3]{Mat89}}]
 \label{local_flat}
  
Let $A$ be a ring with an ideal $\mathfrak a$ and  $M$ be an (arbitrary) module.  Assume that one of the following condition is satisfied:
\begin{enumerate}
[\quad\rm (i)] 
\item $\mathfrak a$ is nilpotent;
\item There exists an $A$-algebra $B$ which is noetherian, such that $\mathfrak aB$ lies in all maximal ideals of $B$ and that $M$ admits a structure of $B$-module of finite type compatible with that of $A$. 
\end{enumerate} 
 Then the following are equivalent:
\begin{enumerate}
[\quad\rm(1)]
\item  The $A$-module $M$ is flat;
\item The $A/\mathfrak a$-module $M/\mathfrak aM$ is flat and the natural (surjective) map
$$\gamma^{\mathfrak a}_{M/A}:{\rm gr}^{\mathfrak a}(A)\oti_{{\rm gr}^{\mathfrak a}_0(A)} {\rm gr}^{\mathfrak a}_0(M)\longrightarrow {\rm gr}^{\mathfrak a}(M)$$
is an isomorphism. Here, as usual,  
\[
{\rm gr}^\afr(M)=\bigoplus_{n=0}^\infty \frac{\afr ^n M}{\afr^{n+1} M}
\] denotes associated graded module of $M$ with respect to $\afr$, and 
\[
{\rm gr}^\afr_n(M)= \frac{\afr ^n M}{\afr^{n+1} M}
\] 
its   homogeneous component of degree $n$.
\item The $A/\mathfrak a$ module $M/\mathfrak aM$ is flat and the multiplication $\mathfrak a\oti_A M\to M$ is injective.
\item For all $n\geq 1$, the $A/\mathfrak a^n$-module $M/\mathfrak a^nM$ is flat.
 \end{enumerate}
\end{lem}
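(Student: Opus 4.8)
The plan is to prove the four conditions equivalent by isolating the purely homological content --- which needs no finiteness --- from the single implication where hypotheses (i) and (ii) are genuinely used. First I would record two preliminary reductions. On one hand, the long exact $\Tor$-sequence attached to $0\to\mathfrak{a}\to A\to A/\mathfrak{a}\to 0$ identifies $\Tor^A_1(A/\mathfrak{a},M)$ with $\ker(\mathfrak{a}\oti_A M\to M)$, so the injectivity required in (3) is nothing but the vanishing $\Tor^A_1(A/\mathfrak{a},M)=0$; granting the flatness of $M/\mathfrak{a}M$ over $A/\mathfrak{a}$ as a hypothesis common to both, conditions (2) and (3) are then seen to be equivalent by comparing $\gamma^{\mathfrak a}_{M/A}$ degree by degree with this kernel. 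On the other hand, the implication (1)$\Rightarrow$(4) is immediate, since $M/\mathfrak{a}^nM=M\oti_A A/\mathfrak{a}^n$ inherits flatness over $A/\mathfrak{a}^n$ by base change.

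The homological core is to establish (3)$\Rightarrow$(2)\,\&\,(4), together with the stronger statement that $\Tor^A_1(N,M)=0$ for every $A$-module $N$ annihilated by a power of $\mathfrak{a}$. I would prove this by d\'evissage: using the short exact sequences $0\to\mathfrak{a}^n/\mathfrak{a}^{n+1}\to A/\mathfrak{a}^{n+1}\to A/\mathfrak{a}^n\to 0$ and induction on $n$, the flatness of $M/\mathfrak{a}M$ over $A/\mathfrak{a}$ forces each graded piece $\mathfrak{a}^n/\mathfrak{a}^{n+1}\oti_{A/\mathfrak{a}}M/\mathfrak{a}M\to\mathfrak{a}^nM/\mathfrak{a}^{n+1}M$ to be an isomorphism, and simultaneously yields $\Tor^A_1(A/\mathfrak{a}^n,M)=0$ and flatness of $M/\mathfrak{a}^nM$ over $A/\mathfrak{a}^n$ for every $n$; a further filtration argument then extends the $\Tor$-vanishing to all $\mathfrak{a}$-power-torsion modules $N$. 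Crucially, no finiteness on $A$ or $M$ enters at this stage.

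It remains to prove (4)$\Rightarrow$(1), and this is where the dichotomy (i)/(ii) becomes indispensable. Under (i), if $\mathfrak{a}^N=0$ then $A/\mathfrak{a}^N=A$ and $M/\mathfrak{a}^NM=M$, so the case $n=N$ of (4) already reads ``$M$ is flat over $A$''; nothing more is needed. Under (ii), I would reduce flatness of $M$ to the vanishing $\Tor^A_1(A/\mathfrak{b},M)=0$ for all finitely generated ideals $\mathfrak{b}$, that is, to injectivity of $\mathfrak{b}\oti_A M\to M$. Its kernel $K$ is a finite $B$-module, since $\mathfrak{b}$ is finitely generated over $A$ and $M$ is finite over $B$; feeding the mod-$\mathfrak{a}^n$ information through the Artin--Rees lemma to control $\mathfrak{b}\cap\mathfrak{a}^n$, one shows the key estimate $K\subseteq\mathfrak{a}^n(\mathfrak{b}\oti_A M)$ for every $n$. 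The main obstacle is exactly this passage from ``flat modulo every power of $\mathfrak{a}$'' to honest flatness: it is closed off by Krull's intersection theorem, which applies because $\mathfrak{b}\oti_A M$ is a finite $B$-module and $\mathfrak{a}B$ lies in the Jacobson radical of the noetherian ring $B$, giving $\bigcap_n\mathfrak{a}^n(\mathfrak{b}\oti_A M)=0$ and hence $K=0$. This separation property is precisely what hypothesis (ii) is designed to supply, and its failure outside the noetherian/finite setting is what makes the finiteness assumption genuinely necessary.
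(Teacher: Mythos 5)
The paper itself does not prove this lemma --- it is quoted from EGA and Matsumura, and only the unconditional implication (3)$\Rightarrow$(4) is ever used (via Lemma \ref{lem_localcriterion_generalform}) --- so your attempt can only be measured against those classical proofs. Your second and third paragraphs do reproduce the standard route (d\'evissage, then Artin--Rees plus Krull intersection), but there are two genuine gaps.

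First, your opening claim that, granting flatness of $M/\mathfrak{a}M$ over $A/\mathfrak{a}$, conditions (2) and (3) ``are seen to be equivalent by comparing $\gamma^{\mathfrak a}_{M/A}$ degree by degree'' with $\ker(\mathfrak{a}\oti_A M\to M)$, filed among the facts needing no finiteness, is false in the direction (2)$\Rightarrow$(3). Take $A=\mathbb{Z}_p$, $\mathfrak{a}=(p)$, $M=\mathbb{Q}_p/\mathbb{Z}_p$: here $M/\mathfrak{a}M=0$ is flat and $\mathfrak{a}^nM=M$ for all $n$, so both source and target of $\gamma^{\mathfrak a}_{M/A}$ vanish and (2) holds, yet $\Tor_1^A(A/\mathfrak{a},M)\simeq \mathbb{Z}/p\neq 0$, so (3) fails (and likewise (4) holds while (3) fails, so this implication cannot be ``purely homological''). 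The graded comparison is blind to elements of $\ker(\mathfrak{a}\oti_AM\to M)$ lying in $\bigcap_n\mathfrak{a}^n(\mathfrak{a}\oti_AM)$; killing those is exactly what the separatedness supplied by (i)/(ii) is for. As a result, condition (2) is never correctly tied into your equivalence cycle: you prove (3)$\Rightarrow$(2) in paragraph two, but the converse your scheme relies on is unproven. The repair is to observe that the genuinely unconditional equivalence is (2)$\Leftrightarrow$(4) --- the graded data of $(A/\mathfrak{a}^n,\mathfrak{a}/\mathfrak{a}^n,M/\mathfrak{a}^nM)$ is the truncation in degrees $<n$ of that of $(A,\mathfrak{a},M)$, so the nilpotent case applies --- and then to run (2)$\Rightarrow$(4)$\Rightarrow$(1)$\Rightarrow$(3) under (i)/(ii).

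Second, in (4)$\Rightarrow$(1) under hypothesis (ii) you invoke Artin--Rees ``to control $\mathfrak{b}\cap\mathfrak{a}^n$''. These are ideals of $A$, and (ii) does not make $A$ noetherian --- only $B$ is. Your argument is exactly Matsumura's proof of his Theorem 22.3, whose case $(\beta)$ explicitly assumes $A$ noetherian (the appeal to Krull's theorem in $B$ is fine, as you say). Without noetherianity of $A$ the inclusion $\mathfrak{b}\cap\mathfrak{a}^n\subseteq \mathfrak{a}^{n-c}\mathfrak{b}$ is unavailable, and flatness of the $M/\mathfrak{a}^nM$ only yields $\ker(\mathfrak{b}\oti_AM\to M)\subseteq \mathrm{Im}\bigl((\mathfrak{b}\cap\mathfrak{a}^n)\oti_AM\to \mathfrak{b}\oti_AM\bigr)$, which need not lie in $\mathfrak{a}^m(\mathfrak{b}\oti_AM)$ for large $m$; so the key estimate, and hence the appeal to $\bigcap_m\mathfrak{a}^m(\mathfrak{b}\oti_AM)=0$, is not justified. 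Either add the hypothesis that $A$ is noetherian (which covers every use this paper makes of the lemma), or a different argument is needed for the statement in the generality printed here.
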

We notice that the equivalence of conditions $(3)$ and $(4)$ holds without any assumption on $M$ and $\mathfrak a$.
Therefore, we have the following lemma, which will be used later on.
\begin{lem}
\label{lem_localcriterion_generalform}
Let $A$ be a ring, ${\mathfrak a}$ an ideal and $M$ an $A$-module.  Assume that $M/{\mathfrak a}M$ is a flat $(A/{\mathfrak a})$-module, and that $\Tor^A_1(A/{\mathfrak a},M)=0$. Then for any $n\ge 2$, $M/{\mathfrak a}^nM$ is a flat $(A/{\mathfrak a}^n)$-module, and $\Tor^A_1(M,N)=0$ any $(A/{\mathfrak a}^n)$-module $N$.
\end{lem}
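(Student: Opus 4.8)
The plan is to recognize the two hypotheses as precisely condition (3) of Lemma \ref{local_flat} for the pair $(A,\mathfrak a)$. Indeed, tensoring the short exact sequence $0\to\mathfrak a\to A\to A/\mathfrak a\to 0$ with $M$ and using $\Tor^A_1(A,M)=0$ identifies $\Tor^A_1(A/\mathfrak a,M)$ with the kernel of the multiplication map $\mathfrak a\oti_A M\to M$; hence the assumption $\Tor^A_1(A/\mathfrak a,M)=0$ is equivalent to injectivity of that map. Since the equivalence of conditions (3) and (4) in Lemma \ref{local_flat} holds with no hypothesis on $M$ or $\mathfrak a$, I immediately obtain the first assertion: $M/\mathfrak a^nM$ is flat over $A/\mathfrak a^n$ for every $n\ge 1$.

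For the Tor-vanishing statement, my first step is an auxiliary claim valid for an arbitrary ideal $I$: if $M\oti_A(A/I)$ is flat over $A/I$ and $\Tor^A_1(A/I,M)=0$, then $\Tor^A_1(M,N)=0$ for every $A/I$-module $N$. To prove it I would choose a presentation $0\to K\to F\to N\to 0$ by a free $A/I$-module $F$ and run the long exact sequence for $\Tor^A(M,-)$. Because $\Tor$ commutes with direct sums, $\Tor^A_1(M,F)$ vanishes, so $\Tor^A_1(M,N)$ is the kernel of $M\oti_A K\to M\oti_A F$. Using the natural isomorphism $M\oti_A P\cong(M/IM)\oti_{A/I}P$ for $A/I$-modules $P$, this map is identified with the tensor of the injection $K\hookrightarrow F$ by the $(A/I)$-flat module $M/IM$; it is therefore injective and the claim follows.

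It then remains to verify the hypothesis $\Tor^A_1(A/\mathfrak a^n,M)=0$ needed to apply the auxiliary claim with $I=\mathfrak a^n$, and this bootstrapping from $\mathfrak a$ to its powers is the only delicate point. I would argue by induction on $n$, the case $n=1$ being given. For the inductive step I tensor $0\to\mathfrak a^{n-1}/\mathfrak a^n\to A/\mathfrak a^n\to A/\mathfrak a^{n-1}\to 0$ with $M$; the long exact sequence squeezes $\Tor^A_1(A/\mathfrak a^n,M)$ between $\Tor^A_1(\mathfrak a^{n-1}/\mathfrak a^n,M)$ and $\Tor^A_1(A/\mathfrak a^{n-1},M)$. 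The latter vanishes by induction, and the former vanishes because $\mathfrak a^{n-1}/\mathfrak a^n$ is annihilated by $\mathfrak a$, hence is an $A/\mathfrak a$-module, to which the auxiliary claim applies with $I=\mathfrak a$ (whose hypotheses are exactly those we started with). With $\Tor^A_1(A/\mathfrak a^n,M)=0$ established and $M/\mathfrak a^nM$ already known to be flat over $A/\mathfrak a^n$, the auxiliary claim with $I=\mathfrak a^n$ yields $\Tor^A_1(M,N)=0$ for all $A/\mathfrak a^n$-modules $N$, completing the proof; everything apart from the induction is a formal manipulation of long exact sequences.
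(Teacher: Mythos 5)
Your proposal is correct and takes essentially the same route as the paper: the first assertion via the unconditional implication $(3)\Longrightarrow(4)$ of Lemma \ref{local_flat}, and the Tor vanishing via a free presentation over the quotient ring (identifying $M\oti_A K\to M\oti_A F$ with the tensor of $K\hookrightarrow F$ by the flat module $M/IM$) together with induction on $n$ using the sequence $0\to\mathfrak a^{n-1}/\mathfrak a^n\to A/\mathfrak a^n\to A/\mathfrak a^{n-1}\to 0$. The only cosmetic difference is that you package the presentation argument as an auxiliary claim for an arbitrary ideal $I$, whereas the paper runs it for $\mathfrak a$ and then observes it applies verbatim to $\mathfrak a^n$.
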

\begin{proof} 
The first claim is the implication $(3)\Longrightarrow(4)$ in Lemma \ref{local_flat}. We show the second claim. For $n=1$, consider an exact sequence $0\to P\to F\to N\to 0$, where $F$ is a free $A/{\mathfrak a}$-module. 
Tensoring with $M$ and noticing that $\Tor^A_1(A/{\mathfrak a},M)=0$ we obtain an exact sequence
$$0\to \Tor^A_1(N,M)\to P\oti_AM\to F\oti_AM.$$
As $M/{\mathfrak a}M=M\oti_AA/{\mathfrak a}$ is $A/{\mathfrak a}$-flat, the map $P\oti_AM\to F\oti_AM$ is injective. Hence $ \Tor^A_1(N,M)=0$.
If we know that $\Tor^A_1(A/{\mathfrak a}^n,M)=0$ for any $n\geq 2$ then the discussion above will yield  $\Tor^A_1(M,N)=0$ any $(A/{\mathfrak a}^n)$-module $N$.
 But to show $\Tor^A_1(A/{\mathfrak a}^n,M)=0$, we only need to use induction on $n\ge 2$ and the exact sequence \[0=\Tor^A_1({\mathfrak a}^{n-1}/\g a^n,M) \to \Tor^A_1(A/{\mathfrak a}^n,M) \to \Tor^A_1(A/{\mathfrak a}^{n-1},M)\] (note that ${\mathfrak a}^{n-1}/{\mathfrak a}^n$ is annihilated by ${\mathfrak a}$).
\end{proof}

\subsection{Affine group schemes and Tannakian duality}\label{prelimina_group_schemes}

A much valuable  account of the theory of  affine group schemes can be found in \cite[Chapters 1 and 2]{Wat} and in \cite[Part I, 2.1--2.4]{jantzen}; the theory of representations of affine group schemes is masterfully presented as well in op.cit., see specially 
\cite[Ch. 3]{Wat} and \cite[Part I, 2.7--9]{jantzen}. With these references in mind, we make a brief presentation for the convenience of the reader. 

   Given a ring $R$ and an affine and flat  group scheme $G$ over $R$, we let $\mathbf{Rep}_{\mathbf f}\,G$ stand for the category of representations of $G$ on finite $R$-modules. (For the notion of representation, see \cite[Ch. 3]{Wat} and \cite[Part I, 2.7--9]{jantzen}.)

Let $R$ be a base ring. An \emph{affine group scheme} over $R$ is a pair $(G,m)$ with $G$ an affine scheme over $R$ and $m: G\times G \to G$ (the \emph{multiplication map}) a morphism of schemes over $R$ such that for all $R$-algebra $S$, the induced map $m(S): G(S)\times G(S) \to G(S)$ is a group structure on the set $G(S)$. (For the sake of readability, mention to $m$ will frequently be omitted.) Equivalently, an affine group scheme over $R$ is a functor $G: \algebras R \to \Grp$ whose  underlying set-valued functor is representable. 

A \emph{morphism} $(G',m') \to (G,m)$ of affine group schemes over $R$ is a morphism $f: G'\to G$ of $R$-schemes  which is compatible with the multiplication in the sense that for each $R$-algebra $S$, the map $G(S)\to G'(S)$ is a morphism of groups. Alternatively, we require the  following diagram to commute:
\[
\xymatrix{
G'\times G' \ar[r]^{f\times f} \ar[d]_{m'} & G \times G \ar[d]^{m} \\
G' \ar[r]_f & G. 
}
\]
Following tradition, given an affine group scheme over $R$, we let $R[G]$ stand for its ring of functions. The existence of multiplication $m:G\times G\to G$, a neutral element $e:\mathrm{Spec}\,R\to G$ and an ``inversion morphism'' $G\to G$ all reflect on properties of $R[G]$, which lead to the axioms of commutative  Hopf algebras, cf.  \cite[1.4]{Wat} and \cite[Part I, 2.3]{jantzen}. (We warn the reader that what \cite{Wat} calls a Hopf algebra is usually just called a {\it commutative} Hopf algebra. The reference \cite{jantzen} does make this distinction, see p. 21 in op.cit.)

\begin{ex}
The \emph{additive group} $\mathbf G_a$ (over $R$), whose underlying scheme is $\mathbf  A^1_R$, is the affine group scheme represented by the $R$-algebra $R[x]$. Thus for any $R$-algebra $S$, 
\[\begin{split}
\mathbf  G_a(S)&=\Hom_{\algebras R}(R[x],S)\\&=(S,+)\\&=\text{the additive group of $S$.}
\end{split}
\]
 The multiplication map $m: \mathbf  G_a \times \mathbf  G_a =\Spec(R[x]\oti_R R[x]) \to \mathbf  G_a=\Spec R[x]$ is induced by the map $R[x] \to R[x]\oti_R R[x], x\mapsto x\oti 1+ 1\oti x$. 
\end{ex}

\begin{ex}A very simple way of producing group schemes which are not necessarily of finite type is via ``multiplicative'' group schemes. Let $\Lambda$ be an abelian group and let $R[\Lambda]$ be the associated group algebra. Define $\mathbf{Diag}(\Lambda)$ as the affine scheme $\mathrm{Spec}\,R[\Lambda]$ so that $\mathbf{Diag}(\Lambda)(S)=\mathrm{Hom}_{\Grp}(\Lambda,S^\times)$. There is an obvious group structure on $\mathrm{Hom}_{\Grp}(\Lambda,S^\times)$ so that the multiplication morphism $\mathbf{Diag}(\Lambda)\times\mathbf{Diag}(\Lambda)\to\mathbf{Diag}(\Lambda)$ is given, on the level of algebras, by  $\sum r_\lambda\cdot\lambda\longmapsto\sum r_\lambda\cdot\lambda\otimes\lambda$.
\end{ex}

The theory of affine group schemes is an extension of the theory of linear algebraic groups and as such, a large part of it is dedicated to the study of ``linear'' representations, which we now set out to explain briefly. 

 Given an $R$-module $V$, let $\mathbf{GL}_V$ stand for the functor $\algebras R\to \Grp$ which associates to $S$ the group of $S$-linear endomorphisms of $S\otimes_RV$. In certain cases $\mathbf{GL}_V$ is representable by an affine group scheme, but this is not unconditionally  true. (The verification of the positive case can be found in  \cite[Part I, 2.2]{jantzen}, while counter-examples can be found in \cite{nitsure}.)
A linear representation of an affine group scheme  $G$ over $R$ is the data of an $R$-module $V$ plus a natural transformation of functors $G\to \mathbf{GL}_V$. Little effort is required to construct from these {\it the category of representations $\mathbf{Rep}\,G$}. (In \cite{jantzen} these are called $G$-modules.)

Now, the apparent amount of information   encoded in a natural transformation of functions   can be off-putting. For this reason, a more wieldy object is that of a {\it co-module}. 
These   are clearly explained in \cite{Wat} and \cite{jantzen}, and we content to give  peripheral explanations. 

So let $V$ be an $R$-module and $\rho:G\to \mathbf{GL}_V$ be a representation. We then have an isomorphism of $R[G]$-modules 
\[
V\otimes_RR[G] \to  V\otimes_RR[G]
\]
obtained from the image in 
$\mathbf{GL}_V(R[G])$ of $\mathrm{id}\in G(R[G])$. This, in turn, gives an $R$-linear map 
\[
\rho_V:V\to V\otimes_RR[G]. 
\]
The fact that $\rho(S)$ is a morphism of groups for each $S\in \algebras R$ then allows us to show that $\rho_V$ has to respect other axioms  (cf. \cite[3.2, Theorem]{Wat} or \cite[Part I, 2.8]{jantzen})  which then define, what is called, a {\it  co-action  of the Hopf algebra $R[G]$} on the $R$-module $V$. (In fact, the {\it algebra} structure of $R[G]$ plays no role and the axioms for a co-action only require the {\it co-algebra} structure of $R[G]$.) 
The data of an $R$-module plus a co-action defines a {\it co-module} over $R[G]$. It takes little effort to define what morphisms of co-modules are and we then obtain a category $\Comod {R[G]}$  of $R[G]$-comodules.

Summarizing, by starting with a representation $G\to\mathbf{GL}_V$, we obtain a structure of $R[G]$-comodule on $V$. It turns out that this association is functorial and we arrive at an equivalence  \[\mathbf{Rep}\,G\stackrel\sim\to\Comod{R[G]}.\]

When $G$ is a {\it flat} $R$-scheme, it is possible to endow the kernel and cokernel of an arrow of $\Comod {R[G]}$   with the structure of a co-module (cf. \cite[1.3]{Se68} or \cite[2.9]{jantzen}) in such a way that the category of representations becomes an abelian category $\mathbf{Rep}\,G$. If $R$ is in addition noetherian, then an even more pertinent category is $\mathbf{Rep}_{\mathbf f} \,G$, the full subcategory of $\mathbf{Rep} \,G$ whose objects underlie finite $R$-modules. 

Let us end this section by explaining the {\it categorical Tannakian philosophy}. In a nutshell, this philosophy says that the study of affine flat group schemes and morphisms between them is to be approached via the categories $\mathbf{Rep}$ or $\mathbf{Rep}_{\mathbf f}$. 
This philosophy has great success when $R$ is a field \cite{DM82} or a Dedekind domain \cite{DH18}.  

One of the questions addressed by the categorical Tannakian philosophy is the derivation of properties of a morphism of affine flat group schemes from the associated functor on representation categories. More precisely, let $f:G'\to G$ be a morphism of flat and affine group schemes and denote by \[f^*:\mathbf{Rep}_{\mathbf f}\,G\to\mathbf{Rep}_{\mathbf f}\,G'\]
the ``restriction'' functor \cite[Part I, 6.17]{jantzen}. What properties of $f$ can be derived from properties of $f^*$? We shall apply our results on pure morphisms  to study this question, see Section \ref{12.12.2023--1}.

\section{Fiber criteria for flatness}
\label{sect_criteria_flat}

The aim of this work is to relieve the finiteness assumption (noetherian or finite presentation assumption) in the above claims. Of course, one cannot have it for free. Our main assumption will be the flatness of morphisms at {\em all} fibers.

\begin{thm}[Fiber criterion of flatness]
\label{thm_fiber_geom}
Let us adopt the notations of  Setting \ref{setting_10.08.21--1}. 
Assume that
\begin{enumerate}[\quad\rm(i)]
\item the morphisms $g$ and $h$ are flat;
\item the base-change morphism $f_s$ is flat (resp. faithfully flat) for all points $s$ of $S$; and, 
\item the scheme $S$ is locally noetherian.
\end{enumerate}
 Then $f$ is flat (resp. faithfully flat).
\end{thm}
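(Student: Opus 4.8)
The plan is to deduce this geometric statement from the affine criterion of Theorem~\ref{prop_fiber} (= Theorem~\ref{main1_intro}) by passing to affine charts, and to handle the parenthetical ``faithfully flat'' variant by a separate surjectivity argument. Since flatness of a morphism of schemes is local on both source and target, I would first reduce to the affine situation. As $S$ is locally noetherian, cover it by affine noetherian opens $\Spec R_0$; cover each $h^{-1}(\Spec R_0)$ by affine opens $\Spec B$; and cover each $f^{-1}(\Spec B)$ by affine opens $\Spec A$. In such a chart one obtains ring maps $R_0\to B\to A$ in which $R_0$ is noetherian, $R_0\to B$ is (the restriction of) $h^\#$, the $B$-algebra $A$ carries the structure induced by $f^\#$, and the composite $R_0\to A$ equals $g^\#$. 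It suffices to show that $A$ is flat over $B$ in every such chart.

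To this end I would apply Theorem~\ref{prop_fiber} with its ``$R$'' equal to $R_0$, its ``$A$'' equal to $B$, and its module ``$M$'' equal to $A$ regarded as a $B$-module. The hypotheses are then checked directly: $R_0$ is noetherian by (iii); the map $R_0\to B$ is flat because $h$ is flat, by (i); the $R_0$-module $A$ is flat because $g$ is flat, again by (i). For the fiber-flatness hypothesis, a prime $\pp\in\Spec R_0$ is exactly a point $s$ of $S$ lying in the chart, with $\bm k(\pp)=\bm k(s)$; then $A\oti_{R_0}\bm k(s)$ and $B\oti_{R_0}\bm k(s)$ are the coordinate rings of the (chart-restricted) fibers $X_s$ and $Y_s$, and the induced map is a restriction of $f_s$, which is flat by (ii). Hence $A\oti_{R_0}\bm k(\pp)$ is flat over $B\oti_{R_0}\bm k(\pp)$, and Theorem~\ref{prop_fiber} gives that $A$ is $B$-flat. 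Running over all charts shows that $f$ is flat. I would emphasise that, because here ``$R$'' is the \emph{chart ring} $R_0$ rather than a local ring $\mathcal O_{S,s}$, the primes of $R_0$ are precisely the points of the chart, so hypothesis (ii) applies verbatim, with no need to track generizations.

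It remains to treat the ``faithfully flat'' assertion. Assuming each $f_s$ faithfully flat, we have already shown $f$ to be flat, so it is enough to prove $f$ surjective. Given $y\in Y$ with $s=h(y)$, let $\tilde y$ be the unique point of the fiber $Y_s$ lying over $y$ (its residue field is $\bm k(y)$). Using the identification $X_s=X\times_Y Y_s$, the fiber $f^{-1}(y)$ is isomorphic to $f_s^{-1}(\tilde y)$; since $f_s$ is surjective (being faithfully flat), $f_s^{-1}(\tilde y)\neq\varnothing$, whence $f^{-1}(y)\neq\varnothing$. As $y$ was arbitrary, $f$ is surjective and therefore faithfully flat.

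The reduction and the fiber computation are routine, and the entire substance of the theorem is concentrated in Theorem~\ref{prop_fiber}, whose direct proof is the real work. The only points demanding care are the compatible choice of affine charts with $R_0$ noetherian, and, in the faithfully flat case, the identification $f^{-1}(y)\cong f_s^{-1}(\tilde y)$ needed to import surjectivity from the geometric fibers.
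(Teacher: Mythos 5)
Your proposal is correct, and for the flatness assertion it coincides with the paper's own argument: the paper likewise observes that the claim is local, passes to affine charts over a noetherian affine piece of $S$, and invokes Theorem \ref{prop_fiber} with $R=R_0$, ``$A$''$=B$ and ``$M$''$=A$. Where you genuinely diverge is the faithfully flat case. The paper stays in commutative algebra and cites Corollary \ref{cor_fiber_faithfullyflat}, whose proof establishes faithful flatness of the module by checking $M\oti_A\bm k(\g m)\neq 0$ for every maximal ideal $\g m$, via the factorization $A\to A\oti_R\bm k(\g p)\to\bm k(\g m)$. You instead split faithful flatness into flatness plus surjectivity, and you import surjectivity from the geometric fibers through the identification $f^{-1}(y)\cong f_s^{-1}(\tilde y)$, where $\tilde y$ is the unique point of $Y_s$ over $y$ (your computation that $\tilde y$ exists, is unique, and has residue field $\bm k(y)$ is correct, as is the identification $X_s=X\times_Y Y_s$). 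Your route is arguably the more careful one at the level of the geometric statement: flatness is local on source and target, but surjectivity (hence faithful flatness) is not local on the source, so the paper's blanket phrase that ``the claim of the theorem is local'' really only covers the flat case cleanly; individual chart maps $\Spec A\to\Spec B$ need not be faithfully flat even when $f$ is, so one cannot simply quote the affine corollary chart by chart. What the paper's route buys in exchange is a stronger affine statement --- Corollary \ref{cor_fiber_faithfullyflat} asserts faithful flatness of a module, not merely of a ring map --- which is exactly the form needed later, for instance in the proof of Theorem \ref{pure_flat}. In short: same skeleton for flatness, and for faithful flatness your point-by-point surjectivity argument on $Y$ is a sound (and in fact tighter) replacement for the paper's appeal to the affine corollary.
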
  

The claim of theorem is local, so that we can assume that $X$,  $Y$ and $S$ are affine schemes. Hence, Theorem \ref{thm_fiber_geom} follows from Theorem \ref{prop_fiber} and Corollary \ref{cor_fiber_faithfullyflat}, which are in fact stronger claims.

\begin{thm} 
\label{prop_fiber} Let $R$ be a noetherian ring and   $f:R\to A $ be a flat  homomorphisms of rings. Let $M$ be an $A$-module.  Assume that   $M$ is flat over $R$ and 
\[\tag{FF}\text{for all $\mathfrak p\in {\rm Spec}\,  R$, the $A\oti_R \bm k({\mathfrak p})$-module $M\oti_R\bm k({\mathfrak p})$ is flat.}\] Then $M$ is flat over $A$.
\end{thm}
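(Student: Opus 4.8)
The plan is to prove that $\Tor_1^A(M,N)=0$ for every $A$-module $N$, which is equivalent to flatness of $M$ over $A$. The decisive tool, and the place where the $R$-flatness of $A$ and $M$ is used most essentially, is the following base-change vanishing: for every $R$-module $L$ and every $i\ge 1$ one has $\Tor_i^A(A\oti_R L,M)\cong \Tor_i^R(L,M)=0$. Indeed, choosing a free resolution $F_\bullet\to L$ over $R$ and tensoring with the flat $R$-algebra $A$ produces a free $A$-resolution of $A\oti_R L$ (its higher homology $\Tor^R_{>0}(L,A)$ vanishes since $A$ is $R$-flat); applying $-\oti_A M=-\oti_R M$ then computes $\Tor^R_\bullet(L,M)$, which vanishes in positive degrees because $M$ is $R$-flat. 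In particular $\Tor_i^A(A/JA,M)=0$ for every ideal $J\subseteq R$ and every $i\ge 1$.

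I would then argue by Noetherian induction on $R$. The hypotheses of Theorem \ref{prop_fiber} are inherited by every quotient $R/J$ (with $A/JA$ and $M/JM$ in place of $A$ and $M$: these are flat over the noetherian ring $R/J$, and the fibers are unchanged), so I may assume as inductive hypothesis that $M/JM$ is flat over $A/JA$ for every nonzero ideal $J\subseteq R$. First I reduce to $R$ reduced: if $\mathrm{nil}(R)\ne 0$, then $\mathrm{nil}(R)A$ is a nilpotent ideal of $A$, the module $M/\mathrm{nil}(R)M$ is $A/\mathrm{nil}(R)A$-flat by the inductive hypothesis, and $\Tor_1^A(A/\mathrm{nil}(R)A,M)=0$ by the base-change vanishing; Lemma \ref{local_flat}(i) (nilpotent case, equivalence of (1) and (3)) then yields that $M$ is flat over $A$.

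Assume now $R$ reduced, and let $S\subseteq R$ be the set of nonzerodivisors, so that $S^{-1}R=\prod_{i}\bm k(\pp_i)$ is the finite product of residue fields at the minimal primes $\pp_i$. The core step is to show $\Tor_i^A(M,N)=0$ for all $i\ge1$ whenever $N$ is $S$-torsion. For $N$ annihilated by a single $s\in S$ this holds for $i=1$ by Lemma \ref{lem_localcriterion_generalform} applied to $\mathfrak a=sA$, whose hypotheses — flatness of $M/sM$ over $A/sA$ (inductive hypothesis, as $sR\ne0$) and $\Tor_1^A(A/sA,M)=0$ (base-change vanishing) — are exactly what we have; for $i\ge 2$ it follows by a short dimension-shift induction, presenting $N$ by free $A/sA$-modules and again using the base-change vanishing. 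The general $S$-torsion case follows since $\Tor$ commutes with the filtered colimit of the finitely generated (hence singly annihilated) submodules of $N$. Finally I run the dévissage: for arbitrary $N$, the inclusion of its $S$-torsion submodule reduces us to $S$-torsion-free $N'$; the localization sequence $0\to N'\to S^{-1}N'\to Q\to0$ has $S$-torsion cokernel $Q$, so $\Tor_2^A(M,Q)=0$ forces an injection $\Tor_1^A(M,N')\hookrightarrow \Tor_1^A(M,S^{-1}N')\cong \Tor_1^{S^{-1}A}(S^{-1}M,S^{-1}N')$; and the target vanishes because $S^{-1}M=\prod_i M\oti_R\bm k(\pp_i)$ is flat over $S^{-1}A=\prod_i A\oti_R\bm k(\pp_i)$ by the fiber-flatness hypothesis (FF) at the minimal primes.

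The main obstacle is that $A$ is not assumed noetherian, so none of the usual reductions over $A$ — filtering a module by prime quotients, invoking associated primes of $A$, or reducing flatness to a finiteness-type criterion after localizing — are available. The induction must therefore be carried on the noetherian base $R$, and the only fibers entering the final step are the generic ones at the minimal primes; the remaining fibers are consumed silently through the inductive hypothesis applied to the quotients $R/J$. The delicate technical point is obtaining the higher-$\Tor$ vanishing on torsion modules needed to make the localization dévissage exact without any finiteness assumption on $A$; this is precisely what the base-change vanishing, combined with Lemmas \ref{local_flat} and \ref{lem_localcriterion_generalform}, is designed to supply.
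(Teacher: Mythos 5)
Your proof is correct. It shares the paper's overall philosophy --- induct on the base, split modules into torsion and torsion-free parts, and let fiber flatness enter only through generic points --- but the technical route is genuinely different. Your engine is flat base change for Tor, $\Tor_i^A(A\oti_RL,M)\cong\Tor_i^R(L,M)=0$, while the paper's Step 0 proves the companion isomorphism $\Tor_i^A(M,N)\cong\Tor_i^{A/\afr A}(M/\afr M,N)$ whenever an ideal $\afr\subset R$ kills $N$; either can substitute for the other, and in fact the paper's version would shorten your core step, since for $N$ killed by $s\in S$ it yields $\Tor_i^A(M,N)\cong\Tor_i^{A/sA}(M/sM,N)=0$ in all degrees at once from the inductive flatness of $M/sM$, making the appeal to Lemma \ref{lem_localcriterion_generalform} and your dimension shift unnecessary. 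Your induction is Noetherian induction on quotients of $R$, which treats all base rings uniformly: nilpotents are absorbed by the nilpotent case of Lemma \ref{local_flat}, and the reduced case is settled in one stroke through the total ring of fractions $S^{-1}R=\prod_i\bm k(\pp_i)$, invoking (FF) at all minimal primes simultaneously. The paper instead runs an induction on $\dim R$ for domains, using only the fraction field $K$ in its torsion-free case, and then reduces a general $R$ to the domain case by filtering along a finite sequence of primes with zero product; its proof of Theorem \ref{prop_fiber} is thereby self-contained and never calls on Lemmas \ref{local_flat} or \ref{lem_localcriterion_generalform}. What your arrangement buys is uniformity (no domain/non-domain dichotomy) and a clear identification of the mechanism as ordinary flat base change; what the paper's buys is an elementary, self-contained dimension induction. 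Both proofs consume the fibers in the same pattern: only the generic fibers of successive quotients of $R$ are ever used, exactly as you remark at the end.
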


\begin{proof}
We begin by noting that the condition (FF) is ``stable under quotients of $R$'' in the following sense: if $\g a\subset R$ is an ideal,  then (FF) holds   once we replace $R$ by $R/\g a$, $A$ by $A/\g aA$ and $M$ by $M/\g aM$. Obviously, flatness of $A/\g aA$ and of $M/\g aM$ over $R/\g a$ keep on holding as well. 

In order to prove the theorem, we need to   show that
\[
\tag{$*$}{\rm Tor}_i^A(M,N)=0, \text{\rm  for all } i\geq 1, 
\]
for an arbitrary $A$-module $N$.  
We notice that for an exact sequence $0\to N_1\to N_2\to N_3\to 0$, if $(*)$ holds for $N_1$ and $N_3$,   then it also holds for $N_2$; similarly, if $(*)$ holds for    $N_2$ and $N_3$, then it holds for   $N_1$.    
The proof or $(*)$ will be done in several steps.

\bigskip
\noindent {\em Step 0}. 
Here we explain the mechanism behind much that is to follow. Let us   give ourselves a resolution of $M$ by flat $A$-modules   
 $P_\bullet\longrightarrow M$, so that 
\[
{\rm Tor}_i^A(M,N)\simeq H_i(P_\bullet\oti_AN).
\]
Since $A$ and $M$ are $R$-flat  then, for any ideal $\g a\subset R$,   the complex of $R/\g a$-modules $P_\bullet/\g aP_\bullet\to M/\g aM$ is also a   resolution of $M/\g aM$ by flat $R/\g a$-modules.  Indeed, note that  $H_{i}(P_\bullet/\g aP_\bullet )\simeq {\rm Tor}_i^R(M,R/\g a)=0$ for $i>0$.  
Now, if $\g a\subset\mathrm{ann}_R(N)$, then the canonical vertical arrows in 
\[\xymatrix{
\cdots\ar[r]&P_1\oti_AN\ar[r]\ar[d]^\sim&P_0\oti_AN\ar[r]\ar[d]^\sim&M\oti_AN\ar[r]\ar[d]^\sim&0
\\
\cdots\ar[r]&P_1/\g aP_1\underset {A/\g aA}{\oti}N\ar[r]&P_0/\g aP_0\underset {A/\g aA}{\oti} N\ar[r]&M/\g aM\underset {A/\g aA}{\oti}N\ar[r]&0
}
\]
are isomorphisms  \cite[II.3.6, Corollary 3]{bourbaki_algebra} so that
\[\tag{$\dagger$}
\Tor_i^A(M,N)\simeq\Tor_i^{A/\g aA}(M/\g aM,N).\]
This will allow us to work inductively.

\bigskip
\noindent {\em Step 1}. We assume that $R$ is integral and proceed by induction on $\dim R$; the case $\dim R=0$  is obvious. 
We therefore suppose that  $\dim R>0$ and that $(*)$ is true for all domains of dimension strictly smaller than $\dim R$.

\noindent \textbf{Case 1a}: $\mathrm{ann}_R(N)$ contains a product of powers of distinct non-zero primes $\g p^{e_1}_1\cdots\g p_n^{e_n}$.

We proceed by induction on $\sum e_j$ to prove $(*)$. We have an exact sequence of $A$-modules 
\[0\to (0:\g p_1)_N\to N\to \frac{N}{(0:\g p_1)_N}\to0\]
where 
\[
{\rm ann}_R\,(0:\g p_1)_N\subset\g p_1\quad\text{and}\quad{\rm  ann}_R\,\frac{N}{(0:\g p_1)_N}\subset\g p_1^{e_1-1}\g p_2^{e_2}\cdots\g p_n^{e_n}.
\]
It is then enough to verify the case where $N$ is annihilated by a prime $\g p\in \mathrm{Spec}\,R$. 
Using $(\dagger)$, we conclude that
$\Tor_i^A(M,N)\simeq \Tor_i^{A/\g pA}(M/\g pM,N)$. 
Now, it is not difficult to see that the hypothesis of the Theorem are verified for  $R/\g p$, $A/\g pA$ and $M/\g pM$ and since $\dim R/\g p<\dim R$, we may conclude by induction that $\Tor_i^A(M,N)=0$ for $i>0$. 

\noindent \textbf{Case 1b}: $\mathrm{ann}_R(N)\not=0$. Each non-zero ideal $\g a\subset R$ contains a product of powers of non-zero primes, by primary decomposition. Thus using Case 1a, $\mathrm{Tor}_i^A(M,N)=0$ for all $i>0$.

\noindent \textbf{Case 1c}: $N$ is an $R$-torsion module.

In this case, $N=\varinjlim_{\lambda}N_\lambda$
where each $N_\lambda$ is an $A$-module such that $\mathrm{ann}_R(N_\lambda)\not=0$.  We then apply Case 1b,    and the isomorphism  
\[\mathrm{Tor}_i^A(M,N)\simeq \varinjlim_{\lambda}\mathrm{Tor}_i^A(M, N_\lambda).\]

\noindent \textbf{Case 1d}: $N$ is a torsionfree  $R$-module.

Let $K$ be the quotient field of $R$. Then $N$ is an $A$-submodule of $N_K:=N\oti_RK$  and the quotient   $N_K/N$, which is an  $A$-module,  is an $R$-torsion module. Case 1c implies that $\mathrm{Tor}_i^A(M,N_K/N)=0$ for all $i>0$. 

On the other hand, by assumption $M_K:=M\oti_RK$ is flat over $A_K:=A\oti_RK$, hence
${\rm Tor}_i^A(M,N_K)={\rm Tor}_i^{A_K}(M_K,N_K)=0$, for all $i>0$. 
Thus we get ${\rm Tor}_i^A(M,N)=0$ for all $i>0$ by means of the exact sequence $0\to N\to N_K\to N_K/N\to0$.

\noindent \textbf{Case 1e}: $N$ is arbitrary. Let 
\[T=\{n\in N\,:\,\text{$rn=0$ for some $r\in R\setminus\{0\}$}\}.
\]This is an $A$-submodule of $N$ such that  the  $R$-module $N/T$ is torsionfree. Applying Case 1d to $N/T$, Case 1c to $T$, 
and the long exact sequence associated to $0\to T\to N\to N/T\to0$ shows that $(*)$ is true for $N$. 
\bigskip 

\noindent{\em Step 2.} $R$ is not a domain. Taking an arbitrary $A$-module $N$, we wish to prove $(*)$. Using primary decomposition, we find (not necessary pairwise distinct) non-zero prime ideals $\pp_1,\ldots,\pp_n$ of $R$ whose product is zero. For each prime ideal $\pp \in \Spec\,R$, using Step 1 for the flat map $R/\pp \to A/\pp A$ and the module $M/\pp M$, we see that $M/\pp M$ is flat over $A/\pp A$. Fix an integer $i>0$. Consider the filtration:
\[
\underbrace{N}_{F^0}\supset \underbrace{\mathfrak p_1N}_{F^1}\supset \underbrace{\mathfrak p_2\mathfrak p_1N}_{F^2}\supset\ldots\supset \underbrace{\mathfrak p_n \ldots\mathfrak p_2\mathfrak p_{1}N}_{F^n}= 0,
\]
so that, writing $G^\nu=F^{\nu-1}/F^{\nu}$, we have $\mathrm{ann}_R\,G_\nu\subset\g p_\nu$. Using $(\dagger)$, we conclude that $\Tor_i^A(M,G_\nu)\simeq\Tor_i^{A/\g p_\nu A}(M/\g p_\nu M,G_\nu)$. Because of Step 1, applied to $R/\g p_\nu$, $A/\g p_\nu A$ and $M/\g p_\nu M$, we conclude that $\Tor_i^A(M,G_\nu)=0$. A simple argument with exact sequences then shows that $\Tor_i^A(M,N)=0$.  
\end{proof}


\begin{cor}
  \label{cor_fiber_faithfullyflat} 
  Let $f:R\to A$ be a flat homomorphisms rings,  with  $R$  Noetherian. Let $M$ be an $A$ module. Assume that
$M$ is flat over $R$ and for all $\g p\in {\rm Spec} \, R$, $M\oti_R\bm k({\mathfrak p})$ is faithfully flat over $A\oti_R\bm k({\mathfrak p})$. Then $M$ is faithfully flat over $A$.
\end{cor}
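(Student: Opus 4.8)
The plan is to deduce the corollary directly from Theorem \ref{prop_fiber}. Since a faithfully flat module is in particular flat, the fibrewise faithful flatness hypothesis implies hypothesis (FF) of Theorem \ref{prop_fiber}; that theorem therefore already gives us that $M$ is flat over $A$. It thus remains only to upgrade flatness to faithful flatness, and for this I would invoke the standard criterion that a flat $A$-module $M$ is faithfully flat if and only if $M/\mm M\neq 0$ for every maximal ideal $\mm\subset A$, i.e. $M\oti_A(A/\mm)\neq 0$.

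So I would fix a maximal ideal $\mm\subset A$ and set $\pp=f^{-1}(\mm)=\mm\cap R\in\Spec R$. The crucial point is that the residue field $A/\mm$ is naturally an algebra over the fiber ring $\bar A:=A\oti_R\bm k(\pp)$. Indeed, the composite $R\to A\to A/\mm$ kills $\pp$ and lands in the field $A/\mm$, hence sends every nonzero element of $R/\pp$ to a unit; it therefore factors through $\bm k(\pp)=\mathrm{Frac}(R/\pp)$. Consequently $A\to A/\mm$ factors through a surjection $\bar A\to A/\mm$ whose kernel is a maximal ideal $\bar\mm\subset\bar A$, and $A/\mm=\bar A/\bar\mm$ is precisely the residue field of $\bar\mm$, a genuine point of the fiber over $\pp$.

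Next I would compute by base change. Using associativity of the tensor product together with the canonical isomorphism $M\oti_A\bar A=M\oti_A(A\oti_R\bm k(\pp))\cong M\oti_R\bm k(\pp)=:\bar M$, one obtains
\[
M\oti_A (A/\mm)\;\cong\;\bar M\oti_{\bar A}(\bar A/\bar\mm).
\]
The hypothesis of the corollary says exactly that $\bar M$ is faithfully flat over $\bar A$; since $\bar A/\bar\mm\neq 0$, the right-hand side is nonzero. Hence $M/\mm M\neq 0$, and as $\mm$ was an arbitrary maximal ideal, $M$ is faithfully flat over $A$.

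The routine ingredients here (the base-change computation and the maximal-ideal criterion for faithful flatness) present no real difficulty. The one step carrying the actual content is the identification of $A/\mm$ with the residue field of a point of the fiber over $\pp$: this is what allows the fibrewise faithful flatness hypothesis to be fed into a statement about the single maximal ideal $\mm$, and it rests on the elementary but essential fact that contracting $\mm$ to $R$ produces a prime $\pp$ over which $\mm$ sits as a true point of the geometric fibre. I expect this to be the only place where one must be slightly careful.
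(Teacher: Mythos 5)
Your proposal is correct and follows essentially the same route as the paper's proof: apply Theorem \ref{prop_fiber} to get flatness, then use the maximal-ideal criterion for faithful flatness together with the factorization $A\to A\oti_R\bm k(\pp)\to A/\mm$ (where $\pp=\mm\cap R$) and the base-change isomorphism $M\oti_A(A/\mm)\simeq (M\oti_R\bm k(\pp))\oti_{A\oti_R\bm k(\pp)}(A/\mm)$. Your additional justification that the map $R/\pp\to A/\mm$ inverts all nonzero elements (hence factors through $\bm k(\pp)$) is exactly the point the paper leaves implicit.
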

\begin{proof}  Theorem
 \ref{prop_fiber} implies that $M$ is flat over $A$. We show the faithful flatness. 
According to \cite[7.2, p. 47]{Mat89}, it suffices to show that 
$M\oti_A\bm k(\g m) \neq 0$ for each maximal ideal $\mathfrak m$ of $A$.
Let $\g p\in\mathrm{Spec}\,R$ lie below $\g m$. Note that $A\to \bm k(\g m)$ factors as $A\to A\oti_R\bm k(\g p)\to \bm k(\g m)$.
Therefore 
\[
M\oti_A\bm k(\g m) \simeq  (M\oti_{A}(A\oti_R\bm k(\g p)))\underset {A\oti_R\bm k(\g p)}{\oti} \bm k(\g m).
\]
By assumption, $M\oti_{A}(A\oti_R\bm k(\g p))$ is faithfully flat over $A\oti_R\bm k(\g p)$, hence the right-hand-side above is non-zero.
\end{proof}

\section{Further fiber criteria for flatness}
\label{sect_criteria_Tor}

Let $R$ be a noetherian ring and  $f:R\to A$ be a morphism of rings. Let $M$ be an $A$-module. The assumptions  in Theorem
 \ref{prop_fiber} may be considerably weakened: instead of flatness of  $M$ and $A$ over $R$, we shall require simply  the that    $\text{Tor}_1^A(M,-)$ vanishes for a certain family of $A$-modules (specified in  Theorem \ref{thm_fibercriterion_prime}).

We first fix some notations.
%
%
Given an integral domain $D$ and a $D$-module  $N$, let $t(N)$ denote the {\it torsion submodule} of $N$.
The quotient 
$N_{\tf}=N/t(N)$ 
is the  {\it torsionfree} part of $N$. We note that $t(t(N))=t(N)$, and $t(N_{\tf})=0$ (i.e. $t(N)$ is a torsion module, while $N_{\tf}$ has no torsion).
These constructions shall be applied in the following situation. 
Given a ring homomorphism $R\to A$ and a prime ideal $\pp \in \Spec\,R$, we obtain an $R/\g p$-module $A/\g pA$, to which we apply the above constructions and arrive at 
\[
t(A/\g pA)\quad \text{and}\quad (A/\g pA)_{\tf}. 
\]
Thus $(A/\pp A)_{\tf}$ depends on the map $R\to A$ and the prime ideal $\pp$. 

\begin{thm}[Fiber criterion for flatness]
\label{thm_fibercriterion_prime}
Let $R$ be a noetherian ring and 
$f:R\to A$ a homomorphism. Let $M$ be an $A$-module. Assume that for every  $\pp \in \Spec\,R$, the following conditions hold:
\begin{enumerate}[\quad \rm (i)]
 \item the canonical  map $\g pA\oti_AM\to\g pM$ is injective;  alternatively, 
 $\Tor^A_1(A/\pp A,M)$ vanishes;
 \item The $\left(A\oti_R {\boldsymbol k}(\pp)\right)$-module $M\oti_R {\boldsymbol k}(\pp)$ is  flat;
  \item Either $A$ is noetherian or $\Tor^A_1((A/\pp A)_{\tf},M)=0$.
\end{enumerate}
Then $M$ is a flat $A$-module.
\end{thm}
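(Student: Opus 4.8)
The plan is to establish that $\Tor_1^A(M,N)=0$ for every $A$-module $N$, and to organize everything around the single claim

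\medskip
\noindent(A): \emph{for every $\pp\in\Spec R$ the $A/\pp A$-module $M/\pp M$ is flat.}
\medskip

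\noindent First I would show that (A) \emph{already} implies the theorem, using only degree-one Tor (so that no vanishing of higher Tor is needed for the final conclusion). Granting (A), hypothesis (i) together with Lemma \ref{lem_localcriterion_generalform} applied to the ideal $\pp A$ gives $\Tor_1^A(M,N)=0$ for every $A/\pp A$-module $N$. For arbitrary $N$ one then runs the primary-decomposition filtration of Step 2 in the proof of Theorem \ref{prop_fiber}: picking primes with $\pp_1\cdots\pp_n=0$ in $R$ and filtering $N$ by the submodules $\pp_\nu\cdots\pp_1 N$, each graded piece is killed by some $\pp_\nu$, hence has vanishing $\Tor_1^A(M,-)$; a squeeze in the long exact sequences propagates the vanishing up to $N$. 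Thus everything reduces to (A).

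To prove (A) I would use well-founded induction on $\pp$ ordered by reverse inclusion; this is legitimate because $R$ is noetherian (ACC forbids infinite strictly ascending chains of primes), and the base case consists of the maximal ideals, where $A/\pp A=A\oti_R\bm k(\pp)$ and $M/\pp M=M\oti_R\bm k(\pp)$, so (A) is exactly hypothesis (ii). For the inductive step fix $\pp$, pass to the domain $D=R/\pp$, and write $B=A/\pp A$, $\bar M=M/\pp M$. The useful device here is the base-change isomorphism $\Tor_1^A(N,M)\cong\Tor_1^{B}(N,\bar M)$, valid for all $B$-modules $N$: it comes from the five-term exact sequence of the change-of-rings spectral sequence $\Tor_p^{B}(N,\Tor_q^A(B,M))\Rightarrow\Tor_{p+q}^A(N,M)$ together with the vanishing $\Tor_1^A(B,M)=0$ from (i). Through it, hypotheses (i) and (iii) translate into $\Tor_1^B(B/\qq B,\bar M)=0$ (for $\qq\supseteq\pp$) and $\Tor_1^B(B_{\tf},\bar M)=0$, where $B_{\tf}=(A/\pp A)_{\tf}$.

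I would then verify $\Tor_1^B(\bar M,N)=0$ by splitting $N$ according to its $D$-torsion. If $N$ is $D$-torsion, a direct-limit argument together with the primary-decomposition filtration reduces to $N$ annihilated by a single nonzero prime $\qq$ of $D$; such an $N$ is a $B/\qq B$-module, and Lemma \ref{lem_localcriterion_generalform} applies over $B$ with the ideal $\qq B$, its two hypotheses being the inductive instance of (A) at the prime of $R$ above $\qq$ and the translated $\Tor_1^B(B/\qq B,\bar M)=0$. If $N$ is $D$-torsion-free, the key observation is that $t_D(B)\cdot N=0$ (for $b\in t_D(B)$ the element $bn$ is $D$-torsion, hence zero), so $N$ is automatically a $B_{\tf}$-module; one then wishes to apply Lemma \ref{lem_localcriterion_generalform} over $B$ with the ideal $\tau=t_D(B)$, whose two hypotheses are $\Tor_1^B(B_{\tf},\bar M)=0$ (this is exactly the translated (iii)) and the flatness of $\bar M\oti_B B_{\tf}$ over $B_{\tf}$. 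The two cases glue through $0\to t_D(N)\to N\to N/t_D(N)\to0$ and another $\Tor_1$-squeeze.

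The main obstacle is the one remaining input in the torsion-free case: the flatness of $\bar M\oti_B B_{\tf}$ over the \emph{torsion-free} algebra $B_{\tf}$, whose generic fibre is flat by (ii). Here the mechanism of Theorem \ref{prop_fiber} is unavailable, since $B_{\tf}$ need not be $D$-flat, and a naive re-run of the present induction is circular, because for a torsion-free algebra \emph{every} test module counts as ``torsion-free''. This is precisely where the dichotomy in (iii) matters: either $A$ is noetherian, in which case one falls back on the classical finite-presentation criteria (Lemma \ref{lem.08.09.21--1}) after the standard reductions to the finite setting, or one imposes the torsion-free Tor-vanishing directly. I expect this torsion-free-algebra statement to be the technical heart of the proof, and to be exactly what the auxiliary result Theorem \ref{thm_fibercriterion_strong} is designed to supply.
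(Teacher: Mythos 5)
Your global architecture is sound and, in its successful parts, close to the paper's own: the reduction of the theorem to claim (A), the well-founded induction (the paper phrases it as noetherian induction on quotient rings of $R$), the change-of-rings isomorphism $\Tor_1^A(N,M)\cong\Tor_1^B(N,\bar M)$ extracted from hypothesis (i), and the treatment of $D$-torsion test modules via Lemma \ref{lem_localcriterion_generalform} all have exact counterparts in the paper. The genuine gap is precisely where you locate it --- the torsion-free case --- but your expectation of how to close it is mistaken on both branches of (iii). The statement you reduce to, flatness of $\bar M\oti_B B_{\tf}$ over $B_{\tf}$, is \emph{not} what Theorem \ref{thm_fibercriterion_strong} supplies: to apply that theorem to $D\to B_{\tf}$ you would need its hypothesis (F1), i.e. $\Tor_1^{B_{\tf}}\bigl(B_{\tf}/\lfr B_{\tf},\,\bar M\oti_B B_{\tf}\bigr)=0$ for every ideal $\lfr\subset D$, and nothing in (i)--(iii) provides this; indeed the paper never proves, nor needs, flatness of anything over the torsion-free quotient algebra. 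Likewise, in the branch where $A$ is noetherian, ``falling back on Lemma \ref{lem.08.09.21--1} after standard reductions to the finite setting'' is not available: $M$ is an arbitrary module, and there is no reduction of flatness of arbitrary modules to finite ones --- removing exactly that finiteness hypothesis is the point of the whole paper.

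The correct move, and what the paper does, is to prove $\Tor_1^B(\bar M,N)=0$ for torsion-free $N$ \emph{directly}, bypassing any flatness claim over $B_{\tf}$; both arguments run on inputs you already possess. Under (F3a): $B$ is noetherian, and for nonzero $z\in D$ your torsion case (plus the base-change isomorphism) already yields $\Tor_1^B(B/zB,\bar M)=0$ and flatness of $\bar M/z\bar M$ over $B/zB$; Lemma \ref{lem_reduction_to_domains} (this is where noetherianity enters, through stabilization of the chain $(0:z)\subset(0:z^2)\subset\cdots$) then gives $\Tor_1^B(\bar M,N/zN)=\Tor_2^B(\bar M,N/zN)=0$, so multiplication by every nonzero $z\in D$ is bijective on $\Tor_1^B(\bar M,N)$; since this module localizes to $\Tor_1^{B_K}(\bar M_K,N_K)=0$ by (ii), it vanishes. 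Under (F3b): choose a surjection $F\to N$ with $F$ free over $B$; it factors through a surjection $F_{\tf}\to N$ with kernel $U$, and one chases the diagram built from $0\to U\to F_{\tf}\to N\to 0$ and its localization at $K=\mathrm{Frac}(D)$: the map $\Tor_1^B(\bar M,N)\to\bar M\oti_B U$ is injective because $\Tor_1^B(B_{\tf},\bar M)=0$ (your translated (iii)), the map $\bar M\oti_B U\to\bar M\oti_B U_K$ is injective by your torsion case applied to $U_K/U$, the map $\bar M\oti_B U_K\to\bar M\oti_B (F_{\tf})_K$ is injective by (ii), and the total composite is zero; hence $\Tor_1^B(\bar M,N)=0$. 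Substituting these two arguments for your attempted application of Lemma \ref{lem_localcriterion_generalform} with the ideal $t_D(B)$ closes your induction.
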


\begin{rmk}[On the hypothesis in Theorem \ref{thm_fibercriterion_prime}]
\label{rmk_torsionfree_hypothesis}
Let us adopt the notations in the statement of Theorem \ref{thm_fibercriterion_prime}.
  Given an ideal  $\g l\subset R$, we consider the composition map  
\[
\xymatrix{(\g l\oti_RA)\oti_A M\ar[rr]^-{\alpha\oti{\rm id}_M}&&\g lA\oti_AM\ar[r]^-{\beta}&\g lM};
\]
clearly $\beta$ and $\alpha\oti{\rm id}$ are surjective, and if $M$ is $R$-flat, then $\beta\circ(\alpha\oti{\rm id})$ is an isomorphism, so that $\beta$ is likewise. Therefore, if $M$ is $R$-flat, then $\Tor_1^A(A/\g lA,M)=0$ for any $\g l\subset  R$:  condition (i) of Theorem \ref{thm_fibercriterion_prime} holds. In addition, if $A$ is $R$-flat, then $A/\g pA\simeq (A/\g pA)_{\tf}$, which implies condition (iii) of the aforementioned theorem. Hence Theorem \ref{prop_fiber} follows from Theorem \ref{thm_fibercriterion_prime}. 

We end this remark by noting that  hypothesis (iii) of Theorem \ref{thm_fibercriterion_prime} may follow directly from (i), e.g.,   if $A$ is $R$-flat, or if $R$ is artinian.
\end{rmk}

We will deduce Theorem \ref{thm_fibercriterion_prime} from the following slightly weaker version, which we will treat first.

\begin{thm}
\label{thm_fibercriterion_strong}
Let $R\to A$ be a ring homomorphism, where $R$ is noetherian. Let $M$ be an $A$-module. Assume that the following conditions hold:
\begin{enumerate}[\quad \rm (F1)]
\item For every \textup{(}possibly non-prime\textup{)} ideal ${\mathfrak{l}}$ of $R$,  $\Tor^A_1(A/{\mathfrak l}A,M)=0$;
 \item The $\left(A\oti_R {\boldsymbol k}(\pp)\right)$-module $M\oti_R {\boldsymbol k}(\pp)$ is flat for every $\pp\in \Spec\,R$.
 \end{enumerate}
 Assume moreover that either of the following conditions are fulfilled:
 \begin{enumerate}[\quad \rm (F3a)]
 \item $A$ is a noetherian ring;
 \item $\Tor^A_1((A/\pp A)_{\tf},M)=0$ for every $\pp \in \Spec\,R$.
\end{enumerate}
Then $M$ is a flat $A$-module.
\end{thm}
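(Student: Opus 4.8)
The plan is to follow closely the architecture of the proof of Theorem~\ref{prop_fiber}, but now controlling only $\Tor^A_1$ (which is enough for flatness), since the weakened hypotheses give no access to the higher Tor modules. As there, it suffices to prove $\Tor^A_1(M,N)=0$ for every $A$-module $N$. First I would reduce to the case where $R$ is a domain: writing $(0)$ as an intersection of primary ideals and filtering $N$ exactly as in Step~2 of the proof of Theorem~\ref{prop_fiber}, each graded piece is annihilated by a prime $\pp_\nu$, and a descending induction along the filtration (which only needs the two end-terms of each short exact sequence to have vanishing $\Tor_1$) reduces the claim to the analogous statement over the domains $R/\pp_\nu$. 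For the domain case I would induct on $\dim R$; when $\dim R=0$ the ring $R$ is a field, $A\oti_R\bm k((0))=A$ and $M\oti_R\bm k((0))=M$, so (F2) at the zero ideal gives flatness outright.

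The engine of the induction is a change-of-rings principle at the level of $\Tor_1$: if $N$ is annihilated by an ideal $\lfr\subseteq R$, then (F1) gives $\Tor^A_1(A/\lfr A,M)=0$, whence the change-of-rings five-term exact sequence degenerates into a natural isomorphism $\Tor^A_1(M,N)\simeq \Tor^{A/\lfr A}_1(M/\lfr M,N)$. Granting this, the core of the inductive step is to prove that $M/\pp M$ is flat over $A/\pp A$ for each nonzero prime $\pp$: the hypotheses (F1), (F2), (F3) descend to the triple $R/\pp\to A/\pp A$, $M/\pp M$ (for (F2) one uses $\bm k(\qq/\pp)=\bm k(\qq)$ and $A\oti_R\bm k(\qq)=(A/\pp A)\oti_{R/\pp}\bm k(\qq)$ for $\qq\supseteq\pp$, and for (F3) that $A/\pp A$ is again noetherian, resp. that $(A/\qq A)_{\tf}$ is unchanged), while $\dim R/\pp<\dim R$, so the inductive hypothesis applies. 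Once $M/\pp M$ is $A/\pp A$-flat, Lemma~\ref{lem_localcriterion_generalform} with $\afr=\pp A$, together with (F1), yields $\Tor^A_1(M,N)=0$ for every module $N$ killed by a power of $\pp A$; combined with the filtration and a colimit argument this settles all $R$-torsion $N$. Finally, localising at $R\setminus\{0\}$ and using (F2) at the zero ideal shows that, for $K=\bm k((0))$ and $A_K=A\oti_RK$, $M_K=M\oti_RK$, $N_K=N\oti_RK$, one has $\Tor^A_1(M,N)\oti_RK\simeq\Tor^{A_K}_1(M_K,N_K)=0$; hence $\Tor^A_1(M,N)$ is always an $R$-torsion module, and it vanishes whenever $N$ is an $A_K$-module.

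The remaining and decisive case is that of an $R$-torsionfree test module $N$, to which the general case reduces through $0\to t(N)\to N\to N/t(N)\to0$ and the torsion case. This is where I expect the main obstacle to lie: the embedding $N\hookrightarrow N_K$ has $R$-torsion cokernel $C$, and the long exact sequence only produces $\Tor^A_1(M,N)\simeq\Tor^A_2(M,C)$, so that the naive dévissage of Theorem~\ref{prop_fiber} collapses because the \emph{higher} Tor of torsion modules is not available under these hypotheses. The way around this is to exploit (F3). When $A$ is noetherian (F3a), a prime filtration of $N$ reduces matters to cyclic modules $A/\qq$ with $\qq\in\Spec A$; those with $\qq\cap R\neq0$ are killed by the change-of-rings isomorphism and the dimension induction, whereas for a generic prime $\qq\cap R=0$ the $A$-module $\Tor^A_1(M,A/\qq)$ is simultaneously $R$-torsion (by the generic-fibre computation) and torsion over the domain $A/\qq$ (localise at its generic point and invoke flatness of $M_K$ over $A_K$); condition (F3b) at $\pp=0$, read through the surjectivity of multiplication by each $r\in R\setminus\{0\}$ on this module, is exactly what is designed to force it to vanish. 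In the non-noetherian case one invokes (F3b) at every prime in place of the prime filtration. Reconciling the ``horizontal'' flatness supplied by (F2) with the ``vertical'' torsion information encoded in (F1) and (F3) is the genuinely delicate point, and I would expect the bulk of the technical effort to be concentrated there.
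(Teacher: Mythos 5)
Your reductions are sound and essentially coincide with the paper's: the change-of-rings isomorphism extracted from (F1), the passage of all hypotheses to quotient rings $R/\hh$, the treatment of $R$-torsion test modules via Lemma \ref{lem_localcriterion_generalform}, and the use of (F2) at the generic point to show that $\Tor^A_1(M,N)$ is $R$-torsion are all exactly the paper's Steps (the paper runs a noetherian induction on quotients of $R$ rather than induction on $\dim R$, which also covers noetherian rings of infinite Krull dimension, but this is cosmetic). The genuine gap sits precisely where you predicted the "bulk of the technical effort" to lie, and your sketch there does not close. Under (F3a) you invoke "condition (F3b) at $\pp=0$" --- but (F3a) and (F3b) are \emph{alternative} hypotheses, so in the noetherian case you have no access to (F3b); producing it would amount to the theorem itself. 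Worse, the mechanism you describe cannot work even if it were available: surjectivity of multiplication by every $r\in R\smallsetminus\{0\}$ on an $R$-torsion module does not force vanishing ($\mathbb{Q}/\mathbb{Z}$ over $\mathbb{Z}$ is torsion and divisible, yet nonzero). What is needed is \emph{injectivity} of multiplication by $r$ on $\Tor^A_1(M,N)$, and that requires vanishing of $\Tor^A_2(M,-)$ on modules killed by nonzero elements of $R$. This is exactly what hypothesis (F3a) buys: Lemma \ref{lem_reduction_to_domains}, whose proof uses the stabilization of the chain $(0:z)_A\subset (0:z^2)_A\subset\cdots$, gives $\Tor^A_i(M,N')=0$ for \emph{all} $i\ge 1$ whenever $N'$ is killed by some $z\neq 0$. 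So your opening premise that "the weakened hypotheses give no access to the higher Tor modules" is false under (F3a), and renouncing higher Tor is what dooms your argument in that case. The paper's actual argument needs no prime filtration: for torsionfree $N$ and $z\neq 0$ it applies $0\to N\xrightarrow{\,z\,}N\to N/zN\to 0$, uses $\Tor^A_1(M,N/zN)=\Tor^A_2(M,N/zN)=0$ to conclude that $z$ acts bijectively on $\Tor^A_1(M,N)$, and then the generic-fibre torsionness finishes. (Your filtration route could be patched with the same $\Tor_2$ input, but it is then a detour.)

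Under (F3b) your proposal contains no argument at all: "one invokes (F3b) at every prime in place of the prime filtration" does not say how the hypothesis enters, and the paper only ever uses (F3b) at the single prime $\pp=(0)$ at this stage. The actual mechanism is a specific dévissage: choose a free surjection $F\to N$; since $N$ is torsionfree it factors through a surjection $F_{\tf}\to N$, and $F_{\tf}$ is a direct sum of copies of $A_{\tf}$, so (F3b) at $\pp=(0)$ gives $\Tor^A_1(M,F_{\tf})=0$ and hence an embedding $\Tor^A_1(M,N)\hookrightarrow M\oti_A U$, where $U=\Ker(F_{\tf}\to N)$. Then $M\oti_A U\to M\oti_A U_K$ is injective because $U$ is torsionfree and $\Tor^A_1(M,U_K/U)=0$ (your torsion case), and $M\oti_A U_K\to M\oti_A (F_{\tf})_K$ is injective because $M_K$ is $A_K$-flat by (F2). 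Since $\Tor^A_1(M,N)$ maps to zero in $M\oti_A(F_{\tf})_K$ by exactness, it vanishes. This diagram chase, together with the $\Tor_2$-vanishing lemma above, are the two missing engines of the proof; everything else in your outline is correct.
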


For the proof of this theorem, we shall need some lemmas.

\begin{lem}[The torsion submodule localizes]
\label{lem_torsion_submodule_localizes}

Let $R\to A$ be a ring homomorphism, where $R$ is a domain. Let $\qq\in \Spec(A)$ be a prime ideal, $\pp=\qq\cap R$, and $\uu\subset \pp$ another prime ideal of $R$. Then there is an equality $t(A_\qq/\uu A_\qq)= t(A/\uu A)_\qq$ of submodules of $(A/\uu A)_\qq$, which is considered as an $(R/\uu)_\pp$-algebra.
\end{lem}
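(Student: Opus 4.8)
The plan is to recognize both submodules as the kernel of a single map to a $\bm k(\uu)$-vector space and then to invoke exactness of localization. Since $\uu$ is prime, $D:=R/\uu$ is a domain whose fraction field is $\bm k(\uu)$; write $N:=A/\uu A$, viewed as a $D$-algebra. Because localization is exact, $N_\qq\cong A_\qq/\uu A_\qq$, so both $t(A_\qq/\uu A_\qq)$ and $t(A/\uu A)_\qq$ are genuinely submodules of $N_\qq$. The first observation I would record is that, as $\pp=\qq\cap R$, every element of $R\setminus\pp$ maps into $A\setminus\qq$; hence these elements become invertible in $N_\qq$ and $N_\qq$ acquires the structure of a module over $D_\pp:=(R/\uu)_\pp$. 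The point that makes everything work is that $D$ and its localization $D_\pp$ share the \emph{same} fraction field $\bm k(\uu)$.

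First I would express torsion as a kernel: for any $D$-module $P$ one has $t(P)=\Ker\bigl(P\to P\oti_D\bm k(\uu)\bigr)$, and for $P=N$ this map is $A$-linear because it is even $N$-linear. Applying the flat base change $-\oti_A A_\qq$ and using that localizations commute, I obtain
\[
t(N)_\qq=\Ker\bigl(N_\qq\to (N\oti_D\bm k(\uu))\oti_A A_\qq\bigr)=\Ker\bigl(N_\qq\to N_\qq\oti_D\bm k(\uu)\bigr).
\]
On the other hand, since $N_\qq$ is already a $D_\pp$-module and $\bm k(\uu)=D_\pp\oti_D\bm k(\uu)$, there is a canonical identification $N_\qq\oti_D\bm k(\uu)=N_\qq\oti_{D_\pp}\bm k(\uu)$; the kernel of $N_\qq\to N_\qq\oti_{D_\pp}\bm k(\uu)$ is exactly $t(N_\qq)=t(A_\qq/\uu A_\qq)$ computed over the domain $D_\pp$. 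Comparing the two descriptions yields the asserted equality.

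The step I expect to be the main obstacle is the bookkeeping around the two competing torsion notions: the right-hand side $t(A/\uu A)_\qq$ is torsion taken over $D=R/\uu$ and then localized, whereas the left-hand side $t(A_\qq/\uu A_\qq)$ is torsion taken over $D_\pp$. Localizing $A$ at $\qq$ inverts many elements of $A$ that do not come from $R$, so a priori the two notions could differ; the equality of fraction fields $\bm k(\uu)$ is precisely what reconciles them. As a more hands-on alternative I would argue by double inclusion: the containment $t(N)_\qq\subseteq t(N_\qq)$ follows because a nonzero $d\in D$ remains nonzero in $D_\pp$, so any $d$-torsion element stays torsion after localizing; for the reverse containment, given $y\in N_\qq$ annihilated by a nonzero $\delta=d/e\in D_\pp$ I would clear the denominator to get $d\,y=0$, write $y=x/s$ with $s\in A\setminus\qq$, produce $t\in A\setminus\qq$ with $t\,d\,x=0$ in $N$, and conclude that $tx\in t(N)$ while $y=tx/(ts)\in t(N)_\qq$. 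Either route reduces the lemma to the single fact that torsion over a domain is compatible with flat localization.
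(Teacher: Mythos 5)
Your proposal is correct, and its primary argument takes a genuinely different route from the paper's. The paper reduces to $\uu=(0)$ (replacing $R\to A$ by $R/\uu\to A/\uu A$) and then proves $t(A_\qq)=t(A)_\qq$ by a double-inclusion element chase; that chase is, essentially word for word, your ``hands-on alternative'' (clear the denominator of $\delta=d/e$, write $y=x/s$, produce $w\in A\setminus\qq$ with $wdx=0$ in $N$, conclude $y=wx/(ws)\in t(N)_\qq$). Your preferred route is different and also sound: with $D=R/\uu$ and $N=A/\uu A$, you describe torsion as the kernel $t(P)=\Ker\bigl(P\to P\oti_D\bm k(\uu)\bigr)$, apply the exact functor $-\oti_A A_\qq$ to the $A$-linear map $N\to N\oti_D\bm k(\uu)$, commute the two tensor operations to get $t(N)_\qq=\Ker\bigl(N_\qq\to N_\qq\oti_D\bm k(\uu)\bigr)$, and then use the one substantive point --- that $D$ and $D_\pp=(R/\uu)_\pp$ have the same fraction field $\bm k(\uu)$ --- to identify this kernel with the $D_\pp$-torsion $t(A_\qq/\uu A_\qq)$ via $N_\qq\oti_D\bm k(\uu)\cong N_\qq\oti_{D_\pp}\bm k(\uu)$. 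All the ingredients (torsion as the kernel of the map to the generic fibre, flatness of $A\to A_\qq$, the base-change identification, and the fact that $D$-torsion and $D_\pp$-torsion agree on any $D_\pp$-module) are standard and correctly deployed, so there is no gap. What your route buys is conceptual clarity and a stronger statement: it shows that torsion over a domain commutes with an arbitrary flat base change $-\oti_A B$, localization at $\qq$ being the special case actually needed; what the paper's element chase (and your fallback) buys is brevity and self-containedness, with no appeal to tensor-product bookkeeping.
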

\begin{proof}
Replacing $R\to A$ by $R/\uu \to A/\uu A$, we can assume that $\uu=(0)$. We show that $t(A_\qq)=t(A)_\qq$, where $A_\qq$ is considered as an $R_\pp$-module. If $x\in t(A)$, then $ax=0$ for some $a\in R\smallsetminus \{0\}$. Now as $R$ is a domain, so is $R_\pp$ and for any $s\notin \qq$, $(a/1)(x/s)=0$, so $x/s\in t(A_\qq)$. Thus $t(A)_\qq \subset t(A_\qq)$.

Conversely, take $x/s\in t(A_\qq)$, where $x\in A, s\notin \qq$. Then $(a/s')(x/s)=0$ for some $a\in R\smallsetminus \{0\}, s'\in R\smallsetminus \pp$. This implies $wax=0$ for some $w\in  A\smallsetminus \qq$. Hence $wx\in t(A)$ and $x/s=wx/(ws)\in t(A)_\qq$, as desired.
\end{proof}

We have the following observation on the vanishing of certain Tor modules.

\begin{lem}
\label{lem_reduction_to_domains}
Let $A$ be noetherian and $M$ be an $A$-module. Let $z\in A$ be a (non-zero) element. Assume that $\Tor^A_1(M,A/zA)=0$ and $M/zM$ is a flat $(A/zA)$-module. Then 
$\Tor^A_i(M,N)=0$ for any $(A/zA)$-module $N$ and any $i\ge 1$. 
\end{lem}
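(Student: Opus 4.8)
The plan is to isolate two inputs and then run a dimension-shift induction. The first input is that $\Tor^A_1(M,N)=0$ for \emph{every} $A/zA$-module $N$; this is already furnished by Lemma \ref{lem_localcriterion_generalform} applied with $\mathfrak a=zA$, whose hypotheses are exactly $\Tor^A_1(A/zA,M)=0$ and flatness of $M/zM$ over $A/zA$. The second, and decisive, input is the vanishing $\Tor^A_i(M,A/zA)=0$ for \emph{all} $i\ge 1$. Granting these, one argues as follows: given an $A/zA$-module $N$, pick a short exact sequence $0\to K\to F\to N\to 0$ with $F$ free over $A/zA$, so that $K$ is again an $A/zA$-module. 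Since $\Tor^A_i(M,F)=\bigoplus\Tor^A_i(M,A/zA)=0$ for $i\ge1$, the long exact sequence of $\Tor^A(M,-)$ gives isomorphisms $\Tor^A_i(M,N)\cong\Tor^A_{i-1}(M,K)$ for $i\ge 2$; induction on $i$, with base case $i=1$ supplied by the first input, then yields $\Tor^A_i(M,N)=0$ for all $i\ge1$.

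Everything therefore reduces to proving $\Tor^A_i(M,A/zA)=0$ for $i\ge 2$, the case $i=1$ being a hypothesis. Here the plan is to use that $z$ is a nonzerodivisor, which is automatic in the situation where the lemma is meant to be applied, namely $A$ a domain and $z\ne 0$ (compare Lemma \ref{lem_torsion_submodule_localizes} and the name of the present lemma). For such $z$ the sequence $0\to A\xrightarrow{\,z\,}A\to A/zA\to 0$ is a free resolution, so $A/zA$ has projective dimension at most one and $\Tor^A_i(M,A/zA)=0$ for every $i\ge 2$ and every $M$. A slightly more conceptual variant, which I would also mention, avoids the explicit induction: since (given $z$ nonzerodivisor on $A$) the equality $\Tor^A_1(M,A/zA)=0$ means precisely that $z$ is a nonzerodivisor on $M$, the reduction $P_\bullet/zP_\bullet$ of an $A$-free resolution $P_\bullet\to M$ is an $(A/zA)$-free resolution of $M/zM$, whence $\Tor^A_i(M,N)\cong\Tor^{A/zA}_i(M/zM,N)$ for every $A/zA$-module $N$; the right-hand side vanishes for $i\ge1$ exactly because $M/zM$ is $(A/zA)$-flat.

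The main obstacle is thus the higher vanishing $\Tor^A_i(M,A/zA)=0$ for $i\ge2$, and this is where the nonzerodivisor hypothesis on $z$ is indispensable: for a genuine zerodivisor $A/zA$ may have infinite projective dimension and the asserted conclusion already fails at $N=A/zA$, $i=2$. Once that single point is secured, the remainder is the routine homological bookkeeping indicated above, using only that $\Tor$ commutes with direct sums together with the long exact sequence.
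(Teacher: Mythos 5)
Your first reduction is fine: granting $\Tor^A_i(M,A/zA)=0$ for all $i\ge 1$, dimension shifting over free $(A/zA)$-modules gives the conclusion, and the case of $\Tor_1$ against arbitrary $(A/zA)$-modules is indeed Lemma \ref{lem_localcriterion_generalform}. The gap is in your second input. You establish $\Tor^A_i(M,A/zA)=0$ for $i\ge 2$ only under the extra assumption that $z$ is a nonzerodivisor on $A$, and you justify this restriction by asserting that the lemma ``is meant to be applied'' with $A$ a domain. That is a misreading of the context: the lemma assumes only that $A$ is noetherian and $z\neq 0$, and in the paper's actual application (Step 2 and Case 2 of the proof of Theorem \ref{thm_fibercriterion_strong}) $z$ is the image in $A$ of a non-zero element of the domain $R$; nothing forces that image to be a nonzerodivisor of $A$ --- the name ``reduction to domains'' refers to $R$, not to $A$. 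So your argument proves a strictly weaker statement than what the paper needs.

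Worse, your escape route --- the claim that for a zerodivisor $z$ ``the asserted conclusion already fails at $N=A/zA$, $i=2$'' --- is false: the lemma is true exactly as stated. Infinite projective dimension of $A/zA$ does not produce nonvanishing of $\Tor^A_2(M,A/zA)$ for the particular $M$ allowed by the hypotheses; it is the flatness of $M/zM$ over $A/zA$ that forbids such examples. For instance, with $A=k[x,y]/(xy)$, $z=x$, $M=A/yA$, one has $\Tor^A_1(M,A/zA)=0$ yet $\Tor^A_2(M,A/zA)\neq 0$ --- but there $M/zM\cong k$ is not flat over $A/zA\cong k[y]$, so this is no counterexample to the lemma. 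The tell-tale sign in your write-up is that you never use the noetherian hypothesis on $A$: that hypothesis is precisely what handles zerodivisors. The paper's proof chooses $d$ with $(0:z^d)_A=(0:z^{d+1})_A$, so that $0\to z^dA \xrightarrow{\cdot z} z^dA \to z^dA/z^{d+1}A \to 0$ is exact; since $z^dA/z^{d+1}A$ is killed by $z$, the degree-one case makes multiplication by $z$ surjective on $\Tor^A_1(M,z^dA)\simeq \Tor^A_2(M,A/z^dA)$, a module annihilated by $z^d$, whence it vanishes; one then gets $\Tor^A_2(M,N)=0$ by comparing $N$ with free $(A/z^dA)$-modules (using Lemma \ref{lem_localcriterion_generalform} for $\mathfrak a=zA$, $n=d$), and the cases $i\ge 3$ by dimension shifting through $0\to (0:z)_A\to A\to zA\to 0$, where $(0:z)_A$ is again an $(A/zA)$-module. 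You would need to incorporate an argument of this kind; as it stands, your proof does not cover the stated lemma.
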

\begin{proof}
According to Lemma \ref{lem_localcriterion_generalform},
the claim holds for $i=1$. 

Since $A$ is noetherian, the chain of ideals $(0:z)_A \subset (0: z^2)_A \subset \cdots$ stabilizes. Thus one can choose a finite $d\ge 1$ such that $(0: z^d)_A=(0: z^{d+1})_A$. 

Since $z^dA/z^{d+1}A$ is annihilated by $z$, $\Tor^A_1(M,z^dA/z^{d+1}A)=0$. Hence the exact sequence
\[
0\to z^dA \stackrel{\cdot z}\to z^dA \to z^dA/z^{d+1}A \to 0
\]
induces an exact sequence $\Tor^A_1(M,z^dA)\stackrel{\cdot z}\to \Tor^A_1(M,z^dA) \to 0$. Thus the multiplication by $z$ is a surjective map on $\Tor^A_1(M,z^dA)$. This implies that the homothety $\Tor^A_1(M,z^dA)\stackrel{\cdot z^d}\to \Tor^A_1(M,z^dA)$ is also surjective. On the other hand, the module $\Tor^A_1(M,z^dA) \simeq \Tor^A_2(M,A/z^dA)$ is annihilated by $z^d$. Hence $\Tor^A_1(M,z^dA) \simeq \Tor^A_2(M,A/z^dA)=0$.

Since $zN=0$, $N$ is also an $(A/z^dA)$-module. Let $0\to U\to F\to N \to 0$ be an exact sequence of $(A/z^d A)$-modules, where $F$ is free. Thanks to Lemma \ref{lem_localcriterion_generalform}, $M/z^dM$ is a flat $(A/z^dA)$-module and  $\Tor^A_1(M,A/z^dA)=\Tor^A_1(M,U)=0$. The fact that $F$ is free over $A/z^dA$, and $\Tor^A_2(M,A/z^dA)=0$ yield the exact sequence
\[
0= \Tor^A_2(M,F) \to \Tor^A_2(M,N) \to \Tor^A_1(M,U) =0.
\]
Hence $\Tor^A_2(M,N)=0$, as claimed.

Now we show that $\Tor^A_i(M,N)=0$ for every $i\ge 1$ and every $(A/zA)$-module $N$ by induction on $i$. This is known for $i=1,2$. Assume that $i\ge 3$. From the exact sequences
\[
\xymatrixrowsep{2mm}
\xymatrix{
0\ar[r] & zA \ar[r] & A \ar[r] & A/zA \ar[r] & 0,\\
0\ar[r] & (0: z)_A \ar[r] & A \ar[r] & zA \ar[r] & 0,
}
\]
we arrive at  
\[
\begin{split}\Tor^A_i(M,A/zA)&\simeq\Tor^A_{i-1}(M,zA)
\\
&\simeq\Tor^A_{i-2}(M,(0: z)_A);
\end{split}\]
by the induction hypothesis jointly with the   facts that $i-2\ge 1$  and that $(0: z)_A$ is annihilated by $z$, we conclude that $\Tor_i^A(M,A/zA)=0$. Consequently,  for any free $(A/zA)$-module $F$ we have 
\[\tag{$\S$}\Tor_i^A(M,F)=0.
\] 
Now, for an  $(A/zA)$-module $N$, let $0\to U\to F\to N \to 0$ be an exact sequence of $(A/zA)$-modules, where $F$ is free. By the induction hypothesis and $(\S)$, we   arrive at  an exact sequence
\[
\underbrace{\Tor^A_i(M,F)}_{=0}  \to \Tor^A_i(M,N) \to \underbrace{\Tor^A_{i-1}(M,U)}_{=0}.
\]
Hence $\Tor^A_i(M,N)=0$, as desired.
\end{proof}
 
\begin{proof}[Proof of Theorem \ref{thm_fibercriterion_strong}]
Roughly speaking, we shall cut $\text{Spec} \, A$ by ``hypersurfaces" determined by elements of $R$. The key point is to ensure that the hypotheses for $R$ pass to the quotient rings. 

We note that condition (F1) is equivalent to 
\begin{enumerate}[\quad \rm (F1')]
 \item for every ideal $\lfr$ of $R$, the natural arrow  ${\mathfrak{l}}A\oti_A M\to {\lfr}M$ is bijective.
\end{enumerate}
Given condition (F1), condition (F3b) can be replaced by
\begin{enumerate}[\quad \rm (F3b')]
 \item for every ideal $\pp \in \Spec\,R$, the map $t(A/\pp A)\oti_A M \to (A/\pp A)\oti_A M$ induced by the inclusion $t(A/\pp A) \to A/\pp A$, is injective.
\end{enumerate}
For this, look at the exact sequence
\[
0 \to t(A/\pp A) \to A/\pp A \to (A/\pp A)_{\tf} \to 0.
\]
We have an induced exact sequence
\[
\underbrace{\Tor^A_1(A/\pp A,M)}_{=0} \to \Tor^A_1((A/\pp A)_{\tf},M) \to t(A/\pp A)\oti_A M \to (A/\pp A)\oti_A M.
\]
Hence (F3b) can be replaced by (F3b').

\noindent\textit{Step 1} (Passage to quotient rings). We claim that the conditions (F1), (F2), (F3a), (F3b') pass to quotients of $R$, namely for any ideal ${\hh}$ of $R$, the corresponding conditions hold for $R/{\mathfrak{h}}, A/{\mathfrak{h}}A$ and $M/{\mathfrak{h}}M$. In detail, this means:
\begin{enumerate}[\quad \rm (F1{$\hh$})]
\item For every ideal ${\mathfrak{l}}\supseteq {\mathfrak{h}}$ of $R$, we have $\Tor^{A/{\mathfrak{h}}A}_1(A/{\mathfrak l}A,M/{\mathfrak{h}}M)=0$;
 \item  The $\left((A/{\mathfrak{h}}A)\oti_{R/{\mathfrak{h}}} \bm k(\qq)\right)$-module $M/{\mathfrak{h}}M\oti_{R/{\mathfrak{h}}} \bm k(\qq)$ is flat for every $\qq\in \Spec(R/{\mathfrak{h}})$;
\end{enumerate}
and  either (F3a{$\hh$}) $A/{\mathfrak{h}}A$ is a noetherian ring, or (F3b'{$\hh$}) $t(\ovl{A}/\qq \ovl{A})\oti_{\ovl{A}} \ovl{M} \to (\ovl{A}/\qq \ovl{A})\oti_{\ovl{A}} \ovl{M}$ is injective for any ideal $\qq\in \Spec(\ovl{R})$. Here $\ovl{R}=R/\hh,\ovl{A}=A/{\hh}A$ and likewise $\ovl{M}=M/{\hh}M$.

Indeed, (F1${\hh}$) is equivalent to 
$$
({\mathfrak{l}}A/{\mathfrak{h}}A) \oti_{A/{\mathfrak{h}}A} M/{\mathfrak{h}}M \simeq {\lfr}M/{\mathfrak{h}}M.
$$
We have $({\mathfrak{l}}A/{\mathfrak{h}}A) \oti_{A/{\mathfrak{h}}A} M/{\mathfrak{h}}M \simeq ({\mathfrak{l}}A/{\mathfrak{h}}A)\oti_A M$. The exact sequence $0\to {\hh}A \to {\mathfrak{l}}A \to {\mathfrak{l}}A/{\mathfrak{h}}A\to 0$ yields a diagram
\[
\begin{xymatrix}
{ &{\mathfrak{h}}A \oti_A M \ar[r] \ar[d]^{\simeq} & {\mathfrak{l}}A\oti_A M \ar[r]\ar[d]^{\simeq} & ({\mathfrak{l}}A/{\mathfrak{h}}A) \oti_A M \ar[r] \ar[d]^{\alpha} & 0\\
0\ar[r] &{\mathfrak{h}}M \ar[r] & {\mathfrak{l}}M \ar[r] & {\mathfrak{l}}M/{\mathfrak{h}}M \ar[r] & 0.
}
\end{xymatrix}
\]
By the Snake Lemma and  (F1), we conclude that $\alpha: ({\mathfrak{l}}A/{\mathfrak{h}}A)\oti_A M\to {\mathfrak{l}}M/{\mathfrak{h}}M$ is an isomorphism.

For any $\qq \in \Spec(R/{\mathfrak{h}})$, we can write $\qq=\pp/{\mathfrak{h}}$, where $\pp\in \Spec\,R$ is a prime ideal containing ${\hh}$. We note that
\begin{align*}
 M/{\mathfrak{h}}M\oti_{R/{\mathfrak{h}}} \bm k(\qq) \simeq M/{\mathfrak{h}}M\oti_{R/{\mathfrak{h}}} (R/\pp)_\pp \simeq M\oti_R {\boldsymbol k}(\pp).
\end{align*}
Thus (F2{$\hh$}) follows from (F2). Clearly (F3a{$\hh$}) follows from (F3a). For (F3b'{$\hh$}), letting $\qq=\pp/{\hh}$ as above, the map
\[
t(\ovl{A}/\qq \ovl{A})\oti_{\ovl{A}} \ovl{M} \to (\ovl{A}/\qq \ovl{A})\oti_{\ovl{A}} \ovl{M}
\]
is nothing but
\[
t(A/\pp A) \oti_A M \to (A/\pp A)\oti_A M.
\]

\noindent\textit{Step 2}. We prove by noetherian induction that the hypotheses (F1), (F2), and either (F3a) or (F3b), imply that $M$ is a flat $A$-module.

We first reduce to \textit{the case where $R$ is a domain}. For this, note that for any non-zero element $z\in R$ and any $(A/zA)$-module $N$, we have $\Tor^A_1(M,N)=0$. Indeed, as $z\neq 0$, by Step 1 and the hypothesis of the noetherian induction, $M/zM$ is a flat $(A/zA)$-module. By (F1), $\Tor^A_1(A/zA,M)=0$. Hence Lemma \ref{lem_reduction_to_domains} implies that $\Tor^A_1(M,N)=0$ for any $(A/zA)$-module $N$.
If $R$ is not a domain, then $yz=0$ for some non-zero elements $y,z\in R$. Using the exact sequence
\[
0\to yN \to N \to N/yN \to 0,
\]
and the fact that $yN, N/yN$ are annihilated by non-zero elements $z,y$, respectively, we conclude that $\Tor^A_1(M,N)=0$ for any $A$-module $N$. Thus $M$ is a flat $A$-module if $R$ is not a domain.

Now assume that $R$ is a domain. Let $K$ be its quotient field.  To show the equality $\Tor^A_1(M,N)=0$, it suffices to treat $t(N)$ and $N_{\tf}$ in place of $N$, namely we can assume that either  $N=t(N)$ or $t(N)=0$. Consider two cases.

\noindent \textbf{Case 1}. Assume that $N$ is $R$-torsion, $N=t(N)$.  Since Tor commutes with direct limits, we can assume that $N$ is a finitely generated $A$-module. Hence there exists $z\neq 0$,  $zN=0$. This already suffices for the equality $\Tor^A_1(M,N)=0$, thanks to Lemma \ref{lem_reduction_to_domains}.

\noindent \textbf{Case 2}. Assume that $N$ is $R$-torsionfree. We consider two subcases corresponding to assumption (F3a) and (F3b). 

If (F3a) holds, then $A$ is noetherian. For any $z\in R\smallsetminus \{0\}$, we have an exact sequence
\[
0\to N \stackrel {\cdot z}\to N \to N/zN \to 0.
\]
Observe that by the hypotheses of the noetherian induction, the assumptions of Lemma \ref{lem_reduction_to_domains} are fulfilled. Hence $\Tor^A_1(M,N/zN)=\Tor^A_2(M,N/zN)=0$. We get an exact sequence
\[
\underbrace{\Tor^A_2(M,N/zN)}_{=0} \to \Tor^A_1(M,N) \stackrel {\cdot z}\to \Tor^A_1(M,N) \to \underbrace{\Tor^A_1(M,N/zN)}_{=0}.
\]
Thus $\Tor^A_1(M,N) \stackrel {\cdot z}\to \Tor^A_1(M,N)$ is an isomorphism for any $z\in R\smallsetminus \{0\}$. On the other hand, denoting the functor $\bullet\longmapsto\bullet\oti_RK$ by   $(\bullet)_K$,   then $\Tor^A_1(M,N)\oti_R K\simeq \Tor^{A_K}_1(M_K,N_K)=0$, as per (F2), $M_K$ is flat over $A_K$. Hence any element of $\Tor^A_1(M,N)$ is annihilated by \emph{some} $z\in R\smallsetminus \{0\}$. In particular, $\Tor^A_1(M,N)=0$.

If (F3b) holds, then $\Tor^A_1((A/\pp A)_{\tf},M)=0$ for every $\pp \in \Spec\,R$, which implies
 $\Tor^A_1(A_{\tf},M)=0$. 
Let $F\to N$ be a surjection from a free $A$-module. Since $N$ is $R$-torsion, there is an induced map $F_{\tf} \to N$, which remains to be surjective. 
Consider the commutative diagram with exact rows where the vertical arrows are localization maps:
\[
\xymatrix{
0\ar[r] & U \ar[d] \ar[r] & 
F_{\tf}\ar[r]\ar[d] & N \ar[r] \ar[d]  & 0\\
0\ar[r] & U_K \ar[r] & (F_{\tf})_K \ar[r] & N_K \ar[r]  & 0.
}
\]
Notice that $F_{\tf}$ is a direct sum of copies of $A_{\tf}$.
Hence, (F3b) and the long exact sequence of Tor yields
\[
\xymatrix{
 0\ar[r] & \Tor^A_1(M,N) \ar[d]_{\theta} \ar[r]^{\beta} & M\oti_A U \ar[d]^{\alpha}\\
& \Tor^A_1(M,N_K) \ar[r] & M\oti_A U_K\ar[r]^-{\gamma}& M\oti_A (F_{\tf})_K.
}
\]

Since $U$, as an $R$-module, is  torsionfree (being a submodule of $F_{\tf}$), we have an exact sequence
\[
0\to U \to U_K \to U_K/U \to 0.
\]
On the other hand, the module $U_K/U$ is torsion, so Case 1 implies that $\Tor^A_1(M,U_K/U)=0$. Hence the map $\alpha$ of the diagram is injective.
Since $M_K$ is flat, the map
$\gamma: M\oti_A U_K
\to M\oti_A(F_{\tf})_K$ is injective.
 Since $\beta$ is also injective
 this forces $\Tor^A_1(M,N)=0$.
The induction and the proof are completed.
\end{proof}
Finally we present the

\begin{proof}[Proof of Theorem \ref{thm_fibercriterion_prime}] 
We proceed by noetherian induction. Assume that the statement is true for all proper quotient rings of $R$. We prove that it is true for $R$, namely $M$ is a flat $A$-module if $R, A, M$ satisfy the hypotheses (i), (ii), and (iii).

As in the proof of Theorem \ref{thm_fibercriterion_strong}, for any non-zero prime $\pp\in \Spec\,R$, the hypotheses (i)--(iii) pass to the map $R/\pp \to A/\pp A$ and the module $M/\pp M$. Hence by the induction hypothesis, $M/\pp M$ is $(A/\pp A)$-flat for any such $\pp$.

We prove the hypotheses (i)--(iii) imply that 
$\Tor^A_1(M,N)=0$ for any $A$-module $N$ which is annihilated by a non-zero ideal $\g l$ of $R$. 
Since $R$ is noetherian, $\mathfrak l$ contains a finite product of (not necessarily distinct) non-zero prime ideals $\pp_1\cdots \pp_n$. By filtering, it is led to showing  that $\Tor^A_1(N,M)=0$ if $N$ is annihilated by a non-zero prime ideal $\pp\in \Spec\,R$. This follows from Lemma \ref{lem_localcriterion_generalform}, as, by hypothesis, $\Tor^A_1(A/\pp A,M)=0$,  $M/\pp M$ is flat over $A/\pp A$.

Now (i) can be replaced by the stronger condition $\Tor^A_1(A/{\lfr}A,M)=0$ for any non-zero ideal $\lfr\subset R$. Thus we are in the situation of Theorem \ref{thm_fibercriterion_strong}, and so $M$ is a flat $A$-module. This concludes the noetherian induction and the proof. 
\end{proof}
An immediate consequence of Theorem \ref{thm_fibercriterion_prime} is
\begin{cor}
\label{cor_fibercriterion_prime}
Let $R$ be a noetherian ring and  $f:R\to A$ a ring map. Let $M$ be an $A$-module. Then $M$ is a flat $A$-module if and only if for every  $\pp \in \Spec\,R$, the following conditions hold:
\begin{enumerate}[\quad \rm (i)]
 \item \textup{(Tor vanishing over primes)}  $\Tor^A_1(A/\pp A,M)=\Tor^A_1((A/\pp A)_{\tf},M)=0$;
 \item \textup{(Fiber flatness)} The $\left(A\oti_R {\boldsymbol k}(\pp)\right)$-module $M\oti_R {\boldsymbol k}(\pp)$ is  flat.
\end{enumerate}
\end{cor}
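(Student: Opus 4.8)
The plan is to prove the two implications separately, noting in advance that essentially all of the substance resides in Theorem \ref{thm_fibercriterion_prime}, so that the corollary is a repackaging of that theorem together with a trivial converse.

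For the forward implication (flatness $\Rightarrow$ (i), (ii)), I would argue that everything is formal. If $M$ is $A$-flat, then $\Tor^A_1(N,M)=0$ for \emph{every} $A$-module $N$; applying this to $N=A/\pp A$ and to $N=(A/\pp A)_{\tf}$ yields condition (i) at once, for every $\pp\in\Spec\,R$. For condition (ii), I would invoke the stability of flatness under base change: writing $A'=A\oti_R\bm k(\pp)$, the module $M\oti_R\bm k(\pp)\simeq M\oti_A A'$ is the base change of the flat $A$-module $M$ along the ring map $A\to A'$, and is therefore flat over $A'$. This is a standard property of flat modules.

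For the reverse implication ((i), (ii) $\Rightarrow$ flatness), I would simply match the hypotheses to those of Theorem \ref{thm_fibercriterion_prime}. The corollary's condition (ii) is \emph{verbatim} condition (ii) of that theorem. The corollary's condition (i) bundles two Tor vanishings: the vanishing $\Tor^A_1(A/\pp A,M)=0$ supplies condition (i) of the theorem (via the alternative formulation involving $\g pA\oti_AM\to\g pM$), while the vanishing $\Tor^A_1((A/\pp A)_{\tf},M)=0$ supplies the second alternative in condition (iii), so that no noetherian hypothesis on $A$ is required. Hence Theorem \ref{thm_fibercriterion_prime} applies directly and gives that $M$ is $A$-flat.

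Since the corollary is precisely the biconditional obtained from Theorem \ref{thm_fibercriterion_prime} and its formal converse, I do not anticipate any genuine obstacle. The only step deserving an explicit line is the base-change statement for fiber flatness in the forward direction; everything else is a matter of correctly identifying the hypotheses with those of the theorem just proved.
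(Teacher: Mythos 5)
Your proposal is correct and coincides with the paper's treatment: the paper presents this corollary as an immediate consequence of Theorem \ref{thm_fibercriterion_prime}, exactly as you do, with the reverse implication obtained by matching conditions (i)--(ii) to the theorem's hypotheses (using the torsionfree Tor vanishing to satisfy alternative (F3b)-type condition (iii)), and the forward implication being the formal facts that flatness kills $\Tor_1$ and is stable under the base change $A\to A\oti_R\bm k(\pp)$. Nothing is missing.
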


\section{Fiber criteria for purity of maps}
\label{sect_purity}

Let us now apply the our results   to study pure submodules. In this section, $A$ stands for a ring.

\begin{dfn}[Pure morphisms]An arrow  $\varphi:M\to N$ of $A$--modules is said to be pure if for any $A$--module $E$, the induced arrow $\varphi\oti_AE:M\oti_AE\to N\oti_AE$ is injective.  
\end{dfn} 

A simple application of the long exact sequence  for Tor proves the following: 
\begin{lem}
\label{lem_pure_equals_flatcoker}
Let $\varphi:M\to N$ be an \emph{injective} arrow of $A$--modules and assume that $N$ is \emph{flat}. Then a necessary and sufficient condition for $\varphi$ to be pure is that $\mathrm{Coker}(\varphi)$ is flat.   \qed
\end{lem}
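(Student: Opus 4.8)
The plan is to use the long exact sequence for $\Tor$ applied to the short exact sequence determined by $\varphi$. Since $\varphi:M\to N$ is injective, we set $C=\Coker(\varphi)$ and obtain a short exact sequence
\[
0\longrightarrow M\stackrel{\varphi}\longrightarrow N\longrightarrow C\longrightarrow 0.
\]
For an arbitrary $A$-module $E$, tensoring with $E$ yields the long exact sequence
\[
\cdots\longrightarrow \Tor_1^A(N,E)\longrightarrow \Tor_1^A(C,E)\longrightarrow M\oti_A E\stackrel{\varphi\oti\mathrm{id}}\longrightarrow N\oti_A E\longrightarrow C\oti_A E\longrightarrow 0.
\]
The whole argument hinges on reading injectivity of $\varphi\oti_A E$ off this sequence.

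For the sufficiency direction, I would assume $C$ is flat. Then $\Tor_1^A(C,E)=0$ for every $E$, so the exact sequence above shows immediately that $\varphi\oti_A E$ is injective for all $E$; hence $\varphi$ is pure. For the necessity direction, I would assume $\varphi$ is pure, so that $\varphi\oti_A E$ is injective for every $E$. Because $N$ is flat, we have $\Tor_1^A(N,E)=0$; combining this with the injectivity of $\varphi\oti_A E$, the segment
\[
\Tor_1^A(N,E)\longrightarrow \Tor_1^A(C,E)\longrightarrow M\oti_A E\stackrel{\varphi\oti\mathrm{id}}\longrightarrow N\oti_A E
\]
forces $\Tor_1^A(C,E)$ to sit between a zero module and an injection, whence $\Tor_1^A(C,E)=0$. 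Since $E$ was arbitrary, the vanishing of $\Tor_1^A(C,-)$ gives that $C$ is flat.

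The argument is entirely formal and I do not anticipate a genuine obstacle; the only point demanding a little care is the bookkeeping of which $\Tor$-terms vanish and for which direction, in particular the crucial use of the flatness of $N$ (giving $\Tor_1^A(N,E)=0$) precisely in the necessity direction, so that the vanishing of $\Tor_1^A(C,E)$ can be extracted from purity. Since the statement asserts vanishing of $\Tor_1^A(C,-)$ on \emph{all} modules $E$, it suffices to note that this is exactly the definition of flatness of $C$, so no further reduction (e.g. to finitely generated or cyclic $E$) is needed.
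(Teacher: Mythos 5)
Your proof is correct and follows exactly the route the paper indicates: it applies the long exact sequence of $\Tor$ to $0\to M\to N\to \Coker(\varphi)\to 0$, using flatness of $N$ to kill $\Tor_1^A(N,E)$ in the necessity direction and $\Tor_1^A(\Coker(\varphi),E)=0$ in the sufficiency direction. Both directions are handled soundly, so there is nothing to add.
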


We also have the following criterion for pure maps.


\begin{prop}
[Fiber criterion for pure maps] 
\label{prop_fiber_puremap} Let $R$ be a noetherian ring and $R\to A$ a ring homomorphism. Let $\varphi:M\to N$ be a map of  $A$--modules, where $N$ is flat over $A$. Then,  a necessary and sufficient condition for $\varphi$ to be a pure map of $A$-modules, is that for all $\pp\in\Spec\,R$:
\begin{enumerate}[\quad \rm (i)]
\item The map $\varphi \oti_A (A/\pp A): M\oti_A (A/\pp A)\to N\oti_A (A/\pp A)$ is injective;
\item The map $\varphi \oti_A (A/\pp A)_{\tf}: M\oti_A (A/\pp A)_{\tf}\to N\oti_A (A/\pp A)_{\tf}$ is injective;
\item The map  $\varphi\oti_R {\boldsymbol k}(\pp):M\oti_R {\boldsymbol k}(\pp)\to N\oti_R {\boldsymbol k}(\pp)$ of $(A\oti_R {\boldsymbol k}(\pp))$-modules is pure. 
\end{enumerate}
\end{prop}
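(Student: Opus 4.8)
The plan is to reduce purity of $\varphi$ to flatness of its cokernel and then feed conditions (i)--(iii) into the fiber criterion Corollary \ref{cor_fibercriterion_prime}.

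\emph{Forward direction.} Suppose $\varphi$ is pure. Conditions (i) and (ii) are immediate, being the instances $E=A/\pp A$ and $E=(A/\pp A)_{\tf}$ of the injectivity of $\varphi\oti_A E$. For (iii) I would use that purity is stable under base change: for any $(A\oti_R\bm k(\pp))$-module $G$ one has the identification $(\varphi\oti_R\bm k(\pp))\oti_{A\oti_R\bm k(\pp)}G=\varphi\oti_A G$, which is injective because $\varphi$ is pure; hence $\varphi\oti_R\bm k(\pp)$ is pure.

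\emph{Backward direction, Step 1: $C:=\Coker(\varphi)$ is flat over $A$.} Write $I=\Img(\varphi)$ and use the exact sequence $0\to I\to N\to C\to 0$ with $N$ flat, which gives $\Tor_1^A(E,C)\cong\ker(I\oti_A E\to N\oti_A E)$ for every $E$. Since $M\oti_A E\to I\oti_A E$ is surjective, injectivity of $\varphi\oti_A E$ forces injectivity of $I\oti_A E\to N\oti_A E$; taking $E=A/\pp A$ and $E=(A/\pp A)_{\tf}$, conditions (i) and (ii) yield $\Tor_1^A(A/\pp A,C)=\Tor_1^A((A/\pp A)_{\tf},C)=0$. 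For the remaining fiber-flatness hypothesis, condition (iii) says $\varphi\oti_R\bm k(\pp)$ is pure with flat target $N\oti_R\bm k(\pp)$ (a base change of the flat module $N$); by Lemma \ref{lem_pure_equals_flatcoker} its cokernel, which is $C\oti_R\bm k(\pp)$ by right exactness of $\oti$, is flat over $A\oti_R\bm k(\pp)$. Thus all hypotheses of Corollary \ref{cor_fibercriterion_prime} hold for $C$, and $C$ is flat over $A$.

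\emph{Backward direction, Step 2: $\varphi$ is injective, hence pure.} Once $C$ is flat, $I$ is $A$-flat and $I\hookrightarrow N$ is pure (Lemma \ref{lem_pure_equals_flatcoker}), so from $0\to K\to M\to I\to 0$ with $K:=\ker\varphi$ I get $\ker(\varphi\oti_A E)\cong K\oti_A E$ for every $E$. Taking $E=A/\pp A$, condition (i) gives $K/\pp K=0$, that is $K=\pp K$, for every $\pp\in\Spec R$. Now $R$ noetherian implies that $(0)$ contains a product of primes $\pp_1\cdots\pp_n$; iterating $K=\pp_1 K=\pp_1(\pp_2 K)=\cdots=\pp_1\cdots\pp_n K=0$ gives $K=0$. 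Therefore $\varphi$ is injective with flat cokernel, and Lemma \ref{lem_pure_equals_flatcoker} shows $\varphi$ is pure.

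\emph{Main obstacle.} The substantive step is Step 1: repackaging the three ``purity'' hypotheses as the Tor-vanishing and fiber-flatness inputs of Corollary \ref{cor_fibercriterion_prime} for $C$, the delicate point being the fiberwise identification $C\oti_R\bm k(\pp)=\Coker(\varphi\oti_R\bm k(\pp))$ together with the factorization of $\varphi\oti_A E$ through $I\oti_A E$. The place where a naive argument would invoke finiteness (a Nakayama-type conclusion from $K=\pp K$) is Step 2; this is circumvented cleanly by the product-of-primes trick, which is exactly what lets us drop noetherian/finite hypotheses on $A$ and on the modules.
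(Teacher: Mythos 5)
Your proof is correct. The reduction in your Step 1 is exactly the paper's: it also writes $W=\Img\varphi$, $P=\Coker\varphi$, uses the factorization of $\varphi\oti_A E$ through $W\oti_A E$ to turn (i) and (ii) into $\Tor^A_1(A/\pp A,P)=\Tor^A_1((A/\pp A)_{\tf},P)=0$, obtains flatness of $P\oti_R\bm k(\pp)$ over $A\oti_R\bm k(\pp)$ from (iii) via Lemma \ref{lem_pure_equals_flatcoker}, and invokes Theorem \ref{thm_fibercriterion_prime} (your Corollary \ref{cor_fibercriterion_prime} is its restatement) to conclude that $P$, hence also $W$, is $A$-flat and $W\hookrightarrow N$ is pure. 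Where you genuinely depart from the paper is in proving injectivity of $\varphi$. The paper shows $U:=\Ker\varphi=0$ by noetherian induction on $R$: the domain case is condition (i) at $\pp=(0)$; otherwise it takes primes with $\pp_1\cdots\pp_n=(0)$ and $n\ge 2$ minimal, verifies that hypotheses (i)--(iii) descend to $R/(\pp_2\cdots\pp_n)$ so that the inductive hypothesis gives $U\subset(\pp_2\cdots\pp_n)M$ and hence $\pp_1U=0$, and finally kills $U/\pp_1U$ using flatness of $W$. You instead exploit the output of Step 1 once and for all: flatness of $I$ together with purity of $I\hookrightarrow N$ identifies $\Ker(\varphi\oti_AE)$ with $K\oti_AE$ for \emph{every} $E$, so condition (i) yields $K=\pp K$ simultaneously for all $\pp\in\Spec R$, and the product-of-primes trick ($\pp_1\cdots\pp_n=(0)$ in a noetherian ring, so $K=\pp_1(\pp_2(\cdots(\pp_n K)))=0$) finishes with no induction at all. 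The local computation at a single prime is the same in both arguments (it is the paper's last display in that proof); what your version buys is the elimination of the noetherian induction and of the somewhat tedious verification that the hypotheses pass to quotient rings, at no cost in generality.
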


\begin{rmk}
If $A$ is $R$-flat, then    hypothesis (ii) in Proposition \ref{prop_fiber_puremap} is already assured by (i) and is therefore superfluous.
\end{rmk}

\begin{proof}
 Necessity is clear and we move on to  establish  sufficiency. Let $W=\Img \varphi$ and $P=\Coker \varphi$. These modules fit into the exact sequence $0\to W \to N \to P \to 0$.

\noindent\textit{Step 1}. The hypothesis implies that for each $\pp \in \Spec\,R$, the map $W/\pp W\to N/\pp N$ is injective. Flatness of $N$ yields an exact sequence
\[
\underbrace{\Tor^A_1(A/\pp A,N)}_{=0} \to \Tor^A_1(A/\pp A,P) \to W/\pp W \to N/\pp N
\]
so that  $\Tor^A_1(A/\pp A,P)=0$.
Similarly, $\Tor^A_1((A/\pp A)_{\tf},P)=0$. 
By (iii), the map of $A\oti_R\boldsymbol k(\g p)$-modules $M\oti_R {\boldsymbol k}(\pp)\to N\oti_R {\boldsymbol k}(\pp)$ is pure with cokernel $P\oti_R {\boldsymbol k}(\pp)$, and since $N\oti_R {\boldsymbol k}(\pp)$ is flat over $A\oti_R {\boldsymbol k}(\pp)$, Lemma \ref{lem_pure_equals_flatcoker} says that $P\oti_R {\boldsymbol k}(\pp)$ is a flat module over $A\oti_R {\boldsymbol k}(\pp)$. All conditions required to apply   Theorem \ref{thm_fibercriterion_prime} are in place  and we  conclude that $P$ is a flat $A$-module.

Since $0\to W \to N \to P \to 0$ is an exact sequence, and $N$ and $P$ are flat $A$-modules, so is $W$; moreover, $W\to N$ is a pure map. It remains to show that $M=W$, namely, that  $U:=\Ker \varphi$ is the zero module.

\noindent\textit{Step 2}.  We proceed by noetherian induction to show that $\varphi$ is a pure map (equivalently, is injective). If $R$ is a domain, then $\pp=(0)\in \Spec\,R$, and by condition (i), $U=0$. Assume that $R$ is not a domain. As $R$ is noetherian, there exist finitely many prime ideals $\pp_1,\ldots,\pp_n$ of $R$ whose product is zero. Now $n\ge 2$, and we choose $n$ to be smallest possible.

Denote $\pp_2\cdots \pp_n$ by $\lfr$; this is a non-zero ideal. It is not hard to see that the hypotheses of Proposition \ref{prop_fiber_puremap} pass to the maps $R/{\mathfrak l} \to A/{\mathfrak l}A$ and  $M/{\mathfrak l}M \to N/{\mathfrak l}N$. Since $\lfr\neq (0)$, by the induction hypothesis, $M/{\mathfrak l}M \to N/{\mathfrak l}N$ is injective. Hence $U=\Ker \varphi$ is contained in ${\lfr}M$. Denoting $\pp=\pp_1$, this implies that $\pp U=0$ as $\pp {\lfr}=0$.

Consider the exact sequence
\[
0\to U \to M \to W \to 0.
\]
Since $W$ is a flat $A$-module, we get an exact sequence
\[\underbrace{
\Tor^A_1(A/\pp A,W)}_{=0} \to U/\pp U \to M/\pp M \to W/\pp W\to 0. 
\]
The injective map $M/\pp M \to N/\pp N$ factors through $M/\pp M \to W/\pp W$, hence the last map is injective. Therefore $U/\pp U=0$, and thus $U=\pp U=0$, as desired.
\end{proof}

\begin{cor}
\label{cor_fibercrit_puremap}
Let $R$ be a noetherian ring and $R\to A$ be a ring map such that for every $\pp \in \Spec\,R$, $A/\pp A$ is a torsionfree $(R/\pp)$-module. Let $\varphi: M\to N$ be a linear map of $A$-modules where $N$ is flat over $A$. Then the following are equivalent:
\begin{enumerate}[\quad \rm (1)]
 \item The map of $A$-modules $\varphi$ is   pure;
 \item For every $\pp \in \Spec\,R$, $M/\pp M$ is a torsionfree $(R/\pp)$-module, and $\varphi\oti_R {\boldsymbol k}(\pp): M\oti_R {\boldsymbol k}(\pp) \to N\oti_R {\boldsymbol k}(\pp)$ is a pure map of $(A\oti_R {\boldsymbol k}(\pp))$-modules.
\end{enumerate}
\end{cor}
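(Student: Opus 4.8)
The plan is to deduce the corollary from the fiber criterion for pure maps, Proposition \ref{prop_fiber_puremap}, after observing that the torsionfree hypothesis on the fibers $A/\pp A$ collapses two of its three conditions into one. Indeed, since $A/\pp A$ is torsionfree over $R/\pp$ for every $\pp$, we have $(A/\pp A)_{\tf}=A/\pp A$, so conditions (i) and (ii) of that proposition coincide, and it reads as follows: $\varphi$ is pure if and only if, for every $\pp\in\Spec\,R$, the map $\ovl\varphi\colon M/\pp M\to N/\pp N$ is injective and $\varphi\oti_R\bm k(\pp)$ is pure over $A\oti_R\bm k(\pp)$. It therefore remains only to show that, \emph{granted purity of all the fibers $\varphi\oti_R\bm k(\pp)$}, the injectivity of $\ovl\varphi$ is equivalent to the torsionfreeness of $M/\pp M$ over $R/\pp$. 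This is the single place where the hypothesis on $A/\pp A$ is genuinely used, and it is the heart of the matter.

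To establish this equivalence I would fix $\pp$ and abbreviate $\ovl R=R/\pp$ (a domain), $K=\bm k(\pp)$ (its fraction field), $\ovl A=A/\pp A$, $\ovl M=M/\pp M$, $\ovl N=N/\pp N$, and $\ovl\varphi\colon\ovl M\to\ovl N$. Two preliminary facts are needed. First, $\ovl N$ is $\ovl R$-torsionfree: since $N$ is $A$-flat, $\ovl N$ is $\ovl A$-flat, and as $\ovl A$ is $\ovl R$-torsionfree, multiplication by any nonzero $r\in\ovl R$ is injective on $\ovl A$, hence (by flatness) injective on $\ovl N$; thus the localization map $\ovl N\to\ovl N\oti_{\ovl R}K$ is injective. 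Second, $\varphi\oti_R\bm k(\pp)$ is nothing but $\ovl\varphi\oti_{\ovl R}K\colon \ovl M\oti_{\ovl R}K\to\ovl N\oti_{\ovl R}K$, and being pure it is in particular injective (take the coefficient ring itself as test module).

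The equivalence then falls out of a single commutative square whose vertical arrows are the localization maps $\ovl M\to\ovl M\oti_{\ovl R}K$ and $\ovl N\to\ovl N\oti_{\ovl R}K$ and whose horizontal arrows are $\ovl\varphi$ and $\ovl\varphi\oti_{\ovl R}K$. Computing the kernel of the composite $\ovl M\to\ovl N\oti_{\ovl R}K$ in two ways gives the identity
\[
\Ker(\ovl\varphi)=t(\ovl M):
\]
going through $\ovl N$, the right vertical arrow is injective, so the kernel is $\Ker\ovl\varphi$; going through $\ovl M\oti_{\ovl R}K$, the bottom arrow is injective, so the kernel equals $\Ker\bigl(\ovl M\to\ovl M\oti_{\ovl R}K\bigr)=t(\ovl M)$, the $\ovl R$-torsion submodule. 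Hence $\ovl\varphi$ is injective precisely when $\ovl M$ is torsionfree, which is exactly the asserted equivalence. Since the fiber-purity condition (iii) appears on both sides, replacing ``$\ovl\varphi$ injective'' by ``$\ovl M$ torsionfree'' in the restated Proposition \ref{prop_fiber_puremap} yields (1)$\Leftrightarrow$(2). The only mild subtleties worth checking are the identification of $\varphi\oti_R\bm k(\pp)$ with $\ovl\varphi\oti_{\ovl R}K$ and the compatibility of the two factorizations of $\ovl M\to\ovl N\oti_{\ovl R}K$; both are routine naturality of base change. I do not anticipate a serious obstacle beyond correctly matching the hypotheses of Proposition \ref{prop_fiber_puremap}, the substantial work having already been carried out there.
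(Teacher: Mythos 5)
Your proposal is correct and follows essentially the same route as the paper: both reduce to Proposition \ref{prop_fiber_puremap}, observe that its conditions (i) and (ii) coincide because $(A/\pp A)_{\tf}=A/\pp A$, and then work in the same localization square, using flatness of $N$ over $A$ together with torsionfreeness of $A/\pp A$ to see that $N/\pp N\to N\oti_R\bm k(\pp)$ is injective. The only (cosmetic) difference is that you package the two directional diagram chases of the paper into the single identity $\Ker(\ovl\varphi)=t(M/\pp M)$, which is a clean way of presenting the same argument.
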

\begin{proof}
(1) $\Longrightarrow$ (2): If $\varphi$ is pure, then clearly for any $\pp \in \Spec\,R$, $\varphi\oti_R {\boldsymbol k}(\pp)$ is a pure map of $(A\oti_R {\boldsymbol k}(\pp))$-modules.
Consider the commutative diagram with the vertical maps induced by localization
\[
\xymatrix{
M\oti_R (R/\pp) \ar[r]^{\alpha} \ar[d]_{\pi_M} & N\oti_R (R/\pp) \ar[d]^{\pi_N}\\
M\oti_R {\boldsymbol k}(\pp) \ar[r]_{\beta} & N\oti_R {\boldsymbol k}(\pp).
}
\]
Since $\varphi$ is pure, $\alpha$ is injective. The map $\pi_N$ can be identified with $N\oti_A (A/\pp A) \to N\oti_A (A\oti_R {\boldsymbol k}(\pp))$. Since $A/\pp A$ is $(R/\pp)$-torsionfree, $A/\pp A \to A\oti_R {\boldsymbol k}(\pp)$ is injective. As $N$ is flat over $A$, we infer that $\pi_N$ is injective. Hence $\pi_N \circ \alpha$, and hence $\pi_M$ is injective. This means that $M/\pp M$ is a torsionfree $(R/\pp)$-module.

(2) $\Longrightarrow$ (1): As $A/\pp A$ is torsionfree over $R/\pp$, the conditions (i) and (ii) of Proposition \ref{prop_fiber_puremap} coincide. It suffices to check that $M\oti_R (R/\pp) \xrightarrow{\varphi \oti_R (R/\pp)} N\oti_R (R/\pp)$ is injective. Again consider the above diagram. Since $M/\pp M$ is torsionfree over $R/\pp$, the map $\pi_M$ is injective. By the hypothesis, $\beta$ is injective. Hence $\alpha=\varphi \oti_R (R/\pp)$ is also injective, as desired. 
\end{proof}

\begin{cor}
\label{cor_fiberpurecriterion_onebase}
Let $R$ be a noetherian ring. Let $\varphi: M\to N$ be a linear map of $R$-modules where $N$ is flat. Then the following are equivalent: 
\begin{enumerate}[\quad \rm (1)]
 \item The map $\varphi$ is pure;
 \item For every $\pp \in \Spec\,R$, the $(R/\pp)$-module $M/\pp M$ is   torsionfree  and the map $\varphi\oti_R {\boldsymbol k}(\pp): M\oti_R {\boldsymbol k}(\pp) \to N\oti_R {\boldsymbol k}(\pp)$ is injective.
\end{enumerate}
\end{cor}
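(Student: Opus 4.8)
The plan is to obtain Corollary \ref{cor_fiberpurecriterion_onebase} as the special case $A=R$ (with $f$ the identity map $R\to R$) of Corollary \ref{cor_fibercrit_puremap}. First I would verify that the standing hypothesis of the latter is satisfied automatically: for every $\pp\in\Spec\,R$ we have $A/\pp A=R/\pp$, and a domain is torsionfree as a module over itself, so $A/\pp A$ is indeed a torsionfree $(R/\pp)$-module. Thus Corollary \ref{cor_fibercrit_puremap} applies with no further restriction on $R$, and the flatness of $N$ over $A=R$ is exactly the flatness of $N$ assumed here.

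It then remains to match the two conditions in part (2) of each statement. The requirement that $M/\pp M$ be torsionfree over $R/\pp$ is identical in both. For the fibre condition, the key observation is that $A\oti_R\bm k(\pp)=\bm k(\pp)$ is a field, so $\varphi\oti_R\bm k(\pp)$ is simply a $\bm k(\pp)$-linear map of vector spaces. Over a field, purity and injectivity of a linear map coincide: one implication is immediate by tensoring with the field itself (recovering $\varphi\oti_R\bm k(\pp)$ up to isomorphism), and the converse holds because any injection of vector spaces splits, whence tensoring by an arbitrary module preserves injectivity. Consequently the clause ``$\varphi\oti_R\bm k(\pp)$ is pure as a map of $(A\oti_R\bm k(\pp))$-modules'' is equivalent to ``$\varphi\oti_R\bm k(\pp)$ is injective'', which is precisely the condition stated in the present corollary.

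With these two identifications in hand, condition (2) of the two corollaries is literally the same, and the equivalence (1)$\Leftrightarrow$(2) for the present statement is read off directly from Corollary \ref{cor_fibercrit_puremap}. I expect essentially no obstacle here: the single substantive point is the elementary equivalence of purity and injectivity over a field, and I would record it explicitly in order to keep the reduction self-contained and to justify that the field-coefficient hypothesis in Corollary \ref{cor_fibercrit_puremap} simplifies exactly as claimed.
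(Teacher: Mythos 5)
Your proposal is correct and follows exactly the paper's own route: specializing Corollary \ref{cor_fibercrit_puremap} to $A=R$ and observing that over the field $\bm k(\pp)$ purity coincides with injectivity. The only difference is that you spell out the two (easy) verifications — that $R/\pp$ is torsionfree over itself and that vector-space injections split — which the paper leaves implicit.
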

\begin{proof}
This follows from Corollary \ref{cor_fibercrit_puremap} by setting $A=R$. Note that ${\boldsymbol k}(\pp)$ is a field, and for maps between vector spaces over a field, purity is the same as injectivity.
\end{proof}

\section{Counterexamples}
\label{sect_counterexamples}

In view of Theorems \ref{thm_fibercriterion_prime} and \ref{thm_fibercriterion_strong}, it is     natural to ask the following
\begin{question}
\label{question_fibercrit}
Let $R$ be noetherian and 
$f:R\to A$ be a   homomorphism. Let $M$ be an $A$-module such that the following conditions are simultaneously fulfilled: 
\begin{enumerate}[\quad \rm (1)]
\item As an $R$-module, $M$ is  flat;
\item for every $\pp \in \Spec\,R$, $\Tor^A_1(A/\pp A,M)=0$;
 \item for every $\pp \in \Spec\,R$, $M\oti_R {\boldsymbol k}(\pp)$ is a flat $\left(A\oti_R {\boldsymbol k}(\pp)\right)$-module.
 \end{enumerate}
 Is it true that $M$ is a flat $A$-module?
 (In other words, we exchange hypothesis (i) of Theorem \ref{thm_fibercriterion_prime} wit the hypothesis of $M$ being flat over $R$.)
\end{question}

The next example shows that this is false in general. 

\begin{ex}
Let $k$ be a field, and $R=k\llbracket t\rrbracket$ the power series ring in one variable $t$. In the ring $S=R[x_1,x_2,\ldots]$ consider the following ideals  
\[
\mathfrak H=(t^ix_i-t^{i+1}x_{i+1},x_i-t^2x_{i+2}: i\ge 1)\subset\mathfrak I=(x_i-tx_{i+1}:i\ge 1).
\]
Let $A$ be the quotient $S/\mathfrak H$ and  $M$ be the $A$-module $S/\mathfrak I$ (recall that   $\mathfrak H\subset\mathfrak I$). We claim that:  
\begin{enumerate}
[\quad\rm(1)]
\item As an $R$-module, $M$ is   flat;
\item for every $\pp \in \Spec\,R$, we have $\Tor^A_1(A/\pp A,M)=0$;
 \item for every $\pp \in \Spec\,R$, $M\oti_R k(\pp)$ is a flat $\left(A\oti_R k(\pp)\right)$-module;
 \item $M$ is not a flat $A$-module.
\end{enumerate}

As explained in   Remark \ref{rmk_torsionfree_hypothesis}, condition (2) above is a consequence of condition (1), so we only deal with (1), (3) and (4). Letting $K$ be the field of fractions of $R$, it suffices to establish the following.
\begin{enumerate}[\quad \rm (i)]
\item The $R$-module $M$ is torsionfree, or alternatively, $\Tor^R_1(R/(t),M)=0$.
 \item $M\oti_R K$ is a flat $(A\oti_R K)$-module and $M/tM$ is a flat $(A/tA)$-module.
 \item $\Tor^A_1(A_{\tf}, M)\neq 0$.
 \end{enumerate}
 For this, we have the following claims:
\begin{enumerate}[\quad\rm (a)]
\item In $S$, we have $(\mathfrak I:t)_S=\mathfrak I$.
\item $t(A)=\Ker(A\to A\oti_R K)=\mathfrak I/\mathfrak H$, so $M=A/t(A)=A_{\tf}$ and $M\oti_R K=A\oti_R K$.
\item $\mathfrak I+(t)=\mathfrak H+(t)$, so $M/tM=A/tA$.
\item $x_1-tx_2\in\mathfrak  I\smallsetminus (\mathfrak I^2+ {\mathfrak H})$, so 
\[\begin{split}
\Tor^A_1(A_{\tf}, M)&=\Tor^A_1(A/t(A), A/t(A))
\\&=t(A)/t^2(A)\\&=\mathfrak I/(\mathfrak I^2+\mathfrak H)\\&\neq 0.\end{split}\]
 \end{enumerate}
 
Equip $S$ with the grading such that elements of $R$ have degree zero and  $\deg x_i=1$ for all $i$. Then $\mathfrak I$ and $\mathfrak H$ are homogeneous ideals generated by elements of degree 1.
 
(a) Assume that $f\in S$ is a homogeneous element in the above grading, and $tf\in \mathfrak I$. Then $f\in B=R[x_1,\ldots,x_d]$ for some finite $d$. Now $tf\in\mathfrak  I$, so increasing $d$ if necessary, we may assume $tf \in (x_1-tx_2,\ldots,x_{d-1}-tx_d)\subset B$. To show that $f\in\mathfrak  I$, as $x_i-tx_{i+1}\in\mathfrak  I$, we can assume that $f=ax_d^n$ for some $a\in R$ and some $n\ge 0$. Now $tf=tax_d^n\in (x_1-tx_2,\ldots,x_{d-1}-tx_d)$. Setting $x_i=t^{d-i}$ for $i=1,\ldots,d$, we get $ta=0$, hence $a=0$. Thus $f\in \mathfrak I$.

(b) First $\mathfrak I/\mathfrak H\subset t(A)$, since $t^i(x_i-tx_{i+1})\in \mathfrak H$ for all $i\ge 1$.

Conversely, take $f\in S$ homogeneous such that $bf\in\mathfrak  H$ for some $b\in R\smallsetminus \{0\}$. We have to show that $f\in \mathfrak I$. Again $f\in B=R[x_1,\ldots,x_d]$ for some finite $d$. Now $bf\in\mathfrak  H$, so increasing $d$ if necessary, we may assume 
$$
bf \in (t^ix_i-t^{i+1}x_{i+1},x_j-t^2x_{j+2}:1\le i\le d-1, 1\le j\le d-2)=\mathfrak H'\subset B.
$$
We can write $f=f_1+f_2$, where $f_1\in (x_i-tx_{i+1}: 1\le i\le d-1) \subset \mathfrak I$, and $f_2=ax_d^n$ for some $a\in R, n\ge 0$. Clearly $t^{d-1}f_1\in\mathfrak  H'$, so $bt^{d-1}f_2\in\mathfrak  H'$. Now $bt^{d-1}f_2=abt^{d-1}x_d^n\in \mathfrak H'\subset B$. Setting $x_i=t^{d-i}$ for $i=1,\ldots,d$, we get $t^{d-1}ba=0$, hence $a=0$. Thus $f=f_1\in\mathfrak  I$.

Since $M=A/t(A)$, clearly $M\oti_R K=A\oti_R K$.

(c) We have $\mathfrak I+(t)=\mathfrak H+(t)=(t,x_i: i\ge 1)$, so $M/tM=A/tA\simeq k$. 

(d) Assume that $x_1-tx_2\in {\mathfrak I}^2+{\mathfrak H}$. Since $x_1-tx_2$ is homogeneous of degree 1, we get $x_1-tx_2\in \mathfrak H$. Thus for some large $d$,
\[
x_1-tx_2=\sum_{i=1}^{d-1} a_i(t^ix_i-t^{i+1}x_{i+1})+\sum_{j=1}^{d-2}b_j(x_j-t^2x_{j+2}), \quad a_i,b_j\in R.
\]
Denote $y_i=x_i-tx_{i+1}$, then $x_j-t^2x_{j+2}=y_j+ty_{j+1}$. Thus
\begin{align*}
y_1 &=\sum_{i=1}^{d-1} a_it^iy_i+\sum_{j=1}^{d-2}b_j(x_j-t^2x_{j+2})=
\sum_{i=1}^{d-1} a_it^iy_i+\sum_{j=1}^{d-2}b_j(y_j+ty_{j+1})\\
    &=\sum_{i=1}^{d-1}(a_it^i+b_{i-1}t+b_i)y_i \quad \textnormal{(where $b_i=0$ for $i\in \{0,d-1\}$)}.
\end{align*}
Since $y_1,\ldots,y_{d-1}$ are algebraically independent over $R$, this implies
\[
\begin{cases}
 a_1t+b_1 &=1,\\
 a_it^i+b_{i-1}t+b_i&=0, \quad \text{for $2\le i\le d-1$}.
\end{cases}
\]
Since $b_{d-1}=0$, by reverse induction $b_i\in (t^i)$ for each $1\le i\le d-1$. But then $1=a_1t+b_1\in (t)$, a contradiction. Hence $x_1-tx_2\notin (\mathfrak I^2+\mathfrak H)$.
\end{ex}

Similarly, we may ask if the torsionfree condition in hypothesis (2) of \ref{cor_fiberpurecriterion_onebase} banana can be dropped. 
Using an example due to Christensen--Iyengar--Marley \cite{CIM}, we show that the answer is again negative.
\begin{lem}
\label{lem_CIM}
For any $d\ge 2$, there exist a complete noetherian local ring $(R,\mm,k)$ and a (non-finite) $R$-module $P$ satisfying simultaneously the following conditions:
\[
\begin{cases}
\Tor^R_i(k,P)= 0, \quad \text{for $i\le d-1$},\\
\Tor^R_i(k,P)\neq 0, \quad \text{for $i\ge d$},\\
\Tor^R_i({\boldsymbol k}(\pp),P)=0, \quad  \text{for every $i$ and every $\pp \in \Spec\,R\smallsetminus \mm$}.
\end{cases}
\]
Moreover, $P$ is not  a flat $R$-module (and moreover, does not have a finite flat dimension). 
\end{lem}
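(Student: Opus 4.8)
The plan is to realize the module $P$ explicitly as the injective hull $E=E_R(k)$ of the residue field over a well-chosen ring $R$, and to read off the three Tor conditions from Matlis duality together with the theory of Bass numbers; this is one concrete way to produce the Christensen--Iyengar--Marley example. First I would choose $R$ to be a \emph{complete Cohen--Macaulay local ring of dimension $d$ that is not Gorenstein}. Such rings exist for every $d\ge 1$: one may start from a one-dimensional non-symmetric numerical-semigroup ring such as $k[[t^3,t^4,t^5]]$ (a complete $1$-dimensional Cohen--Macaulay domain whose semigroup $\langle 3,4,5\rangle$ is not symmetric, hence non-Gorenstein) and adjoin $d-1$ power-series variables, which preserves completeness, Cohen--Macaulayness and non-Gorensteinness. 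Then set $P=E_R(k)$.

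For the Tor computation I would invoke Matlis duality. Writing $(-)^\vee=\operatorname{Hom}_R(-,E)$ and using that $E$ is injective together with the fact that $k$ admits a resolution by finite free $R$-modules, one obtains the natural isomorphism $\Tor^R_i(k,E)\cong\operatorname{Ext}^i_R(k,R)^\vee$ for all $i$; hence $\dim_k\Tor^R_i(k,P)$ equals the $i$-th Bass number $\mu^i(R)=\dim_k\operatorname{Ext}^i_R(k,R)$. Since $R$ is Cohen--Macaulay of depth $d$, the first nonvanishing Bass number occurs in degree $d$, so $\Tor^R_i(k,P)=0$ for $i\le d-1$. For the nonvanishing in \emph{all} degrees $i\ge d$ I would use the rigidity of Bass numbers: over a Cohen--Macaulay local ring the nonzero $\mu^i$ occupy the whole interval $[\operatorname{depth}R,\operatorname{idim}R]$, and $\operatorname{idim}R<\infty$ forces $R$ to be Gorenstein (Bass); as $R$ is not Gorenstein, $\mu^i\ne 0$ for every $i\ge d$, i.e.\ $\Tor^R_i(k,P)\ne 0$ for all $i\ge d$. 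In particular $\Tor^R_i(k,P)\ne 0$ for arbitrarily large $i$; since a module of finite flat dimension $n$ would satisfy $\Tor^R_i(-,P)=0$ for all $i>n$, the module $P$ has infinite flat dimension, and in particular is not flat.

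The remaining two properties are immediate from the nature of $E$. Because $E_R(k)$ is an Artinian module it is supported only at $\mathfrak m$, so $P_{\mathfrak p}=0$ for every $\mathfrak p\in\Spec R\smallsetminus\{\mathfrak m\}$; consequently $\Tor^R_i(\boldsymbol k(\mathfrak p),P)\cong\Tor^{R_{\mathfrak p}}_i(\boldsymbol k(\mathfrak p),P_{\mathfrak p})=0$ for all $i$ and all non-maximal $\mathfrak p$. Finally, $E_R(k)$ is finitely generated precisely when $R$ is Artinian; since $\dim R=d\ge 2$ it is not, so $P$ is non-finite as required.

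The main obstacle is the assertion that $\Tor^R_i(k,P)\ne 0$ for every $i\ge d$, i.e.\ that \emph{all} higher Bass numbers of a non-Gorenstein Cohen--Macaulay ring are nonzero; this is the one point that is not a formal manipulation, resting on the rigidity (``no gaps'') of Bass numbers and Bass's characterisation of Gorenstein rings via finite injective dimension. Everything else --- the Matlis-duality identification, the computation of depth, the support of $E$, and the non-finiteness --- is routine. If one prefers a black-box route, one simply quotes the Christensen--Iyengar--Marley example directly for the residue-field Tor pattern and then adds only the two routine observations about the support and the size of $P$.
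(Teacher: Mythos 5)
Your proof is correct, but it takes a genuinely different route from the paper's. The paper's entire proof is a citation: it takes the ring $R=k\llbracket x_1,\ldots,x_{d+1}\rrbracket/(x_1^2,x_1x_2,\ldots,x_1x_{d+1})$ and the module $P=H^d_{\mm}(R/(x_1))$ of Example 4.2 in Christensen--Iyengar--Marley and asserts that all the listed properties are proved there --- precisely the ``black-box route'' you acknowledge in your final paragraph. Your construction instead takes a complete Cohen--Macaulay non-Gorenstein ring of dimension $d$ and $P=E_R(k)$, and verifies the Tor pattern by hand: Matlis duality converts $\Tor^R_i(k,E_R(k))$ into the Bass numbers $\mu^i(R)$; the vanishing for $i\le d-1$ is the standard fact that the first nonvanishing Bass number sits in degree $\operatorname{depth}R=d$; and the nonvanishing for all $i\ge d$ follows from the no-gaps theorem for Bass numbers (Fossum--Foxby--Griffith--Reiten, Roberts) combined with Bass's theorem that finite injective dimension forces Gorensteinness --- and since your $R$ is complete it has a dualizing complex, so even the FFGR version of the no-gaps theorem suffices. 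The two constructions are in fact cousins, since the CIM module is $E_{R/(x_1)}(k)$ viewed over the ambient depth-zero (hence non-Cohen--Macaulay) ring, but the verification routes differ: the paper buys brevity at the cost of an external reference, while your argument is self-contained modulo two classical theorems and has the added feature of producing a Cohen--Macaulay example. Your remaining steps (flat base change of Tor to kill the non-maximal fibers via $P_\pp=0$, the Artinian-ness and support of $E_R(k)$, non-finiteness since $R$ is not Artinian, and infinite flat dimension from the everywhere-nonvanishing higher Tor) are all correct and routine, as you say.
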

\begin{proof}
We use \cite[Example 4.2]{CIM}. Take $R=k\llbracket x_1,\ldots,x_{d+1}\rrbracket/(x_1^2,x_1x_2,\ldots,x_1x_{d+1})$, $\mm=(x_1,\ldots,x_{d+1})$. Let $P=H^d_\mm(R/(x_1))$, the $d$-th local cohomology of $R/(x_1)$ with support at the maximal ideal $\mm$.  It is proved in \emph{ibid.} that $R$ and $P$ have the required properties.
\end{proof}

\begin{cor}
There exist a complete noetherian local ring $(R,\mm)$ and an injective map $g: M\to N$ of $R$-modules which is \textbf{not} pure and  such that:
\begin{enumerate}[\quad \rm (1)]
 \item The base change $M\oti_R {\boldsymbol k}(\pp) \to N\oti_R {\boldsymbol k}(\pp)$ is injective for every $\pp \in \Spec\,R$;
\item the module $N$ is   $R$-flat.
\end{enumerate}
\end{cor}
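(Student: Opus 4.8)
The plan is to realize the module $P$ furnished by Lemma \ref{lem_CIM} as the cokernel of the sought-after map. Fix any $d\ge 2$ and let $(R,\mm,k)$ and $P$ be as in that lemma. Choose a surjection $\pi:N\to P$ with $N$ a free (hence flat) $R$-module, set $M=\Ker\pi$, and let $g:M\to N$ be the inclusion. Then $g$ is injective, $N$ is $R$-flat (so requirement (2) holds), and $\Coker g\simeq P$.

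Since $g$ is injective and $N$ is flat, Lemma \ref{lem_pure_equals_flatcoker} tells us that $g$ is pure precisely when $\Coker g\simeq P$ is flat. But $P$ is not flat by the last assertion of Lemma \ref{lem_CIM}; hence $g$ fails to be pure, as required.

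It remains to verify condition (1). For each $\pp\in\Spec\,R$ the short exact sequence $0\to M\to N\to P\to0$ yields, upon applying $-\oti_R\bm k(\pp)$, the exact sequence
\[
\Tor_1^R(P,\bm k(\pp))\to M\oti_R\bm k(\pp)\to N\oti_R\bm k(\pp),
\]
so the map $M\oti_R\bm k(\pp)\to N\oti_R\bm k(\pp)$ is injective as soon as $\Tor_1^R(P,\bm k(\pp))=0$. The crux of the argument — and the reason for imposing $d\ge 2$ — lies at the closed point: for $\pp=\mm$ we have $\bm k(\mm)=k$, and the vanishing $\Tor_i^R(k,P)=0$ for $i\le d-1$ gives $\Tor_1^R(P,k)=0$ because $1\le d-1$; for $\pp\neq\mm$ the vanishing $\Tor_i^R(\bm k(\pp),P)=0$ for all $i$ is exactly the third displayed property in Lemma \ref{lem_CIM}. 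Thus $\Tor_1^R(P,\bm k(\pp))=0$ for every $\pp$, and condition (1) follows. The only genuine obstacle is this behaviour at the closed point $\mm$: had we taken $d=1$, the term $\Tor_1^R(P,k)$ could be nonzero and the fiber over $\mm$ would fail to remain injective, which is precisely what forces the choice $d\ge 2$.
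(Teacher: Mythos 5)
Your proof is correct and takes essentially the same approach as the paper: realize $P$ from Lemma \ref{lem_CIM} as the cokernel of the inclusion $M=\Ker(N\to P)\hookrightarrow N$ with $N$ free, deduce non-purity from Lemma \ref{lem_pure_equals_flatcoker} since $P$ is not flat, and obtain fiber injectivity from the vanishing of $\Tor_1^R({\boldsymbol k}(\pp),P)$. Your explicit verification that $d\ge 2$ guarantees $\Tor_1^R(k,P)=0$ at the closed point is a detail the paper leaves implicit, but it is not a different argument.
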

\begin{proof}
Let $(R,\mm)$ and $P$ be as in Lemma \ref{lem_CIM}. Choose a free $R$-module $N$ which surjects onto $P$, and let $M$ be the kernel of this surjection.
Then $\Tor^R_1({\boldsymbol k}(\pp),P)=0$ implies that $M\oti_R {\boldsymbol k}(\pp) \to N\oti_R {\boldsymbol k}(\pp)$ is injective for every $\pp \in \Spec\,R$. But by our choice, $P$ is not flat, hence Lemma \ref{lem_pure_equals_flatcoker} implies that  $g$ fails to be pure.  
\end{proof}

\section{Applications to group schemes}
\label{sect_applications}

Using the material developed in the previous sections, we establish     criteria for morphisms of affine group schemes   to be faithfully flat. This is in order not only because affine group schemes which fail to be of finite type are important objects in the Tannakian theory, but also because, for group schemes, there are rather strong results such as the ``faithful flatness of subalgebras'' \cite[Ch. 14]{Wat}. 

In this section,    $R$ stands for  a noetherian ring.

\subsection{Fiber criteria for group scheme homomorphisms}\label{04.12.2023--1}
 
The following result is a consequence of the fundamental theorem about faithful flatness of Hopf subalgebras \cite[14.1, Theorem]{Wat} and our previous findings. (Recall that a morphism of group schemes is faithfully flat if the underlying morphism of schemes is faithfully flat.)

\begin{thm}[Faithful flatness of pure subalgebras]\label{pure_flat} Let $f: G'\longrightarrow G$ be a homomorphism of flat affine group schemes over $R$. The following are equivalent:
\begin{enumerate}
[\quad\rm(1)]
\item The morphism $f$ is faithfully flat;
\item For each $\mathfrak p\in \text{\rm Spec}\, R$ the morphism 
$f\otimes_R\boldsymbol k(\g p)$ is faithfully flat;
\item The morphism induced on $R$-algebras $f^\sharp:R[G]\to R[G']$ is  pure. 
\end{enumerate} \end{thm}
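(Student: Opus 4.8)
The plan is to prove the cyclic implications $(1)\Rightarrow(2)\Rightarrow(3)\Rightarrow(1)$, using the Hopf-algebraic input and our fiber criteria. The algebraic translation is that $f$ faithfully flat means $f^\sharp:R[G]\to R[G']$ is faithfully flat, $f\otimes_R\boldsymbol k(\mathfrak p)$ faithfully flat means $R[G]\otimes_R\boldsymbol k(\mathfrak p)\to R[G']\otimes_R\boldsymbol k(\mathfrak p)$ is faithfully flat, and purity of $f^\sharp$ is purity as a map of $R[G]$-modules (where $R[G']$ is an $R[G]$-algebra via $f^\sharp$).

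\begin{proof}
Write $B=R[G]$ and $C=R[G']$, so that $f^\sharp:B\to C$ makes $C$ a $B$-algebra; both $B$ and $C$ are flat $R$-algebras by hypothesis. We prove $(1)\Rightarrow(2)\Rightarrow(3)\Rightarrow(1)$.

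\medskip\noindent\emph{$(1)\Rightarrow(2)$.} Faithful flatness is preserved under base change: if $C$ is faithfully flat over $B$, then $C\otimes_R\boldsymbol k(\mathfrak p)$ is faithfully flat over $B\otimes_R\boldsymbol k(\mathfrak p)$ for every $\mathfrak p\in\Spec\,R$.

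\medskip\noindent\emph{$(2)\Rightarrow(3)$.} We must show $f^\sharp:B\to C$ is pure as a map of $B$-modules. The crucial observation is that over a field, the map $f\otimes_R\boldsymbol k(\mathfrak p)$ is a homomorphism of flat (hence, over a field, free) affine group schemes, and the fundamental theorem on faithful flatness of Hopf subalgebras \cite[14.1, Theorem]{Wat} shows that $B\otimes_R\boldsymbol k(\mathfrak p)\to C\otimes_R\boldsymbol k(\mathfrak p)$ is faithfully flat. A faithfully flat ring map is pure, so the fiber map $f^\sharp\otimes_R\boldsymbol k(\mathfrak p)$ is a pure map of $C\otimes_R\boldsymbol k(\mathfrak p)$-modules. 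Moreover $C$ is flat over $B$ fiberwise, and $f^\sharp\otimes_R(B/\mathfrak p B)$ is injective because $f^\sharp$ admits a retraction coming from the counit (the structure map $B\to R\to C$ splits $f^\sharp$ after restricting along the augmentation), so hypotheses (i) and (ii) of Proposition \ref{prop_fiber_puremap} hold. We are then in position to apply Proposition \ref{prop_fiber_puremap} with the base ring $R$, the $B$-algebra map $\varphi=f^\sharp:B\to C$ (taking $A=B$, $M=B$, $N=C$, noting $N=C$ is flat over $A=B$ fiberwise and globally by the preceding), concluding that $f^\sharp$ is pure.

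\medskip\noindent\emph{$(3)\Rightarrow(1)$.} Suppose $f^\sharp:B\to C$ is pure. The identity map $B\to B$ factors as a pure submodule of $C$, and purity gives that $C\otimes_R\boldsymbol k(\mathfrak p)$ is faithfully flat over $B\otimes_R\boldsymbol k(\mathfrak p)$ fiberwise by \cite[14.1, Theorem]{Wat} applied after reduction to fibers; then Corollary \ref{cor_fiber_faithfullyflat} upgrades fiberwise faithful flatness together with $R$-flatness to faithful flatness of $C$ over $B$. Concretely, purity of $f^\sharp$ forces the fiber maps to be pure maps of Hopf algebras over fields, whence faithfully flat by the Hopf-algebraic input, and then the fiber criterion Corollary \ref{cor_fiber_faithfullyflat} yields that $C$ is faithfully flat over $B$, i.e.\ $f$ is faithfully flat.
\end{proof}

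The main obstacle I anticipate is the correct deployment of \cite[14.1, Theorem]{Wat} at the fibers: that result guarantees faithful flatness of a Hopf algebra over a \emph{Hopf subalgebra}, so one must check that purity (or the hypotheses fed into Proposition \ref{prop_fiber_puremap}) indeed identifies $B\otimes_R\boldsymbol k(\mathfrak p)$ with a Hopf subalgebra of $C\otimes_R\boldsymbol k(\mathfrak p)$, rather than merely a subalgebra. A second delicate point is verifying hypothesis (ii) of Proposition \ref{prop_fiber_puremap}, i.e.\ that the torsionfree-fiber map $\varphi\otimes_A(A/\mathfrak p A)_{\tf}$ is injective; this is where flatness of $G$ over $R$ and the existence of the counit-induced splitting must be used carefully to control torsion in $C/\mathfrak p C$ as an $R/\mathfrak p$-module.
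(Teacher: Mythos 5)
Your cycle $(1)\Rightarrow(2)\Rightarrow(3)\Rightarrow(1)$ matches the paper's, and your treatment of $(3)\Rightarrow(1)$ --- injectivity of the fiber maps from purity, then \cite[14.1, Theorem]{Wat} at each fiber, then Corollary \ref{cor_fiber_faithfullyflat} --- is exactly the paper's argument. The problems are concentrated in $(2)\Rightarrow(3)$. You apply Proposition \ref{prop_fiber_puremap} with $A=B=R[G]$, $M=B$, $N=C=R[G']$, but that proposition \emph{requires} $N$ to be flat over $A$, i.e.\ $R[G']$ flat over $R[G]$ --- which is, up to faithfulness, the very conclusion of the theorem. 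Your phrase ``flat over $A=B$ fiberwise and globally by the preceding'' is unjustified: nothing preceding establishes global flatness. The gap is fillable (hypothesis (2), $R$-flatness of $B$ and $C$, and Theorem \ref{prop_fiber} do give $C$ flat over $B$), but once you invoke that, Corollary \ref{cor_fiber_faithfullyflat} already yields $(2)\Rightarrow(1)$ outright, and then $(3)$ follows simply because a faithfully flat ring map is pure; the detour through Proposition \ref{prop_fiber_puremap} becomes superfluous. The paper sidesteps all of this by reading purity in (3) as purity of \emph{$R$-modules} and applying Corollary \ref{cor_fiberpurecriterion_onebase} with base ring $R$, where the flatness hypothesis needed is $R$-flatness of $R[G']$ --- a standing assumption --- rather than the unknown $R[G]$-flatness.

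Second, your justification of hypothesis (i) of Proposition \ref{prop_fiber_puremap} is wrong: $f^\sharp$ admits no ``retraction coming from the counit''. The composite $R[G]\to R\to R[G']$ you describe goes in the wrong direction to split $f^\sharp$, and the genuine candidate $R[G']\to R\to R[G]$ (counit of $R[G']$ followed by the unit) composed with $f^\sharp$ sends $b\mapsto \varepsilon(b)\cdot 1$, which is not the identity; if Hopf algebra maps were split in this way, purity would be automatic and the theorem vacuous. The correct argument for injectivity of $B/\pp B\to C/\pp C$ is: since $B$ is $R$-flat, $B/\pp B$ is a torsionfree $(R/\pp)$-module, hence injects into $B\otimes_R\boldsymbol k(\pp)$; by (2) the fiber map $B\otimes_R\boldsymbol k(\pp)\to C\otimes_R\boldsymbol k(\pp)$ is injective, and this composite factors through $B/\pp B\to C/\pp C$, so the latter is injective. (Two further slips: the purity in hypothesis (iii) should be as $(B\otimes_R\boldsymbol k(\pp))$-modules, not $(C\otimes_R\boldsymbol k(\pp))$-modules, and your invocation of \cite[14.1, Theorem]{Wat} inside $(2)\Rightarrow(3)$ is redundant, since (2) already asserts fiberwise faithful flatness; Waterhouse's theorem is only genuinely needed in $(3)\Rightarrow(1)$.)
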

 \begin{proof} 
 The implication (1) $\Longrightarrow$ (2) is well-known. 
That  (2) $\Longrightarrow$ (3) follows from Corollary \ref{cor_fiberpurecriterion_onebase} since  $R[G]$ and $R[G']$ are $R$-flat. 
Finally, let us assume that (3) holds. It then follows that, for any $\g p\in\mathrm{\rm Spec}\,R$, the morphism of $\boldsymbol k(\g p)$-modules   $f^\sharp\otimes_R\boldsymbol k(\g p)$ is injective; this, when allied with \cite[14.1, Theorem]{Wat}, shows that $f^\sharp\otimes_R\boldsymbol k(\g p)$ 
is faithfully flat and we conclude that (1) is true by Corollary \ref{cor_fiber_faithfullyflat}. 
\end{proof}

\subsection{Flatness criteria involving categories of representations}\label{12.12.2023--1}

Let $\mathbf{Rep}_{\mathbf f}\,G$ stand for the category of representations of 
$G$ whose underlying $R$-module is in addition finitely generated, see Section \ref{prelimina_group_schemes}. Given another flat affine group scheme $G'$ and a morphism   $f:G'\to G$, we shall apply our flatness criteria to relate   properties of  the  functor \[f^*:\mathbf{Rep}_{\mathbf f}\,G\to\mathbf{Rep}_{\mathbf f}\,G' 
\] 
to properties of  $f$. This shall generalize    
\cite[Prop.~2.21]{DM82}.

Before proceeding, let us explain some categorical terminology. Recall that a functor   $F:\mathcal C\to \mathcal C'$ is fully faithful if the map $\mathrm{Hom}_{\mathcal C}(V,W)\to\mathrm{Hom}_{\mathcal C'}(FV,FW)$ is bijective for all objects $V,W$ of $\mathcal C$. Now, if $F$ is fully faithful,  $\mathcal C$ and $\mathcal C'$ are abelian, and $F$ is exact \cite[VIII.3, p. 197]{maclane98}, we say that its (essential) image is {\it stable under subobjects} if  for  every $V\in\mathcal C$ and every subobject $W'\to  F(V)$, there exists a subobject $W\to  V$ and an isomorphism $F(W)\simeq W'$. (This terminology is not widespread.)

\begin{thm}\label{surjective_criterion_group_hom}
  The morphism  $f$ is  faithfully flat if and only if the functor $f^*$ is fully faithful  and its essential  image  is stable under subobjects.   
\end{thm}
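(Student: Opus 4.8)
The plan is to transport the whole problem to the level of the Hopf algebra map $\phi=f^\sharp\colon R[G]\to R[G']$ and to read the two categorical hypotheses as statements about comodules. Writing $H=R[G]$ and $H'=R[G']$, recall that $f^*$ is the corestriction functor $\Comod{H}\to\Comod{H'}$ sending a coaction $\rho\colon V\to V\oti_R H$ to $(\mathrm{id}\oti\phi)\rho\colon V\to V\oti_R H'$, that $\Hom$ in these categories is computed as an equalizer (``invariants''), and that, since $H,H'$ are $R$-flat, kernels and cokernels of comodule maps exist. The decisive external input is Theorem~\ref{pure_flat}, which tells us that $f$ is faithfully flat if and only if $\phi$ is a pure map of $R$-modules. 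Thus the whole statement amounts to proving that $\phi$ is $R$-pure if and only if $f^*$ is fully faithful with essential image stable under subobjects.

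For the direction ``$f$ faithfully flat $\Rightarrow$ categorical conditions'' I would argue entirely from the $R$-purity of $\phi$ granted by Theorem~\ref{pure_flat}. Purity makes $W\oti_R\phi\colon W\oti_R H\to W\oti_R H'$ injective for every $R$-module $W$, and hence, applying the left-exact functor $\Hom_R(V,-)$, makes $\Hom_R(V,W\oti_RH)\to\Hom_R(V,W\oti_RH')$ injective for finite $V$. A short diagram chase identifying $\Hom_G(V,W)$ and $\Hom_{G'}(f^*V,f^*W)$ as the equalizers of the two compatible parallel pairs valued in these $\Hom$-groups then yields full faithfulness. For stability under subobjects, let $W'\subseteq f^*V$ be a $G'$-subcomodule; the coaction $\rho_V(W')\subseteq V\oti_RH$ maps into $W'\oti_RH'$, hence dies in $(V/W')\oti_RH'$, and since $(V/W')\oti_R\phi$ is injective it already dies in $(V/W')\oti_RH$. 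Therefore $\rho_V(W')\subseteq W'\oti_RH$, so $W'$ is a $G$-subcomodule of $V$ carried to itself by $f^*$.

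For the converse I would reduce, via Theorem~\ref{pure_flat}, to proving that $\phi$ is $R$-pure, and I would first establish the weaker fact that $\phi$ is injective. Here I would use that, $R$ being noetherian and $H'$ being $R$-flat, every $H'$-comodule is the filtered union of its subcomodules that are finite over $R$; applying this to the regular comodule lets one write $H=\bigcup_j V_j$ with each $V_j$ a finite $G$-subrep. The bialgebra identity $\Delta'\phi=(\phi\oti\phi)\Delta$ shows that $\phi$ is itself a morphism of $G'$-comodules from $f^*H$ to the regular comodule $H'$, and the adjunction for the cofree comodule identifies $\Hom_{G'}(f^*V,H')$ with $V^\vee$. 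Using $\varepsilon'\phi=\varepsilon$ together with full faithfulness, the map induced by $\phi$ on $\Hom_{G'}(f^*V,-)$ is an isomorphism for every finite $G$-rep $V$; in particular $\Hom_{G'}(f^*V,\Ker\phi)=0$. If $\Ker\phi\neq0$ it contains a nonzero $R$-finite subcomodule, which sits inside some $f^*V_j$ and is therefore, by stability under subobjects, of the form $f^*W$; the inclusion $f^*W\hookrightarrow\Ker\phi$ is then a nonzero element of $\Hom_{G'}(f^*W,\Ker\phi)$, a contradiction. Hence $\phi$ is injective.

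The main obstacle is the passage from injectivity to purity of $\phi$. By Corollary~\ref{cor_fiberpurecriterion_onebase} (with $A=R$), and since $H$ is $R$-flat so that $H\oti_R(R/\pp)$ is automatically torsionfree, purity is equivalent to the injectivity of $\phi\oti_R\boldsymbol k(\pp)$ for every $\pp\in\Spec\,R$; equivalently, to faithful flatness of each fiber $f\oti_R\boldsymbol k(\pp)$. I would obtain this by base-changing the comodule argument above to the field $\boldsymbol k(\pp)$ and invoking the classical field-theoretic statements (faithful flatness of injective Hopf algebra maps over a field, \cite[14.1]{Wat}, together with the field case of \cite[Prop.~2.21]{DM82}). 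The delicate point---and the step that most deserves care---is that the categorical hypotheses are given over $R$, whereas the field arguments require them over $\boldsymbol k(\pp)$; one must check that full faithfulness and stability under subobjects survive base change to the residue fields, using the compatibility of $f^*$ with $\oti_R\boldsymbol k(\pp)$ and the finiteness theorem for comodules to reduce to representations defined over $R$. Once fiberwise injectivity is in hand, Theorem~\ref{pure_flat} closes the argument.
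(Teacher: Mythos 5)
Your forward direction is correct and is essentially the paper's own argument: the paper runs the diagram chase through the flat cokernel of $f^\sharp$, you run it through injectivity of $(V/W')\oti_R f^\sharp$, but both rest on Theorem \ref{pure_flat} and flatness of $R[G]$, $R[G']$. The scaffolding of your converse also matches the paper: reduce via Theorem \ref{pure_flat} to purity of $\phi=f^\sharp$, then via Corollary \ref{cor_fiberpurecriterion_onebase} to injectivity of $\phi\oti_R\boldsymbol k(\pp)$ for every $\pp\in\Spec R$, and settle each fiber by the known field case. The problem is that the single step you defer --- checking that full faithfulness and stability under subobjects ``survive base change to the residue fields'' --- is not a routine compatibility; it is the mathematical core of the paper's proof of Theorem \ref{flat_criterion_coalgebra_hom}. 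A comodule over $R[G]\oti_R\boldsymbol k(\pp)$ need not arise by base change from an object of $\mathbf{Rep}_{\mathbf f}\,G$, so hypotheses about the $R$-linear categories say nothing directly about the fiber categories. The paper handles this in two stages: passage to $R/\pp$ is harmless because $\Comodf{L\oti R/\pp}$ is the full subcategory of $\Comodf{L}$ of objects killed by $\pp$; but the passage from the domain $R/\pp$ to its fraction field requires Serre's model lemma (Lemma \ref{lem_serre}), an intersection argument $\mathcal W=\mathcal V\cap W$ (using flatness and \cite[Theorem 7.4(i)]{Mat89}) to descend stability under subobjects, and a clearing-of-denominators argument to descend fullness. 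Nothing playing this role appears in your proposal, so as written the converse is incomplete at exactly its hardest point.

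Ironically, your cofree-comodule argument for injectivity of $\phi$ --- which, as deployed, is a detour, since injectivity of $\phi$ over $R$ is neither what Corollary \ref{cor_fiberpurecriterion_onebase} asks for nor sufficient for purity --- contains the means to close the gap, provided you aim it at the quotients rather than at the fibers. Both categorical hypotheses descend trivially to $R/\pp$ (full subcategory of objects killed by $\pp$, as above), and your argument uses only that the base is noetherian and that the two Hopf algebras are flat; running it over $R/\pp$ gives injectivity of $\phi\oti_R R/\pp$ for every $\pp$. Since $\phi\oti_R\boldsymbol k(\pp)$ is the localization of $\phi\oti_R R/\pp$ at the generic point and localization is exact, injectivity on all fibers follows, and Corollary \ref{cor_fiberpurecriterion_onebase} together with Theorem \ref{pure_flat} finishes the proof. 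That completed version would be genuinely different from the paper's: it replaces the model-theoretic descent and the appeal to the field case of \cite[Prop.~2.21]{DM82} by the cofree-comodule/local-finiteness argument. But this redirection has to be made explicitly; the step you actually wrote down --- descending the hypotheses to $\boldsymbol k(\pp)$ itself --- is precisely the nontrivial one, and it is missing.
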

\begin{proof}
According to Theorem \ref{pure_flat}, $f$ is   faithfully flat if and only if   $f^\sharp:R[G]\to R[G']$ is pure. Therefore this theorem is a consequence of the next proposition for coalgebra  morphisms.
\end{proof}

\begin{thm}\label{flat_criterion_coalgebra_hom}
Let $\varphi: L\to L'$ be a homomorphism of flat $R$-coalgebras. Then $\varphi$ is pure if and only if  the functor $\varphi_*:\Comodf L \to\Comodf {L'}$  is fully faithful  and its essential image is stable under subobjects.  
\end{thm}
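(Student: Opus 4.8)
The goal is to characterize purity of a coalgebra map $\varphi:L\to L'$ in terms of the induced functor $\varphi_*$ on categories of comodules over finite $R$-modules. The natural strategy is to translate each of the two categorical conditions---full faithfulness, and stability of the essential image under subobjects---into concrete algebraic statements about $\varphi$, and then to connect these to purity by invoking the fiber criterion for pure maps (Proposition \ref{prop_fiber_puremap}) together with Corollary \ref{cor_fiberpurecriterion_onebase}. Since $L$ and $L'$ are flat $R$-coalgebras, the categories $\Comodf L$ and $\Comodf {L'}$ are abelian and $\varphi_*$ is exact (it is the identity on underlying modules, merely changing the comodule structure via $\varphi$), so the categorical framework from the discussion preceding the statement applies.

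\textbf{First, the forward direction.} Assuming $\varphi$ is pure, I would first verify full faithfulness. A morphism of $L$-comodules is an $R$-linear map commuting with the coactions; the content is that an $R$-linear map $V\to W$ which is an $L'$-comodule morphism after pushing forward is automatically an $L$-comodule morphism. The comodule compatibility is encoded by equality of two maps into $V\otimes_R L$ (resp.\ $W\otimes_R L$), and pushing to $L'$ applies $\mathrm{id}\otimes\varphi$. Purity of $\varphi$ guarantees $\mathrm{id}_V\otimes\varphi$ and $\mathrm{id}_W\otimes\varphi$ are injective (purity tensored with any module, here $V$ and $W$, stays injective), so equality after composing with $\mathrm{id}\otimes\varphi$ forces equality already over $L$. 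This gives full faithfulness. For stability under subobjects, given $V\in\Comodf L$ and an $L'$-subcomodule $W'\subseteq \varphi_*(V)$, I would show that $W'$ is already an $L$-subcomodule of $V$: the coaction of $V$ lands $W'$ into $W'\otimes_R L'$, and I must descend this to $W'\otimes_R L$. Here the key is that $W'\otimes_R L\to W'\otimes_R L'$ is injective (again purity), so the diagram chase identifies the correct submodule; one takes $W=W'$ with its induced $L$-coaction.

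\textbf{The converse is where the real work lies.} Assuming $\varphi_*$ is fully faithful with essential image stable under subobjects, I must deduce purity of $\varphi$. The plan is to reduce purity to the fiber-by-fiber criterion: by Corollary \ref{cor_fiberpurecriterion_onebase} (applied with $A=R$, noting $L'$ is $R$-flat hence $\varphi$ is a map into a flat module after suitable reinterpretation), it suffices to check that $L/\mathfrak p L$ is torsionfree over $R/\mathfrak p$ and that $\varphi\otimes_R\boldsymbol k(\mathfrak p)$ is injective for every $\mathfrak p\in\Spec R$. The categorical hypotheses must be converted into exactly these statements. The idea is to produce, for each relevant element of $L$, a finite subcomodule containing it (every element of a comodule lies in a finite subcomodule---a standard fact I would invoke), thereby replacing the possibly-infinite coalgebras by finite comodule data accessible to the functor $\varphi_*$. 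Fiberwise injectivity of $\varphi$ should follow by testing the categorical conditions on the regular comodule $L$ itself (or its finite subcomodules) and its image, using full faithfulness to detect when the comodule structure map factors through $L$ versus $L'$.

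\textbf{The main obstacle.} I expect the hardest part to be the converse, specifically extracting the pointwise injectivity of $\varphi$ from the abstract stability-under-subobjects condition. The difficulty is that purity is a statement about $\varphi\otimes_R E$ for \emph{all} modules $E$, whereas the functor $\varphi_*$ only sees comodules on \emph{finite} modules; bridging this gap requires that every element of $\ker\varphi$ (or of the relevant obstruction) be witnessed inside a finite subcomodule whose image under $\varphi_*$ detects a failure of one of the categorical conditions. The right mechanism is presumably to view $L$ as the filtered union of its finite subcomodules, realize that a failure of purity would manifest as a finite subcomodule $W\subseteq L$ whose pushforward $\varphi_*(W)$ admits an $L'$-subobject not arising from an $L$-subobject---contradicting stability under subobjects---or as a non-injectivity detectable by full faithfulness. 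Carefully setting up this correspondence between the lattice of subcomodules and the purity condition, and ensuring the fiber criterion's torsionfreeness hypothesis is also met, is the technical heart of the argument.
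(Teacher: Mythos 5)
The forward direction of your proposal is essentially the paper's argument: purity makes $\mathrm{id}\otimes\varphi$ injective, and diagram chases give full faithfulness and stability under subobjects. (One small imprecision: for subobject stability the injectivity you actually need is of $(V/W')\oti_R L\to (V/W')\oti_R L'$ together with right-exactness of the tensor product — or, as the paper does it, flatness of $\Coker\varphi$ from Lemma \ref{lem_pure_equals_flatcoker} — not injectivity of $W'\oti_R L\to W'\oti_R L'$; but this is easily repaired.)

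The converse, however, contains a genuine gap, and you have in effect flagged it yourself: everything after the (correct) reduction via Corollary \ref{cor_fiberpurecriterion_onebase} is a description of what would need to be done, not an argument. Two concrete ideas are missing. First, the base case: over a \emph{field}, the implication ``categorical conditions $\Rightarrow$ injectivity of $\varphi$'' is itself a nontrivial theorem (\cite[Prop.~2.21]{DM82}, \cite{Hai16}); it cannot be obtained by ``testing the categorical conditions on the regular comodule,'' and your proposal neither proves it nor isolates it as an input. Second, and more seriously, the transfer mechanism: the hypotheses concern $\varphi_*$ on $\Comodf L$, whose objects are finite over $R$, whereas the fiber criterion demands injectivity of $\varphi\oti\bm k(\pp)$, and the relevant comodules there live over $\bm k(\pp)$ — in particular over the fraction field $K$ at the generic point of a domain quotient. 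The paper bridges this by induction on $\dim R$ (after reducing to $R$ a domain, nonzero primes $\pp$ are handled by the inductive hypothesis applied to $R/\pp$, since $\Comodf{L\oti R/\pp}$ is the full subcategory of $\Comodf L$ of objects killed by $\pp$), and then handles the generic fiber by Serre's model lemma (Lemma \ref{lem_serre}): an $L\oti K$-comodule $V$ has a finite $R$-model $\mathcal V$, an $L'\oti K$-subcomodule $W\subset V$ is intersected to $\mathcal W=\mathcal V\cap W$, one checks (using \cite[Theorem 7.4(i)]{Mat89} and flatness of $L'$) that $\mathcal W$ is an $L'$-subcomodule of $\mathcal V$, and only then can the hypothesis on $\varphi_*$ be invoked. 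Your ``filtered union of finite subcomodules'' remark gestures at this, but the crucial difficulty — that the offending subobject lives over $K$, not over $R$, and must be matched with a finite $R$-model inside the same ambient comodule before the hypothesis on $\varphi_*$ says anything about it — is left unresolved; without it, the stability-under-subobjects assumption simply cannot be applied to the fibers, and the proof does not close. (Incidentally, the torsionfreeness requirement in Corollary \ref{cor_fiberpurecriterion_onebase}, which you list as remaining work, is immediate: $L$ is $R$-flat, so $L/\pp L$ is $(R/\pp)$-flat, hence torsionfree.)
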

\begin{proof}

$(\Rightarrow)$. The argument here is essentially the same as in   \cite[Prop.~3.2.1~(ii)]{DH18}; we reproduce it for the convenience of the reader.  
Assume that $\varphi$ is   pure  and let 
$\rho:M\to M\otimes L$ and $\sigma:N\to N\otimes L$ define $L$-comodules.
 Let $h:M\to N$ be an $R$-linear map which is also an arrow in   $\Comodf {L'}$.
This means that the diagram 
$$\xymatrix{
M\ar[r]^-{\rho}\ar[d]_h&M\otimes L\ar[rr]^{\mathrm{id}_M\otimes \varphi}
&& M\otimes L'\ar[d]^{h\otimes\mathrm{id}_{L'}}\\
N\ar[r]_-{\sigma}&N\otimes L\ar[rr]_{\mathrm{id}_N\otimes \varphi}&& N\oti L' 
}
$$
commutes, since the horizontal compositions define the coactions of $L'$. Now 
\[\xymatrix{
M\otimes L\ar[rr]^-{\mathrm{id}_M\otimes \varphi}
\ar[d]_{h\otimes\mathrm{id}_{L}}
&& M\otimes L'\ar[d]^{h\otimes\mathrm{id}_{L'}}
\\
N\oti L\ar[rr]_-{\mathrm{id}_N\otimes \varphi}&& N\oti L',
}\]
is tautologically commutative and the injectivity of $\mathrm{id}_N\otimes\varphi$ plus a diagram chase assure  that $(h\otimes\mathrm{id}_L)\circ\rho=\sigma\circ h$. Now, let us perform the following modifications on the above notations to deal with stability under subobjects. Assume that $h$ is injective, that $M$ only carries  a structure of $L'$-comodule ---  we denote by $\rho':M\to M\otimes L'$ --- and that $h$ is a map of $L'$-comodules. We want to show that $\rho'(M)\subset M\otimes L$.  Let $\pi:L'\to C$ stand for the cokernel of $\varphi$; we know that $C$ is $R$-flat (cf. Lemma \ref{lem_pure_equals_flatcoker}). A simple diagram chase in the commutative diagram 
\[
\xymatrix{
M\ar@{^{(}->}[d]_h\ar[rrr]^{\rho'}&&&M\otimes L'\ar@{^{(}->}[d]^{h\otimes\mathrm{id}_{L'}}\ar[rr]^{\mathrm{id}_M\otimes\pi} && M\otimes C\ar@{^{(}->}[d]^{h\otimes\mathrm{id}_{C}}
\\
N\ar[r]_-{\sigma}&N\otimes L\ar[rr]_-{\mathrm{id}_N\otimes \varphi}&&N\otimes L'\ar[rr]_{\mathrm{id}_N\otimes\pi} && N\otimes C
}
\]
shows that $(\mathrm{id}_M\otimes \pi)\circ\rho'=0$ and hence $\rho'$ factors as $M\stackrel{\rho}\to M\otimes L\stackrel{\mathrm{id}\otimes \varphi}\to M\otimes L'$. It is obvious that $\rho$ then defines a coaction of $L$. 


$(\Leftarrow)$. According to  Corollary \ref{cor_fiberpurecriterion_onebase}, it is enough to show that   for all  $\mathfrak p\in\Spec\,R$, the $R/\g p$-linear map  $\varphi\oti R/\mathfrak p$ is injective, as this guarantees that $\varphi\oti  \boldsymbol k( \mathfrak p)$ is likewise injective. 
On the other hand,   $\Comodf{ L\oti R/\pp}$  and    $\Comodf{L'\oti R/\pp}$ are  the full subcategories of $\Comodf  L $, respectively $\Comodf{L'}$, consisting of objects  annihilated by $\pp$. Therefore it is enough to give a proof in the case where  $R$ is an integral domain. 

We use the induction of the dimension of $R$; when $\dim R=0$, this is already well-documented in the literature, see the proof of \cite[Prop.~2.21]{DM82} as well as \cite[Lemma 1.2 and Remark 1.3]{Hai16}. Now, if $\g p\not=0$ is a prime, then the induction hypothesis allows us to say that $\varphi\otimes R/\g p$ is pure and in particular injective. We then only need to show that   $\varphi\oti_RK$ is injective,  where $K$ is the quotient field of $R$.  For that, we shall show that 
\[ (\varphi\oti K)_*:\Comodf{ L\otimes K }\to\Comodf{ L'\otimes K }\] is full and stable under subobjects.

Thus, let $\rho:V\to V\otimes_K(L\otimes K)$ define a comodule for $
L\otimes  K$ and write $\rho':V\to V\otimes_K(L'\oti K)$ for the coaction of $L'\otimes K$ induced by $\varphi$. 
Let   $W\subset V$ be a subspace which is an $L'\otimes K$-subcomodule of $V$: we have $\rho'(W)\subset W\oti_K(L'\oti K)$.    

Let $s\le r$ be the dimensions of $W$ and $V$ over $K$, and let $\{v_i\}_{i=1}^r$ be a basis of $V$ such that $\{v_i\}_{i=1}^s$ is a basis of $W$. 
According to Lemma \ref{lem_serre} below, there exists a finite $R$-submodule   $\mathcal V$ of (the $R$-module) $V$ containing $\{v_i\}_{i=1}^r$, and such that 
\[\begin{split}
\rho(\mathcal V)&\subset \mathcal V\oti_RL\\
&\subset  V\oti_RL
\\&=V\otimes_K(L\oti K).
\end{split}\]
Put differently, $\rho$ defines a structure of $L$-comodule on $\mathcal V$.

 Let $\mathcal W:=\mathcal V\cap W$. 
 Then, because of \cite[Theorem 7.4(i)]{Mat89} and flatneess of $L'$ over $R$, we conclude that   $\rho'(\mathcal W)$, which is a subset of $W\otimes_K(L'\otimes K)=W\oti_RL'$, is contained in  $\mathcal W\oti_RL'
$; we conclude that $\mathcal W$ is an $L'$-subcomodule of $\mathcal V$. The assumption on the functor $\varphi_*$ now assures that  $\mathcal W$ is 
an $L$-subcomodule of $\mathcal V$, i.e. $\mathcal W$ is
sent by $\rho:\mathcal V\to \mathcal V\oti L$ into $\mathcal W\oti L$. We can therefore say that $\rho'(W)\subset W\otimes_K(L\oti K)$, which proves that the functor   $(\varphi\oti K)_*$ preserves subobjects. 
 
Let us now verify that $(\varphi\oti K)_*$
is fully faithful. Now, faithfulness is obvious and so let us give ourselves 
$V$ and $W$ in $\Comodf{L\oti K}$ and $h:V\to W$ a map of $L'\oti K$-comodules. 
Because of Lemma \ref{lem_serre},  
we may assume  $V=\mathcal V\oti_RK$
 and $W=\mathcal W\oti_RK$ for certain 
$\mathcal V$ and $\mathcal W$ in $\Comodf{L}$. Clearing denominators, there exists $d\in R$  such that $dh(\mathcal V)\subset \mathcal W$. Little effort is required to verify that $dh$ defines a morphism of $L'$-comodules, and hence is also a morphism of $L$-comodules by our assumption that $\varphi_*:\Comodf L\to \Comodf{L'}$ is full. It then follows that $(\varphi\oti K)_*$ is full. 
\end{proof}

\begin{lem}\label{lem_serre}
Let $R$ be a noetherian domain with quotient field $K$, and $L$ be an $R$-flat coalgebra. Then any $L\oti K$-comodule of finite dimension has a model over $R$, i.e. an $R$-module $\mathcal V$ of finite type affording a coaction of $L$ such that $\mathcal V\oti_RK\simeq V$ as $L\oti K$-comodules.
\end{lem}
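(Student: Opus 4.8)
The plan is to observe that the finite-dimensional $L\oti K$-comodule $V$ is, tautologically, already a comodule over $L$ itself, and then to extract the sought model by a Serre-type local-finiteness argument. To begin, since $V$ is a $K$-vector space there is a canonical identification
\[
V\oti_K(L\oti_R K)=V\oti_R L,
\]
so that the coaction $\rho$ of the $L\oti K$-comodule $V$ may be regarded as an $R$-linear map $\rho:V\to V\oti_R L$. Reading the coassociativity and counit axioms through the corresponding identification $V\oti_R L\oti_R L=V\oti_K(L\oti K)\oti_K(L\oti K)$, one sees that $(V,\rho)$ is an $L$-comodule; its only defect is that the underlying $R$-module $V\cong K^{\oplus r}$ fails to be of finite type.

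With this reformulation the lemma becomes a pure finiteness assertion. I would invoke the relative form of the fundamental theorem of comodules: since $L$ is flat over the noetherian ring $R$, every $L$-comodule is the directed union of its subcomodules that are finitely generated as $R$-modules. Granting this, fix a $K$-basis $v_1,\dots,v_r$ of $V$ and choose such a subcomodule $\mathcal V\subseteq V$ containing $v_1,\dots,v_r$. Then $\mathcal V$ is of finite type over $R$, the restriction of $\rho$ equips it with a coaction of $L$, and the presence of a $K$-basis forces $\mathcal V\oti_R K=V$; since base change along $R\to K$ carries the $L$-coaction on $\mathcal V$ to the $L\oti K$-coaction on $V$, this is an isomorphism of $L\oti K$-comodules. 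Thus $\mathcal V\in\Comodf L$ is the required model, and the proof is complete once the finiteness input is secured.

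The main obstacle is precisely this local finiteness, and it is here that the flatness of $L$ is indispensable. As usual it reduces to the case of the regular comodule, i.e.\ to the fundamental theorem of coalgebras over $R$: every finite subset of $L$ lies in a subcoalgebra that is finite as an $R$-module. I would prove this by adapting Sweedler's classical argument. For $\ell\in L$ write $\Delta(\ell)=\sum_k a_k\oti b_k$; because $L$ is flat over the domain $R$ the maps $L\hookrightarrow L\oti K$ and $L\oti_R L\hookrightarrow(L\oti K)\oti_K(L\oti K)$ are injective, so after clearing denominators one may take the $b_k$ to be $K$-linearly independent in $L\oti K$. Applying suitable $K$-functionals to the coassociativity relation $\sum_k\Delta(a_k)\oti b_k=\sum_k a_k\oti\Delta(b_k)$ confines the first tensor slot of each $\Delta(a_k)$ to the finite family $\{a_j\}$, which is the mechanism that closes up the generated subcomodule. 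The genuinely delicate point — and the reason a naïve lattice does not suffice — is that over $R$ (as opposed to over a field) this closure competes with finite generation: closing under $\Delta$ wants to saturate, while finiteness wants to stay within a fixed finitely generated module. Reconciling the two, by controlling denominators and by comparing the tensor products $L\oti_R L$ and $(L\oti K)\oti_K(L\oti K)$, is exactly where the flatness of $L$ and the noetherianity of $R$ are used, and I expect this to be the technical heart of the argument.
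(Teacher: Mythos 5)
Your reduction coincides exactly with the paper's own proof: identify $V\oti_K(L\oti K)$ with $V\oti_R L$ so that $V$ becomes an $L$-comodule, invoke local finiteness of comodules over the $R$-flat coalgebra $L$ to find a subcomodule $\mathcal V$ of finite type over $R$ containing a $K$-basis, and check that $\mathcal V\oti_RK\simeq V$ compatibly with the coactions. All of that is correct. But the paper finishes by \emph{citing} precisely this local-finiteness statement, namely Serre's result \cite[Prop.~2, p.~40]{Se68}, whereas you attempt to prove it and do not: the assertion that every $L$-comodule is the directed union of its finite-type subcomodules is the entire mathematical content here, and your write-up explicitly defers it as ``the technical heart'' which you ``expect'' to work. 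That deferred step is a genuine gap, not a routine verification.

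The sketch you give would not close it. Sweedler's argument with $K$-functionals proves local finiteness for the coalgebra $L\oti K$ over the \emph{field} $K$ --- a standard and easy fact --- but it produces a finite-dimensional $K$-subspace of $L\oti K$, and converting that into a finite-type $R$-submodule of $L$ stable under $\Delta$ (or under the coaction) is exactly the original problem again: intersecting with $L$ can yield non-finitely-generated $R$-modules (for instance $L$ may contain copies of $K$ as an $R$-module), and functionals give no control over the denominators created. Note also that what is needed is finite-type sub\emph{comodules} (coideals), not subcoalgebras, so the ``fundamental theorem of coalgebras'' is not the right target, and the ``usual'' reduction to the regular comodule requires the underlying module to be $R$-flat (harmless for $V$, but not for the general statement you invoke). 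The argument that actually works --- Serre's --- is of a different nature: for $x=\rho(m)\in M\oti_RL$, flatness of $L$ and noetherianity of $R$ yield a \emph{smallest} finite-type submodule $N\subseteq M$ with $x\in N\oti_RL$ (flatness gives $(N_1\oti_RL)\cap(N_2\oti_RL)=(N_1\cap N_2)\oti_RL$ inside $M\oti_RL$, and noetherianity of a finite-type module gives a minimal such $N$); then coassociativity, the counit, and the equational criterion for the vanishing of a tensor show that this minimal $N$ is already a subcomodule. So either cite \cite[Prop.~2, p.~40]{Se68}, as the paper does, or carry out this coefficient-module argument; the route through $K$-functionals and denominator-chasing does not deliver the statement over $R$.
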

\begin{proof}
This   is a consequence of  \cite[Prop.~2, page 40]{Se68}, as we now exlpain. Let $\{v_i\}$ be a $K$-basis of $V$; since $V\oti_K(L\oti K)=V\oti_RL$, we may  consider $V$ as an $L$-comodule. Then there exists a 
subcomodule $\mathcal V$ of $V$, finite over $R$ and containing $\{v_i\}$. This is the required  model.  
\end{proof}

\begin{rmk}
Proposition \ref{flat_criterion_coalgebra_hom} is an enhancement of the main result of \cite[Prop.~2.7]{Hai16}. In the latter proposition, the whole category of comodules is used, not just those of finite type over $R$. This provides an evidence that the might exist a Tannakian formalism over any noetherian ring. 
\end{rmk}

\section*{Acknowledgments}The first named author (PHH) was supported by the Vietnam Academy of Science and Technology under grant number CTTH00.02/23-24. He was also  funded by Vingroup Joint Stock Company and  supported by Vingroup Innovation Foundation (VinIF) under the project code VINIF.2021.DA00030. HDN was partially supported by the Vietnam Academy of Science and Technology (through the grant NCXS02.01/22-23).

\end{document}